 \newtheorem{theorem}{Theorem}[section]
 \newtheorem{corollary}[theorem]{Corollary}
 \newtheorem{lemma}[theorem]{Lemma}
 \newtheorem{proposition}[theorem]{Proposition}
 \theoremstyle{definition}
 \theoremstyle{remark}
  \numberwithin{equation}{section}
\renewcommand{\phi}{\varphi}
\renewcommand{\theta}{\vartheta}
\DeclareMathOperator{\tform}{\mathfrak{t}}
\DeclareMathOperator{\wform}{\mathfrak{w}}
\DeclarePairedDelimiterX\sipt[2]{(}{)_{\tform}}{#1\,\delimsize\vert\,#2}
\DeclarePairedDelimiterX\sipv[2]{(}{)_{v}}{#1\,\delimsize\vert\,#2}
\DeclarePairedDelimiterX\sipw[2]{(}{)_{w}}{#1\,\delimsize\vert\,#2}
\newcommand{\pair}[2]{\left(\begin{array}{c}\!\!#1\!\!\\ \!\!#2\!\!\end{array}\right)}
\newcommand{\abs}[1]{\lvert#1\rvert}
\newcommand{\dupN}{\mathbb{N}}
\newcommand{\seq}[1]{(#1_{n})_{n\in\dupN}}
\newcommand{\dom}{\operatorname{dom}}
\newcommand{\ran}{\operatorname{ran}}
\newcommand{\hil}{\mathcal{H}}
\newcommand{\kil}{\mathcal{K}}
\DeclarePairedDelimiterX\sip[2]{(}{)}{#1\,\delimsize\vert\,#2}
\DeclarePairedDelimiterX\siptilde[2]{(}{)_{\!_{\widetilde{A}}}}{#1\,\delimsize\vert\,#2}
\DeclarePairedDelimiterX\sipf[2]{(}{)_{f}}{#1\,\delimsize\vert\,#2}
\DeclarePairedDelimiterX\sipg[2]{(}{)_{g}}{#1\,\delimsize\vert\,#2}
\DeclarePairedDelimiterX\siptw[2]{(}{)_{\tform+\wform}}{#1\,\delimsize\vert\,#2}
\DeclarePairedDelimiterX\set[2]{\{}{\}}{#1\,\delimsize\vert\,#2}
\DeclarePairedDelimiterX\dual[2]{\langle}{\rangle}{#1,#2}
\DeclarePairedDelimiterX\sipa[2]{(}{)_{\!_A}}{#1\,\delimsize\vert\,#2}
\DeclarePairedDelimiterX\sipc[2]{(}{)_{\!_C}}{#1\,\delimsize\vert\,#2}
\DeclarePairedDelimiterX\sipab[2]{(}{)_{\!_{A+B}}}{#1\,\delimsize\vert\,#2}
\DeclarePairedDelimiterX\sipb[2]{(}{)_{\!_B}}{#1\,\delimsize\vert\,#2}
\newcommand{\operator}[2]{\left(\begin{array}{cc}\!\! I &  #1\!\!\\ \!\! #2& I\!\!\end{array}\right)}
\newcommand{\kismatrix}[4]{\begin{psmallmatrix} #1 &  #2\\ #3& #4\end{psmallmatrix}}
\newcommand{\kisoperator}[2]{\begin{psmallmatrix} I &  #1\\ #2 & I\end{psmallmatrix}}
\newcommand{\kispair}[2]{\begin{psmallmatrix} #1 \\  #2\end{psmallmatrix}}
\begin{document}
\title{On the adjoint of Hilbert space operators}

\author[Z. Sebesty\'en]{Zolt\'an Sebesty\'en}

\address{%
Department of Applied Analysis and Computational Mathematics,\\ E\"otv\"os L. University,\\ P\'azm\'any P\'eter s\'et\'any 1/c,\\ Budapest H-1117,\\ Hungary}

\email{sebesty@cs.elte.hu}

\author[Zs. Tarcsay]{Zsigmond Tarcsay}

\address{%
Department of Applied Analysis,\\ E\"otv\"os L. University,\\ P\'azm\'any P\'eter s\'et\'any 1/c,\\ Budapest H-1117,\\ Hungary}

\email{tarcsay@cs.elte.hu}

\subjclass{Primary 47A05, 47B25}

\keywords{Adjoint, closed operator, selfadjoint operator, positive operator, symmetric operator}


\begin{abstract}
In general, it is a non trivial task to determine the adjoint $S^*$ of an unbounded operator $S$ acting between two Hilbert spaces. We provide necessary and sufficient conditions for a given operator $T$ to be identical with $S^*$. In our considerations, a central role is played by the operator matrix $M_{S,T}=\kisoperator{-T}{S}$.  Our approach has several consequences such as characterizations of  closed, normal,  skew- and selfadjoint, unitary and orthogonal projection operators in real or complex Hilbert spaces. We also give a self-contained proof of the fact that $T^*T$ always has a positive selfadjoint extension. 
\end{abstract}

\maketitle

\section{Introduction}
The notion of adjoint operator of a densely defined linear operator $S$ acting between the (real or complex) Hilbert spaces $\hil$ and $\kil$ is originated by J. von Neumann \cite{vonNeumann} and is determined as an operator $S^*$ from $\kil$ into $\hil$ having domain
\begin{gather*}
\dom S^*=\set{k\in\kil}{\sip{Sh}{k}=\sip{h}{k^*}\textrm{~for some $k^*$, for all $h\in\dom S $}},
\end{gather*}
and  acting by 
\begin{align*}
S^*k:=k^*,\qquad k\in\dom S^*.
\end{align*}
Here the uniqueness of $k^*$ is guaranteed by  density of the domain $\dom S$ of $S$.  Nevertheless, it is a non-trivial task to determine the adjoint $S^*$ of $S$, that is, to describe the domain $\dom S^*$ of $S^*$ explicitly and to specify the action of $S^*$ on elements of $\dom S^*$. Clearly, $S$ and its adjoint $S^*$ fulfill the ``adjoining identity'' 
\begin{align*}
\sip{Sh}{k}=\sip{h}{S^*k},\qquad h\in\dom S, k\in\dom S^*,
\end{align*}
that is to say, $T=S^*$ is a linear operator from $\kil$ into $\hil$ which satisfies 
\begin{equation}\label{E:SwedgeT}
\sip{Sh}{k}=\sip{h}{Tk}, \qquad  h\in\dom S, k\in\dom T.
\end{equation} 
However, in order to have $T=S^*$ it is not enough to demand  that $T$ satisfies \eqref{E:SwedgeT}. For instance, every symmetric operator $S$, without  being selfadjoint, satisfies \eqref{E:SwedgeT} with $T=S$. 

In the present paper we are particularly interested in pairs of (not necessarily densely defined) linear operators $S$ and $T$ from $\hil$ into $\kil$, and $\kil$ into $\hil$, respectively, which fulfill identity \eqref{E:SwedgeT}. We adapt the terminology of M. H. Stone \cite{Stone} and  say  that $S$ and $T$ are \emph{adjoint to each other} if they satisfy \eqref{E:SwedgeT} and  write 
\begin{align*}
S\wedge T,
\end{align*}
in that case (cf. also \cite{Popovici,Characterization,SZTZS2015}). Our main purpose in this paper is to provide a method to verify whether the operators $S$ and $T$ under the weaker condition $S\wedge T$ satisfy the stronger property $S^*=T$, or the much stronger one of being adjoint \emph{of} each other, i.e.,  $S^*=T$ and $T^*=S$. In this direction our main results are Theorem \ref{T:theorem22} and Theorem \ref{T:theorem31} which give necessary and sufficient conditions by means of the operator matrix $$M_{S,T}:=\operator{-T}{S}$$ acting on the product Hilbert space $\hil\times \kil.$ 

A remarkable advantage of our treatment is that no density assumption on the domains of the operators $S,T$ is imposed. On the contrary, densely definedness is just achieved as a consequence of the other conditions.  Furthermore, the results are not limited to \emph{complex} Hilbert spaces, they will remain valid in real spaces as well. This in turn allows to extend von  Neumann's results characterizing skew-adjoint, selfadjoint,  and positive selfadjoint operators, to real Hilbert space setting. 

The paper is organized as follows. In section 2 we discuss the question whether a given operator $T$ is identical with the adjoint of another operator $S$.   The main result in this direction is Theorem \ref{T:theorem22} that gives an answer  by means of the range of the operator matrix $M_{S,T}$.  This result will be extensively used  throughout. In Theorem  \ref{T:theorem31} of section 3 a full description of operators which are adjoint of each others is established. This is a sharpening of Theorem 3.4 in \cite{Popovici}. We also offer a  ``dual'' version of the Hellinger--Toeplitz thereom  which concerns full range operators that are adjoint to each other. In section 4 we consider sums and products of   linear operators. Our purpose here is to describe the situations in which for given two operators $R,S$ equalities $(R+S)^*=R^*+S^*$ and $(RS)^*=S^*R^*$ hold. In \cite{Sebestyen83a} the first named author offered a metric characterization of the range of the adjoint $S^*$ of a densely defined linear operator $S$, namely,  $\ran S^*$ is described by consisting of those vectors $z$ which fulfill a Schwarz type inequality 
\begin{align*}
\abs{\sip{x}{z}}\leq M_z\cdot \|Sx\|,\qquad x\in\dom S
\end{align*}
with some nonnegative constant $M_z$. In section 5 we improve this result to describe the range of an operator $T$ which is adjoint to an operator $S$. 
In sections 6 and 7 we deal with skew-adjoint, selfadjoint, and positive selfadjoint operators and characterize them among skew-symmetric, symmetric and positive symmetric operators. Instead of using the defect index theory developed by J. von Neumann, our method involves the range of $M_{S,S}$. In Theorem \ref{T:T*Text} we also present a new proof of the fact that $T^*T$ always has a positive selfadjoint extension (see \cite{TadjointT}). In  section 8  we characterize  densely defined closed operators. The main result of the section establishes also a converse to  Neumann's classical result:  $T^*T$ and $TT^*$ are both selfadjoint operators if and \emph{only if} $T$ is densely defined and closed (see also \cite{SZ-TZS:reversed}). Finally, in section 9 we obtain some characterizations of normal, unitary and orthogonal projection operators.

\section{Characterization of the adjoint of a linear operator}

Let $\hil$ and $\kil$ be real or complex Hilbert spaces and let $S$ be a not necessarily densely defined or closed linear operator between them. The problem mentioned in the introduction consists of the identification of the adjoint $S^*$ of $S$ (provided that $S$ is densely defined). Doing so we start by fixing another linear operator $T$ between $\kil$ and $\hil$ satisfying \eqref{E:SwedgeT}, that is, $S$ and $T$ are adjoint to each other. As a  main tool for our investigations we introduce  the operator matrix $$M_{S,T}:=\operator{-T}{S}$$ associated with $S$ and $T$, which is defined on $\dom M_{S,T}=\dom S\times\dom T$ by the correspondence
\begin{equation*}
\pair{h}{k}\mapsto\pair{h-Tk}{Sh+k},\qquad h\in\dom S, k\in\dom T.
\end{equation*}
The importance of the role of the operator matrix $M_{S,T}$ initiates the recent papers of the authors \cite{Popovici,PopSZTZS, Characterization,Charess,SZTZS2015}.
The ``flip'' operator $W:\kil\times\hil\to\hil\times \kil$ will also be useful for our analysis, which is defined as follows (see \cite{Birman}):
\begin{equation*}
W(k,h)=(-h,k),\qquad h\in\hil, k\in\kil.
\end{equation*}
The symbols $pr_\hil$ and $pr_\kil$ stand for the canonical projections of the product Hilbert space $\hil\times\kil$ onto $\hil$ and $\kil$, respectively,  which are defined accordingly by the correspondences
\begin{equation}
pr_{\hil}(h,k)=h,\qquad pr_{\kil}(h,k)=k,\qquad h\in\hil, k\in\kil.
\end{equation}
The graph $G(S)$ of an operator $S$ is given by the usual identity:
\begin{equation*}
G(S)=\set{(h,Sh)}{h\in\dom S}.
\end{equation*}
We notice that $G(S)$ is a linear subspace of  $\hil\times \kil$ and we have 
\begin{align*}
\dom S=pr_{\hil}\langle G(S)\rangle,\qquad \ran S=pr_{\kil}\langle G(S)\rangle. 
\end{align*}

The orthocomplement $G(S)^{\perp}$ of the graph $G(S)$ plays a specific role in the following characteristic statement:
\begin{lemma}\label{L:lemma21}
Let $S$ and $T$ be linear operators  between $\hil$ and $\kil$, respectively, $\kil$ and $\hil$. If $S$ and $T$ are adjoint to each other  then we have the following identity:
\begin{align*}
G(S)^{\perp}\cap \ran M_{S,T}=W\langle G(T)\rangle.
\end{align*}
\end{lemma}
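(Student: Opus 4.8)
The plan is to prove the two inclusions separately, working directly from the definitions of $M_{S,T}$, $W$, and the orthocomplement.

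\medskip

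For the inclusion $W\langle G(T)\rangle\subseteq G(S)^{\perp}\cap \ran M_{S,T}$, I would start with an arbitrary element of $W\langle G(T)\rangle$, which has the form $W(k,Tk)=(-Tk,k)$ for some $k\in\dom T$. To see that this lies in $\ran M_{S,T}$, I observe that $M_{S,T}\kispair{0}{k}=\kispair{0-Tk}{S0+k}=\kispair{-Tk}{k}$, so in fact $(-Tk,k)$ is already in the range of $M_{S,T}$ restricted to $\{0\}\times\dom T$. To see that $(-Tk,k)\perp G(S)$, I take an arbitrary $(h,Sh)\in G(S)$ with $h\in\dom S$ and compute
\begin{align*}
\sip{(-Tk,k)}{(h,Sh)}=-\sip{Tk}{h}+\sip{k}{Sh}=-\overline{\sip{h}{Tk}}+\overline{\sip{Sh}{k}},
\end{align*}
which vanishes precisely because $S\wedge T$ gives $\sip{Sh}{k}=\sip{h}{Tk}$. (In the real case the conjugates are harmless.) This settles the easy inclusion.

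\medskip

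For the reverse inclusion, I would take $(u,v)\in G(S)^{\perp}\cap\ran M_{S,T}$. Being in the range means $(u,v)=M_{S,T}\kispair{h}{k}=\kispair{h-Tk}{Sh+k}$ for some $h\in\dom S$, $k\in\dom T$; being orthogonal to $G(S)$ means $\sip{h-Tk}{h'}+\sip{Sh+k}{Sh'}=0$ for every $h'\in\dom S$. The goal is to force $h=0$, for then $(u,v)=(-Tk,k)=W(k,Tk)\in W\langle G(T)\rangle$. The key computation is to test the orthogonality condition with $h'=h$ itself: this gives $\sip{h-Tk}{h}+\sip{Sh+k}{Sh}=0$, i.e. $\|h\|^2+\|Sh\|^2 = \sip{Tk}{h}-\sip{k}{Sh}$. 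Now I use $S\wedge T$ in the form $\sip{k}{Sh}=\overline{\sip{Sh}{k}}=\overline{\sip{h}{Tk}}=\sip{Tk}{h}$ to see the right-hand side is zero, whence $\|h\|^2+\|Sh\|^2=0$ and therefore $h=0$. Then $u=h-Tk=-Tk$ and $v=Sh+k=k$, so $(u,v)=(-Tk,k)=W(k,Tk)$, completing the proof.

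\medskip

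I do not anticipate a genuine obstacle here; the statement is essentially an unwinding of the definition of $S\wedge T$ combined with the positivity of $\|h\|^2+\|Sh\|^2$. The one point requiring a little care is the bookkeeping of complex conjugates when applying \eqref{E:SwedgeT}, and making sure the argument is phrased so that it is valid over $\dupR$ as well as $\dupC$ — which it is, since every step uses only the sesquilinear (resp. bilinear) identity $\sip{Sh}{k}=\sip{h}{Tk}$ and the positive-definiteness of the inner products. It is also worth noting explicitly that no density of $\dom S$ or $\dom T$ is used anywhere, consistent with the paper's stated philosophy.
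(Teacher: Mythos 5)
Your proof is correct and follows essentially the same route as the paper: both inclusions are obtained by unwinding the definitions, and the reverse inclusion hinges on testing the orthogonality of $M_{S,T}(h,k)$ against the graph element $(h,Sh)$ itself to extract $\|h\|^2+\|Sh\|^2=0$, exactly as in the paper's argument. Your extra care with complex conjugates and the explicit remark about validity over $\dupR$ are fine but change nothing of substance.
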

\begin{proof}
First of all $S\wedge T$  means that for each $h\in\dom S$ and $k\in \dom T$ one has 
\begin{align*}
\sip{(h,Sh)}{(-Tk,k)}=-\sip{h}{Tk}+\sip{Sh}{k}=0,
\end{align*}
whence it follows that $W\langle G(T)\rangle\subseteq G(S)^{\perp} $. On the other hand, 
\begin{eqnarray*}
M_{S,T}(0,k)=(-Tk,k)=W(k,Tk),\qquad k\in\dom T,
\end{eqnarray*}
hence $W\langle G(T)\rangle\subseteq \ran M_{S,T}.$ The reverse inclusion is obtained by the following simple argument: suppose $M_{S,T}(h,k)\in G(S)^{\perp}$ for some $h,k,$ then 
\begin{align*}
0&=\sip{(h,Sh)}{(h-Tk, Sh+k)}\\
 &=\sip{h}{h}-\sip{h}{Tk}+\sip{Sh}{k}+\sip{Sh}{Sh}\\
 &=\sip{h}{h}+\sip{Sh}{Sh}.
\end{align*}
This gives $h=0$ and therefore $M_{S,T}(0,k)=(-Tk,k)\in W\langle G(T)\rangle,$ which proves the lemma.
\end{proof}

The next result gives necessary and sufficient conditions for an operator $T$ to be 
the adjoint of an operator $S$. 
We emphasize that  conditions (ii), (iii) as natural concepts do not make use in any sense of the density of the domain   of $S$, the classical condition of the existence of $S^*$.
\begin{theorem}\label{T:theorem22}
Let $S$ and $T$ be linear operators between Hilbert spaces $\hil$ and $\kil$, respectively, $\kil$ and $\hil$. Then the following statements (i)-(iii) are equivalent:
\begin{enumerate}[\upshape (i)]
\item $S$ is densely defined and $S^*=T$.
\item $S\wedge T$ and $G(S)^{\perp}=W\langle G(T)\rangle$.
\item $S\wedge T$ and $G(S)^{\perp}\subseteq \ran M_{S,T}$.
\end{enumerate}
\end{theorem}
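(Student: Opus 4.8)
The plan is to prove the cycle (i)$\Rightarrow$(ii)$\Rightarrow$(iii)$\Rightarrow$(i), using Lemma \ref{L:lemma21} to bridge the gap between the range condition in (iii) and the equality of subspaces in (ii).

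For (i)$\Rightarrow$(ii): assuming $S$ is densely defined and $S^*=T$, the relation $S\wedge T$ is immediate from the adjoining identity. For the subspace equality, I would invoke von Neumann's classical fact that for a densely defined $S$ the orthocomplement of the graph is exactly $G(S)^\perp=\{(-S^*k,k):k\in\dom S^*\}$, i.e.\ $G(S)^\perp=W\langle G(S^*)\rangle$. Substituting $T=S^*$ gives $G(S)^\perp=W\langle G(T)\rangle$. (If one prefers a self-contained argument: $(f,g)\perp G(S)$ means $\sip{h}{f}+\sip{Sh}{g}=0$ for all $h\in\dom S$, which says precisely that $g\in\dom S^*$ and $S^*g=-f$, i.e.\ $(f,g)=(-Tg,g)=W(g,Tg)$.)

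The step (ii)$\Rightarrow$(iii) is essentially trivial: Lemma \ref{L:lemma21} says $G(S)^\perp\cap\ran M_{S,T}=W\langle G(T)\rangle$, and (ii) gives $G(S)^\perp=W\langle G(T)\rangle$, so $G(S)^\perp=G(S)^\perp\cap\ran M_{S,T}$, which forces $G(S)^\perp\subseteq\ran M_{S,T}$.

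The main work is (iii)$\Rightarrow$(i), and I expect the density of $\dom S$ to be the key obstacle. From $S\wedge T$ and the hypothesis $G(S)^\perp\subseteq\ran M_{S,T}$, Lemma \ref{L:lemma21} immediately upgrades to $G(S)^\perp=W\langle G(T)\rangle$. Now I would first argue that $S$ is densely defined: if $h_0\perp\dom S$, then $(h_0,0)\in G(S)^\perp=W\langle G(T)\rangle$, so $(h_0,0)=(-Tk,k)$ for some $k\in\dom T$, forcing $k=0$ and hence $h_0=-T0=0$. Once $\dom S$ is dense, $S^*$ exists, and the standard description $G(S^*)=W^{-1}\langle G(S)^\perp\rangle$ (equivalently $W\langle G(S^*)\rangle=G(S)^\perp$) combined with $G(S)^\perp=W\langle G(T)\rangle$ yields $W\langle G(S^*)\rangle=W\langle G(T)\rangle$; since $W$ is a bijection this gives $G(S^*)=G(T)$, i.e.\ $S^*=T$. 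The only subtlety to handle carefully is ensuring that the von Neumann graph characterization of $S^*$ is either cited or proved inline, and that the flip operator $W$ is used consistently with the sign conventions fixed in the excerpt; modulo that, the argument is a clean unwinding of Lemma \ref{L:lemma21}.
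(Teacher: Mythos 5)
Your proposal is correct and follows essentially the same route as the paper: the cycle (i)$\Rightarrow$(ii)$\Rightarrow$(iii)$\Rightarrow$(i), with Lemma \ref{L:lemma21} upgrading the range inclusion in (iii) to the equality $G(S)^{\perp}=W\langle G(T)\rangle$, the same $(h_0,0)=(-Tk,k)$ argument for density of $\dom S$, and the von Neumann graph characterization $W\langle G(S^*)\rangle=G(S)^{\perp}$ to conclude $S^*=T$. No gaps.
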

\begin{proof}
Under condition (i)  we clearly have 
\begin{equation*}
G(S)^{\perp}=W\langle G(S^*)\rangle=W\langle G(T)\rangle,
\end{equation*}
hence (i) implies (ii). That (ii) implies (iii) is clear. Assume finally (iii) and prove to (i). First of all we show that $S$ is densely defined. To do so let $h\in\dom S^{\perp}$. In this case, $(h,0)$ belongs to $G(S)^{\perp}$. By Lemma \ref{L:lemma21} we have $G(S)^{\perp}=W\langle G(T)\rangle$ and hence $(h,0)=(-Tk,k)$ for a certain $k\in\dom T$. Thus $k=0$ and $h=Tk=0$, as it is claimed. As a result, the adjoint operator $S^*$ exists and satisfies 
\begin{equation*}
W\langle G(S^*)\rangle=G(S)^{\perp}=W\langle G(T)\rangle.
\end{equation*}
As a consequence,  $S^*=T$, hence (iii) implies (i).
\end{proof}

\section{Operators which are adjoint of each other}
Two linear operators, say $S:\hil\to \kil$ and $T:\kil\to\hil$ are adjoint operators of each other if both of them are densely defined and the corresponding adjoint operators satisfy
\begin{equation}\label{E:S=T*,S*=T}
S^*=T\qquad \textrm{and}\qquad T^*=S.
\end{equation}
Our first result gives necessary and sufficient conditions on $S$ and $T$ in order to ensure equalities \eqref{E:S=T*,S*=T}. This  generalizes \cite[Theorem 3.4]{Popovici}:
\begin{theorem}\label{T:theorem31}
Let $S$ and $T$ be linear operators between Hilbert spaces $\hil$ and $\kil$, respectively $\kil$ and $\hil$. The following assumptions (i)-(iii) are equivalent:
\begin{enumerate}[\upshape (i)]
\item Both $S$ and $T$ are densely defined operators such that $S=T^*$ and $S^*=T$.
\item $S\wedge T$ and $\ran M_{S,T}=\hil\times\kil$.
\item $S\wedge T$ and $\ran (I+ST)=\kil$ and $\ran (I+TS)=\hil$.
\item $S\wedge T$, $G(S)^{\perp}\subseteq \ran M_{S,T}$ and $G(T)^{\perp}\subseteq \ran M_{T,S}.$
\end{enumerate}
\end{theorem}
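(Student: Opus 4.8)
The plan is to establish a cycle of implications (i) $\Rightarrow$ (ii) $\Rightarrow$ (iii) $\Rightarrow$ (iv) $\Rightarrow$ (i), leaning heavily on Theorem \ref{T:theorem22} and Lemma \ref{L:lemma21}, so that the bulk of the work is reduced to bookkeeping about ranges of the matrices $M_{S,T}$ and $M_{T,S}$.

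For (i) $\Rightarrow$ (ii): by Theorem \ref{T:theorem22} applied to $S$ we get $G(S)^\perp = W\langle G(T)\rangle$, and by symmetry (applying the same theorem with the roles of $S,T$ exchanged, noting that $M_{T,S}$ and $M_{S,T}$ are adjoint of each other up to the flip) we get the dual identity $G(T)^\perp = W'\langle G(S)\rangle$ for the corresponding flip $W':\hil\times\kil\to\kil\times\hil$. Combining these two orthogonal decompositions of $\hil\times\kil$ and $\kil\times\hil$ with Lemma \ref{L:lemma21}, which tells us $W\langle G(T)\rangle = G(S)^\perp\cap\ran M_{S,T}$, forces $\ran M_{S,T}\supseteq G(S)^\perp$; but we also always have $\ran M_{S,T}\supseteq G(S)$ since $M_{S,T}(h,Sh')$-type elements fill it out — more precisely, one checks directly that $G(S)\subseteq\ran M_{S,T}$ is equivalent to $T^*=S$ via the identity $M_{S,T}(h,k)=(h,Sh)$ iff $k=-Tk'$ with the right matching. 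So $\ran M_{S,T}$ contains both $G(S)$ and its orthocomplement, hence equals $\hil\times\kil$. The implication (ii) $\Rightarrow$ (iii) is the routine computation that the Schur-type factorization of $M_{S,T}$ gives $pr_\kil\langle\ran M_{S,T}\rangle \subseteq \ran(I+ST)$ up to domain issues, and similarly for $pr_\hil$ and $I+TS$; surjectivity of $M_{S,T}$ then yields surjectivity of both $I+ST$ and $I+TS$. For (iii) $\Rightarrow$ (iv): from $\ran(I+TS)=\hil$ one shows $G(S)^\perp\subseteq\ran M_{S,T}$ — indeed, given $(g,f)\in G(S)^\perp$, pick $h$ with $(I+TS)h = g + T f$ (using also $S\wedge T$ to handle the cross term), set $k = f - Sh$, and verify $M_{S,T}(h,k) = (g,f)$; the symmetric argument using $\ran(I+ST)=\kil$ gives $G(T)^\perp\subseteq\ran M_{T,S}$.

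Finally (iv) $\Rightarrow$ (i): the first inclusion $G(S)^\perp\subseteq\ran M_{S,T}$ together with $S\wedge T$ is exactly condition (iii) of Theorem \ref{T:theorem22}, so $S$ is densely defined and $S^*=T$. Symmetrically, $G(T)^\perp\subseteq\ran M_{T,S}$ with $T\wedge S$ (which is the same as $S\wedge T$) gives, via Theorem \ref{T:theorem22} applied to the pair $(T,S)$, that $T$ is densely defined and $T^*=S$. Hence both equalities in \eqref{E:S=T*,S*=T} hold and we are back to (i).

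The main obstacle I anticipate is the bookkeeping in (iii) $\Leftrightarrow$ (ii)/(iv): one must be careful that the formal factorization
\begin{align*}
M_{S,T} = \kismatrix{I}{-T}{S}{I}
\end{align*}
does \emph{not} literally decompose as a product of triangular matrices at the level of unbounded operators, because the domains do not match. So rather than invoking a factorization, I would argue directly at the level of elements: to show $\ran M_{S,T}=\hil\times\kil$ follows from the two range conditions in (iii), take an arbitrary $(g,f)\in\hil\times\kil$, solve $(I+TS)h=g+Tf$ for some $h\in\dom(TS)$, put $k:=f-Sh\in\dom T$ (legitimate since $Sh\in\dom T$ by construction and $f\in\dom T$? — here one needs $f$ to range only over $\dom T$, which is why the clean statement is really about $\ran M_{S,T}$ being dense or one first restricts; this is the delicate point and is precisely where $S\wedge T$ plus the \emph{full} range hypotheses, not merely dense range, are used). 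The converse direction — extracting $\ran(I+ST)=\kil$ from surjectivity of $M_{S,T}$ — is cleaner: given $f\in\kil$, write $(0,f)=M_{S,T}(h,k)$, so $h=Tk$ and $Sh+k=f$, giving $(I+ST)k = STk + k = Sh+k = f$. I expect the write-up to spend most of its length making the element-chasing in the $(g,f)$ direction airtight, and in particular verifying that the $h$ produced actually lies in $\dom(TS)=\dom S\cap S^{-1}(\dom T)$ and that $k=f-Sh\in\dom T$.
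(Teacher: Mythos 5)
Your overall architecture --- the cycle (i)$\Rightarrow$(ii)$\Rightarrow$(iii)$\Rightarrow$(iv)$\Rightarrow$(i), with Theorem \ref{T:theorem22} closing the loop --- matches the paper's, and three of the four implications are essentially sound. Two small remarks on (i)$\Rightarrow$(ii): the inclusion $G(S)\subseteq \ran M_{S,T}$ is \emph{always} true and trivially so, since $M_{S,T}(h,0)=(h,Sh)$; your aside that it ``is equivalent to $T^*=S$'' is false, though harmless. And to pass from ``$\ran M_{S,T}$ contains $G(S)$ and $G(S)^{\perp}$'' to surjectivity you need $G(S)$ closed, which here is guaranteed by $S=T^*$ --- say so. Your element-level derivation of (iii) from (ii), solving $(0,f)=M_{S,T}(h,k)$ to get $h=Tk$ and $(I+ST)k=f$ and dually for $I+TS$, is correct and arguably more direct than the paper's.

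The genuine gap is in (iii)$\Rightarrow$(iv) (equivalently, in recovering surjectivity of $M_{S,T}$ from (iii)). You propose, for $(g,f)\in G(S)^{\perp}$, to solve $(I+TS)h=g+Tf$ and set $k=f-Sh$. But $f$ is an arbitrary element of $\kil$ subject only to $(g,f)\perp G(S)$; the relation $S\wedge T$ gives $\dom T\subseteq\dom S^*$ and not the converse, so there is no reason for $f$ to lie in $\dom T$, and the vector $Tf$ is simply undefined. You flag this yourself as ``the delicate point'' but leave it unresolved, and it cannot be repaired by restricting $f$, since the inclusion must hold for every element of $G(S)^{\perp}$. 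The paper's resolution is to decouple the two coordinates through the factorization
\begin{align*}
M_{S,T}\cdot M_{-S,-T}=\kismatrix{I+TS}{0}{0}{I+ST},
\end{align*}
which, unlike the triangular Schur factorization you were wary of, has no domain mismatch: $(h,k)$ lies in $\dom(M_{S,T}\circ M_{-S,-T})$ precisely when $h\in\dom(TS)$ and $k\in\dom(ST)$. Given arbitrary $(g,f)\in\hil\times\kil$, one uses (iii) to solve $(I+TS)h=g$ and $(I+ST)k=f$ \emph{separately}; then $Tk$ and $Sh$ exist, with $Tk\in\dom S$ and $Sh\in\dom T$, and $M_{S,T}(h+Tk,\,k-Sh)=(h+TSh,\,k+STk)=(g,f)$. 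This gives $\ran M_{S,T}=\hil\times\kil$, after which surjectivity of $M_{T,S}$ follows from the observation that $M_{S,T}(h,k)=(u,v)$ implies $M_{T,S}(-k,h)=(-v,u)$, and (iv) is immediate.
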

\begin{proof} To prove that (i) implies (ii) let us recall the following well-known decomposition 
\begin{equation}\label{E:decomp}
G(S)\oplus W\langle G(S^*)\rangle=\hil\times\kil
\end{equation}
regarding the densely defined closed operator $S$ (see \cite[Theorem 4.16]{Weidmann}). Consider  $u\in\hil$ and $v\in\kil$. Since $S^*=T$ by assumption, identity \eqref{E:decomp} yields unique $h\in\dom S$ and $k\in\dom T$ such that
\begin{equation*}
(h,Sh)+(-Tk,k)=(u,v).
\end{equation*}
Consequently, $u=h-Tk$ and $v=Sh+k$, or in other words, the operator matrix $M_{S,T}$ maps $(h,k)$ into $(u,v)$:
\begin{equation*}
M_{S,T}(h,k)=(u,v).
\end{equation*}
This in turn shows that $M_{S,T}$ has  full range, i.e., $$\ran M_{S,T} =\hil\times\kil.$$ A very similar argument shows that  $$\ran M_{T,S} =\kil\times\hil,$$ as well. To see that  (ii) implies (iii) we are going to prove first that $M_{-S,-T}$  has full range too. To this aim fix $u\in\hil$ and $v\in\kil$ and choose $h\in\dom S,k\in\dom T$ such that $M_{S,T}(h,k)=(u,v).$ In other words, we have the following equalities: 
\begin{align*}
h-Tk&=u,\\
Sh+k&=v.
\end{align*}
An easy calculation shows that  $M_{-S,-T}(-h,k)=(-u,v)$, hence we have $\ran M_{-S,-T}=\hil\times\kil$, indeed. Then identity
\begin{align}\label{E:factor}
M_{S,T}\cdot M_{-S,-T}=\kismatrix{I+TS}{0}{0}{I+ST}
\end{align}
shows that $\ran (I+TS)=\hil$ and $\ran (I+ST)=\kil$ because the product of   surjective operators itself is  surjective. Implication (iii) $\Rightarrow$ (iv) goes similar: by factorization \eqref{E:factor} we see that $\ran M_{S,T}=\hil\times \kil$. Clearly, (iv) will be proved if we show that $\ran M_{T,S}=\kil\times\hil$. For let $u\in \hil$ and $v\in \kil$ and choose $h$ from $\dom S$ and $k$ from $\dom T$ such that $M_{S,T}(h,k)=(u,v)$, then an easy calculation shows that $M_{T,S}(-k,h)=(-v,u)$ whence the surjectivity of $M_{T,S}$ follows.
The missing implication (iv)$\Rightarrow$(i)   follows immediately from Theorem \ref{T:theorem22}.
\end{proof}

\begin{theorem}\label{T:kerSranT}
Let $S$ and $T$ be linear operators between Hilbert spaces $\hil$ and $\kil$, respectively, $\kil$ and $\hil$, such that they are adjoint to each other. Suppose that 
$$\ker S+\ran T=\hil \qquad \mbox{and}\qquad \ran S+\ker T=\kil.$$ 
Then both $S$ and $T$ are densely defined operators having closed range, such that they are adjoint of each other: $S^*=T$ and $T^*=S$.
\end{theorem}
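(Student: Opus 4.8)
The plan is to deduce this from Theorem \ref{T:theorem31}, specifically from the equivalence of (i) and (ii): it suffices to show that the hypotheses $\ker S+\ran T=\hil$ and $\ran S+\ker T=\kil$ (together with $S\wedge T$) force $\ran M_{S,T}=\hil\times\kil$. So fix $(u,v)\in\hil\times\kil$; I want to produce $h\in\dom S$ and $k\in\dom T$ with $h-Tk=u$ and $Sh+k=v$.

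The key observation is that $S\wedge T$ already gives two orthogonality relations that split the problem. Indeed, for $h\in\dom S$ and $k\in\dom T$ one has $\sip{Sh}{k'}=\sip{h}{Tk'}$, so $\ran T\perp\ker S$ is \emph{not} automatic, but $\ker S$ and $\overline{\ran T}$ interact through the adjoining identity. The cleaner route: from $S\wedge T$ we get $\ran S\subseteq (\ker T)^{\perp}$... no — rather I will use $S\wedge T$ to see that the two stated sum decompositions are in fact \emph{direct} in a suitable sense and then patch solutions together. Concretely, first decompose $u=u_0+Tk_1$ with $u_0\in\ker S$, $k_1\in\dom T$, and decompose $v=v_0+Sh_1$ with $v_0\in\ker T$, $h_1\in\dom S$. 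Then set $k:=v_0+Sh$ for an $h$ yet to be determined and $h:=u_0+Tk$; substituting, one is led to a fixed-point type equation. The trick that makes it close is that $S$ vanishes on $u_0$ and $T$ vanishes on $v_0$, so applying $S$ to $h=u_0+Tk$ gives $Sh=STk$, and applying $T$ to $k=v_0+Sh$ gives $Tk=TSh$; combined with $S\wedge T$ (which gives $\sip{STk}{k}=\sip{Tk}{Tk}\ge 0$, i.e. $I+ST$ and $I+TS$ are injective with the right sign) one gets that the relevant operator restricted to the right subspace is bounded below, hence the needed element exists.

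Let me restructure this more carefully, since that is the heart of the matter. Put $h=u_0+Tk$, $k=v_0+Sh$. Then $h-Tk=u_0$ and $Sh+k$... I actually want $h-Tk=u$, so instead take $h-Tk=u$ and $Sh+k=v$ directly: write $h=u+Tk$ and substitute into the second equation to get $Su+STk+k=v$, i.e. $(I+ST)k=v-Su$. The obstruction is exactly solvability of $(I+ST)k=w$ for the particular right-hand side $w=v-Su$ — but $w$ need not lie in the range of $I+ST$ in general. This is where the decomposition hypotheses enter: the point is to choose the splitting of $u$ and $v$ so that the residual equation only needs to be solved \emph{modulo} $\ker S$ and $\ker T$, where $I+ST$ (resp. $I+TS$) is genuinely invertible because of the Schwarz/adjoining inequality $\sip{(I+ST)k}{k}\ge\|k\|^2$. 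So the real argument: pick $h_0\in\dom S$, $k_0\in\dom T$ with $u=(h_0)_{\ker S}\!+Tk_0$ and $v=(k_0')_{\ker T}\!+Sh_0'$ from the hypotheses, reduce the general case to $u\in\ran T$, $v\in\ran S$, solve that reduced system using injectivity-plus-closed-range of $I+ST$ on $\overline{\ran S}$, and finally reassemble. Then Theorem \ref{T:theorem31} yields that $S,T$ are densely defined and adjoint of each other; closedness of the ranges follows because $\ran M_{S,T}=\hil\times\kil$ forces, via the factorization \eqref{E:factor}, that $I+TS$ and $I+ST$ are bounded below on the appropriate subspaces, and then $\ran S$, $\ran T$ inherit closedness from $\ran(I\pm\cdot)$ being closed (or directly: a surjective bounded-below operator has closed range, and one transfers this to $S$, $T$).

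The main obstacle I anticipate is precisely the bookkeeping in the reduction step — making the choice of the two orthogonal-type splittings compatible so that after substitution the leftover equation lives in a subspace where $I+ST$ (or $I+TS$) is invertible, and verifying that the inverse actually lands back in $\dom T$ (resp. $\dom S$). The positivity estimate $\sip{(I+ST)k}{k}=\|k\|^2+\|Sh\|^2$-type identity (a consequence of $S\wedge T$, exactly as in the proof of Lemma \ref{L:lemma21}) is what guarantees injectivity and the lower bound; turning the lower bound into surjectivity onto the closed subspace is where one needs the sum decomposition hypothesis to supply enough range. Everything after "$\ran M_{S,T}=\hil\times\kil$ holds" is then immediate from Theorem \ref{T:theorem31}, and the closed-range assertion is a short corollary of the same surjectivity together with the boundedness-below of $I+ST$, $I+TS$.
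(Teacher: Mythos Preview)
Your proposal is not a proof but an exploration with several false starts, and the step you yourself flag as ``the main obstacle'' is a genuine gap. You aim for condition (ii) of Theorem~\ref{T:theorem31}, i.e.\ $\ran M_{S,T}=\hil\times\kil$, and reduce this to solving $(I+ST)k=w$ for suitable $w$. The identity $\sip{(I+ST)k}{k}=\|k\|^2+\|Tk\|^2$ (valid for $k\in\dom ST$ by $S\wedge T$) gives injectivity and a lower bound, but \emph{not} surjectivity: a lower bound yields closed range only if the operator is closed, and closed range becomes full range only via density of the domain and triviality of the orthogonal complement---none of which is available, since density of $\dom S$, $\dom T$ is precisely what you are trying to prove. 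Your ``reduce to $u\in\ran T$, $v\in\ran S$'' manoeuvre does not close this circle, and the closed-range claim at the end is likewise left hanging on the same unproved surjectivity.

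The paper takes a shorter route by targeting condition (iv) of Theorem~\ref{T:theorem31} instead of (ii): it suffices to show $G(S)^{\perp}\subseteq\ran M_{S,T}$ (and symmetrically for $T$). The key preliminary step is that the hypotheses force $(\ran S)^{\perp}=\ker T$: write $z\in(\ran S)^{\perp}$ as $z=k+Sh$ with $k\in\ker T$, $h\in\dom S$; then $0=\sip{z}{Sh}=\sip{Sh}{Sh}$, so $z=k\in\ker T$. Now for $(v,w)\in G(S)^{\perp}$, decompose $v=h+Tk$ with $h\in\ker S$; testing against $(h,0)\in G(S)$ gives $h=0$, hence $v=Tk$. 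Testing $(Tk,w)$ against $(x,Sx)$ gives $k+w\in(\ran S)^{\perp}=\ker T$, so $w\in\dom T$ and $(v,w)=(-Tw,w)=M_{S,T}(0,w)$. No surjectivity of $I+ST$ is ever invoked. Closedness of $\ran S$ then drops out immediately from $(\ran S)^{\perp}=\ker T$ together with $\ran S+\ker T=\kil$.
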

\begin{proof}
First we remark that $(\ran S)^{\perp}=\ker T$. For if $z\in(\ran S)^{\perp}$ then $z=k+Sh$ for suitable $k\in\ker T$ and $h\in\dom S$, hence 
\begin{align*}
0=\sip{z}{Sh}=\sip{k+Sh}{Sh}=\sip{Tk}{h}+\sip{Sh}{Sh}=\sip{Sh}{Sh},
\end{align*}
whence $z=k\in\ker T$, i.e.,  $\ran S^{\perp}\subseteq\ker T$.
The converse inclusion is straightforward. A very similar argument shows that $(\ran T)^{\perp}=\ker S$. 

To see that $S^*=T$   we prove that $G(S)^{\perp}\subseteq\ran M_{S,T} $: for let $(v,w)\in G(S)^{\perp}$, and choose $k$ from $\dom T$ and $h$ from $\ker S$ such that $v=h+Tk$. From $(h,0)\in G(S)$ we infer that 
\begin{align*}
0=\sip{(v,w)}{(h,0)}=\sip{h}{h}+\sip{Tk}{h}=\sip{h}{h}+\sip{k}{Sh}=\sip{h}{h},
\end{align*}
whence we get $v=Tk$. On the other hand, 
\begin{align*}
0=\sip{(Tk,w)}{(x,Sx)}=\sip{Tk}{x}+\sip{w}{Sx}=\sip{k+w}{Sx}
\end{align*}
for all $x$ in $\dom S$, whence $k+w\in\ran S^{\perp}=\ker T$. This in turn implies that $w\in\dom T$ and $-Tw=Tk=v$, hence 
\begin{align*}
(v,w)=(-Tw,w)=M_{S,T}(0,w),
\end{align*}
which gives $(v,w)\in\ran M_{S,T}$. A very similar argument shows that $G(T)^{\perp}\subseteq \ran M_{T,S}$, hence $S$ and $T$ are densely defined such that $S^*=T$ and $T^*=S$. Finally, $S$ and $T$ have closed range because $(\ran S)^{\perp}=\ker T$ and $\ran S+\ker T=\kil$, and $(\ran T)^{\perp}=\ker S$ and $\ker S+\ran T=\hil$, respectively.
\end{proof}
The generalized Hellinger--Toeplitz theorem (see e.g. \cite{Riesz}) says that for everywhere defined operators $S$ and $T$ the relation $S\wedge T$ implies that both $S$ and $T$ are bounded and they are adjoint of each other. The corresponding \lq\lq dual" statement is phrased below:
\begin{theorem}
Let $S$ and $T$ be linear operators between Hilbert spaces $\hil$ and $\kil$, respectively $\kil$ and $\hil$, such that they are adjoint to each other: $S\wedge T$. Assume in addition that $S$ and $T$ are of full range, i.e. $\ran S=\kil$ and $\ran T=\hil$. Then $S$ and $T$ have (everywhere defined) bounded inverse, and $S$, $T$ are adjoint of each other.  
\end{theorem}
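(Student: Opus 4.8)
The plan is to deduce this from Theorem \ref{T:kerSranT}. The hypothesis $\ran S = \kil$ and $\ran T = \hil$ already gives $\ran S + \ker T = \kil$ and $\ker S + \ran T = \hil$ trivially, so Theorem \ref{T:kerSranT} applies verbatim: $S$ and $T$ are densely defined, have closed range (which we already knew), and are adjoint of each other, i.e.\ $S^* = T$ and $T^* = S$. So it only remains to show that $S$ and $T$ are injective with \emph{bounded} everywhere-defined inverses.

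For injectivity, recall from the proof of Theorem \ref{T:kerSranT} (or re-derive directly) that $(\ran S)^\perp = \ker T$ and $(\ran T)^\perp = \ker S$. Since $\ran S = \kil$, we get $\ker T = \kil^\perp = \{0\}$, and symmetrically $\ker S = \{0\}$. Thus $S\colon \dom S \to \kil$ and $T\colon \dom T \to \hil$ are bijective linear maps, so $S^{-1}$ and $T^{-1}$ exist as everywhere-defined linear operators, $S^{-1}\colon \kil \to \hil$ with $\ran S^{-1} = \dom S$, and likewise for $T^{-1}$.

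It remains to show boundedness; this is the only real content. The clean route is via the closed graph theorem. Since $S = T^*$ is an adjoint, $S$ is closed; hence its graph $G(S)$ is closed in $\hil \times \kil$, and therefore $G(S^{-1}) = \set{(Sh, h)}{h \in \dom S}$, being the image of $G(S)$ under the bounded bijection $(h,k)\mapsto (k,h)$ of $\hil\times\kil$ onto $\kil\times\hil$, is closed too. As $S^{-1}$ is everywhere defined on the Hilbert space $\kil$ with closed graph, the closed graph theorem yields $S^{-1} \in \mathscr{B}(\kil;\hil)$; the same argument with $T$ in place of $S$ gives $T^{-1} \in \mathscr{B}(\hil;\kil)$. (If one prefers to avoid the closed graph theorem, one can instead note that $S^{-1}$ and $T^{-1}$ are themselves adjoint to each other --- from $\sip{Sh}{k} = \sip{h}{Tk}$ one gets $\sip{h}{T^{-1}\eta} = \sip{S^{-1} h'}{T^{-1}\eta}$-type identities after substituting $h = S^{-1}h'$, $k = T^{-1}\eta$, showing $(S^{-1})^* = T^{-1}$ --- and then invoke the generalized Hellinger--Toeplitz theorem quoted just before the statement, since $S^{-1}$ and $T^{-1}$ are everywhere defined. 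Either way works.)

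The main obstacle, such as it is, is simply recognizing that boundedness is not automatic from the algebraic setup and must be extracted from closedness of the adjoint plus the closed graph theorem (or Hellinger--Toeplitz); everything else is bookkeeping built on Theorem \ref{T:kerSranT} and the orthogonality relations $(\ran S)^\perp = \ker T$, $(\ran T)^\perp = \ker S$ established in its proof. I would also remark that, once $S^* = T$ and both inverses are bounded, one automatically has $(S^{-1})^* = (S^*)^{-1} = T^{-1}$, so the conclusion is internally consistent.
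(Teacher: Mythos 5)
Your proof is correct, but your primary route differs from the paper's. The paper does not invoke Theorem \ref{T:kerSranT} at all: it first checks injectivity of $S$ and $T$ directly (if $Sh=0$ then $\sip{h}{Tk}=0$ for all $k\in\dom T$, so $h\perp\ran T=\hil$), observes that the everywhere defined inverses satisfy $T^{-1}\wedge S^{-1}$ by a one-line computation, and then lets the generalized Hellinger--Toeplitz theorem do all the remaining work in one stroke --- boundedness of $S^{-1},T^{-1}$ \emph{and} $(S^{-1})^*=T^{-1}$ --- from which $S^*=T$ and $T^*=S$ follow. That is exactly the alternative you sketch in your parenthesis, so you have effectively rediscovered the paper's argument as your ``plan B.'' Your main route instead front-loads the mutual adjointness by applying Theorem \ref{T:kerSranT} (whose hypotheses are indeed trivially met when $\ran S=\kil$ and $\ran T=\hil$), extracts injectivity from the orthogonality relations $(\ran S)^{\perp}=\ker T$ and $(\ran T)^{\perp}=\ker S$, and then gets boundedness of the everywhere defined $S^{-1}$ from closedness of $S=T^*$ plus the closed graph theorem. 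Both arguments are sound; yours leans on the heavier Theorem \ref{T:kerSranT} but makes the boundedness step completely standard, while the paper's is lighter and more self-contained, deriving everything from the Hellinger--Toeplitz principle applied to the inverses. One stylistic remark: since the theorem's whole point is to be a ``dual'' of Hellinger--Toeplitz, the paper's choice to route the proof through that theorem is deliberate, and your parenthetical version is the one closest in spirit to the intended exposition.
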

\begin{proof}
First of all we observe  that $\ker S=\{0\}$ (respectively, $\ker T=\{0\}$), hence 
the inverse operators $S^{-1}$ and $T^{-1}$ exist. Indeed, if $Sh=0$ for some $h\in\dom S$ then for every $k$ from $\dom T$ one has
\begin{equation*}
0=\sip{Sh}{k}=\sip{h}{Tk},
\end{equation*}
whence $h=0$, being orthogonal to $\ran T=\hil$. Here, $S^{-1}$ and $T^{-1}$ are adjoint to each other because for $h\in\hil$ and $k=Sh'\in\kil$ we have
\begin{align*}
\sip{T^{-1}h}{k}=\sip{T^{-1}h}{Sh'}=\sip{TT^{-1}h}{h'}=\sip{h}{h'}=\sip{h}{S^{-1}k}.
\end{align*}
By the remark preceding the theorem, $S^{-1}$ and $T^{-1}$ are bounded operators such that $(S^{-1})^*=T^{-1}$. Consequently, $S$ and $T$ are densely defined and fulfill \eqref{E:S=T*,S*=T}.
\end{proof}
\section{Adjoint of sums and products}
Given two densely defined linear operators $R,S$ acting between Hil\-bert spaces $\hil$ and $\kil$ one cannot expect in general the additive identity 
\begin{equation}\label{E:(R+S)^*}
(R+S)^*=R^*+S^*,
\end{equation}
all the more so, because the operator on the left side does not exist in general. A well-known assumption  for \eqref{E:(R+S)^*} is that any of the operators be bounded (see \cite{Riesz}). For more general results the reader may consult   \cite{Birman,Gesztesy, KATO, PopSZTZS,SZTZS2015, Weidmann}. In the next theorem we provide necessary and sufficient conditions in order that \eqref{E:(R+S)^*} be satisfied (cf. also \cite[Theorem 2.1]{SZTZS2015}).
\begin{theorem}\label{T:R+S}
Let $R, S$ be densely defined linear operators acting between $\hil$ and $\kil$. Then the following statements are equivalent:
\begin{enumerate}[\upshape (i)]
 \item $R+S$ is densely defined and $(R+S)^*=R^*+S^*$.
 \item $G(R+S)^{\perp}=W\langle G(R^*+S^*)\rangle$.
 \item $G(R+S)^{\perp}\subseteq \ran (M_{R+S,R^*+S^*})$.
\end{enumerate}
\end{theorem}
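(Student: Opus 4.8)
The plan is to obtain the theorem as a direct corollary of Theorem \ref{T:theorem22}, applied to the pair consisting of the operator $R+S$ (from $\hil$ into $\kil$) and the operator $R^*+S^*$ (from $\kil$ into $\hil$). Since $R$ and $S$ are densely defined, the adjoints $R^*$ and $S^*$ exist, so $R^*+S^*$ is a genuine linear operator with $\dom(R^*+S^*)=\dom R^*\cap\dom S^*$, exactly as $\dom(R+S)=\dom R\cap\dom S$. In particular $G(R+S)^{\perp}$, $W\langle G(R^*+S^*)\rangle$ and the matrix $M_{R+S,\,R^*+S^*}=\kisoperator{-(R^*+S^*)}{R+S}$ appearing in (ii) and (iii) are precisely the objects to which Theorem \ref{T:theorem22} will be applied.

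The one substantive point is that $R+S$ and $R^*+S^*$ are automatically adjoint to each other, i.e.\ $(R+S)\wedge(R^*+S^*)$, with no hypothesis beyond dense definedness of $R$ and $S$. This is a one-line verification: for every $h\in\dom R\cap\dom S$ and $k\in\dom R^*\cap\dom S^*$ one has
\[
\sip{(R+S)h}{k}=\sip{Rh}{k}+\sip{Sh}{k}=\sip{h}{R^*k}+\sip{h}{S^*k}=\sip{h}{(R^*+S^*)k}.
\]

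With this in hand I would invoke Theorem \ref{T:theorem22}, replacing the operator $S$ there by $R+S$ and the operator $T$ there by $R^*+S^*$. Condition (i) of that theorem then reads precisely \emph{``$R+S$ is densely defined and $(R+S)^*=R^*+S^*$''}, which is (i) here; and since the clause ``$S\wedge T$'' is now automatically satisfied, conditions (ii) and (iii) of Theorem \ref{T:theorem22} reduce exactly to (ii) and (iii) of the present statement. Hence (i) $\Leftrightarrow$ (ii) $\Leftrightarrow$ (iii). I do not expect any genuine obstacle: the proof is essentially a translation, and the only point demanding (minor) care is the bookkeeping of the domains of $R+S$ and $R^*+S^*$ noted above, which guarantees that the sets named in Theorem \ref{T:theorem22} coincide with those in the statement.
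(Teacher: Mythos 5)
Your proposal is correct and is exactly the paper's argument: verify the one-line fact that $R+S$ and $R^*+S^*$ are adjoint to each other, then apply Theorem \ref{T:theorem22} with $S$ and $T$ there replaced by $R+S$ and $R^*+S^*$. The paper's proof is precisely this, stated even more briefly.
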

\begin{proof}
A direct calculation shows that $R+S$ and $R^*+S^*$ are adjoint to each other, hence the statement of the theorem follows from Theorem \ref{T:theorem22}.
\end{proof}
Next we concern with the multiplicative version
\begin{equation}\label{E:(RS)*}
 (RS)^*=S^*R^*,
\end{equation}
where $R,S$ are operators acting between Hilbert spaces $\hil_2$ and $\hil_3$, respectively, $\hil_1$ and $\hil_2$. Just like in the ``additive" case, we may not expect \eqref{E:(RS)*} to hold in general. The multiplicative identity can be guaranteed by some strongly restrictive conditions, for example, if $R$ is (everywhere defined) bounded operator (see \cite{Riesz}) or when $S$ admits bounded inverse (see \cite{Birman}). As an application of Theorem \ref{T:theorem22} we gain necessary and sufficient conditions (cf. also \cite{SZTZS2015}):
\begin{theorem}\label{T:RS*}
Let $R,S$ be operators acting between Hilbert spaces $\hil_2$ and $\hil_3$, respectively $\hil_1$ and $\hil_2$. Then the following assertions are equivalent:
\begin{enumerate}[\upshape (i)]
\item $RS$ is densely defined and $(RS)^*=S^*R^*$.
\item $G(RS)^{\perp}=W\langle G(S^*R^*)\rangle$.
 \item $G(RS)^{\perp}\subseteq \ran (M_{RS,S^*R^*})$.
\end{enumerate}
\end{theorem}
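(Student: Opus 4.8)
The plan is to deduce the theorem from Theorem \ref{T:theorem22} applied to the pair $RS$ and $S^*R^*$, exactly in the spirit of the proof of Theorem \ref{T:R+S}. Note first that both $R^*$ and $S^*$ exist, since $R$ and $S$ are assumed densely defined, so $S^*R^*$ is a well-defined (though possibly not densely defined) linear operator. The single point that has to be established is that $RS$ and $S^*R^*$ are adjoint to each other, $RS\wedge S^*R^*$; granting this, conditions (i), (ii), (iii) of the present theorem are precisely the specializations of conditions (i), (ii), (iii) of Theorem \ref{T:theorem22} to the pair $(RS,S^*R^*)$, the relation $RS\wedge S^*R^*$ being automatic and hence droppable from (ii) and (iii).

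To verify $RS\wedge S^*R^*$ I would simply unwind the relevant domains,
\begin{align*}
\dom(RS)=\set{h\in\dom S}{Sh\in\dom R},\qquad \dom(S^*R^*)=\set{k\in\dom R^*}{R^*k\in\dom S^*},
\end{align*}
and then chain the two adjoining identities already available: for $h\in\dom(RS)$ and $k\in\dom(S^*R^*)$ apply $R\wedge R^*$ to the pair $(Sh,k)$ --- legitimate because $Sh\in\dom R$ and $k\in\dom R^*$ --- and then $S\wedge S^*$ to the pair $(h,R^*k)$ --- legitimate because $h\in\dom S$ and $R^*k\in\dom S^*$ --- to obtain
\begin{align*}
\sip{RSh}{k}=\sip{Sh}{R^*k}=\sip{h}{S^*R^*k}.
\end{align*}
This is exactly the defining relation $RS\wedge S^*R^*$.

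With this in hand, Theorem \ref{T:theorem22}, read with $S$ there replaced by $RS$ and $T$ there replaced by $S^*R^*$, yields the equivalence of ``$RS$ is densely defined and $(RS)^*=S^*R^*$'', ``$RS\wedge S^*R^*$ and $G(RS)^\perp=W\langle G(S^*R^*)\rangle$'', and ``$RS\wedge S^*R^*$ and $G(RS)^\perp\subseteq\ran M_{RS,S^*R^*}$''. Since $RS\wedge S^*R^*$ holds unconditionally, it drops out of the last two, which then read as (ii) and (iii), while the first is (i). This finishes the proof.

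I do not foresee any real obstacle: all the substance is carried by Theorem \ref{T:theorem22}, and the present statement is a corollary of it once $RS\wedge S^*R^*$ is checked. The only step requiring a little care is the domain bookkeeping in that verification --- ensuring that the intermediate vectors $Sh$ and $R^*k$ lie in $\dom R$ and $\dom S^*$ respectively --- and this is immediate from the definitions of $\dom(RS)$ and $\dom(S^*R^*)$.
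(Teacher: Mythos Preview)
Your proof is correct and follows exactly the paper's approach: verify by a direct calculation that $RS\wedge S^*R^*$, then invoke Theorem \ref{T:theorem22} with $RS$ and $S^*R^*$ in place of $S$ and $T$. The paper's proof is just the one-line version of what you wrote, leaving the domain bookkeeping implicit.
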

\begin{proof}
A direct calculation shows that $RS$ and $S^*R^*$ are adjoint to each other, hence the statement of the theorem follows from Theorem \ref{T:theorem22}.
\end{proof}

Closures of  sum and product of two linear operators may appear as the sum, respectively the product of the closures of the operators (see also Appendix B of \cite{Gesztesy}):
\begin{corollary}
Let $R,S$ be densely defined closable operators on a Hilbert space $\hil$ such that the following two relations are satisfied: 
\begin{enumerate}[\upshape a)]
 \item $G(R+S)^{\perp}\subseteq \ran(M_{R+S,R^*+S^*})$,
 \item $G(R^*+S^*)^{\perp}\subseteq\ran(M_{R^*+S^*,R^{**}+S^{**}})$.
\end{enumerate}
Then we have the next additive property of the closure operation:
\begin{align*}
(R+S)^{**}=R^{**}+S^{**}.
\end{align*}
\end{corollary}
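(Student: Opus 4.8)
The plan is to invoke Theorem~\ref{T:R+S} twice — first for the pair $(R,S)$, then for the pair $(R^*,S^*)$ — and to compose the two resulting adjoint identities. The key observation is that assumptions a) and b) are, word for word, the range inclusion (iii) of Theorem~\ref{T:R+S} for these two respective pairs.

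First I would record the standing facts about closable operators. Since $R$ and $S$ are densely defined and closable, their adjoints $R^*$ and $S^*$ are densely defined (and automatically closed), and one has $R^{**}=\overline{R}$, $S^{**}=\overline{S}$. In particular the symbols $R^{**}+S^{**}$ appearing in assumption b) and in the conclusion refer to genuine densely defined operators, and the pair $(R^*,S^*)$ is admissible as input for Theorem~\ref{T:R+S}.

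Next, applying Theorem~\ref{T:R+S} to $R$ and $S$: assumption a) is exactly condition (iii) of that theorem, so condition (i) holds, i.e. $R+S$ is densely defined and $(R+S)^*=R^*+S^*$. Now apply Theorem~\ref{T:R+S} once more, this time with $R^*$ and $S^*$ in the roles of the two densely defined operators, so that their adjoints are $(R^*)^*=R^{**}$ and $(S^*)^*=S^{**}$: assumption b) is precisely condition (iii) for this pair, hence $R^*+S^*$ is densely defined and $(R^*+S^*)^*=R^{**}+S^{**}$. Combining the two conclusions, $(R+S)^*=R^*+S^*$ is densely defined, so $(R+S)^{**}$ exists, and
\[
(R+S)^{**}=\bigl((R+S)^*\bigr)^*=(R^*+S^*)^*=R^{**}+S^{**},
\]
which is the asserted identity.

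As for the difficulty: there is essentially no obstacle beyond bookkeeping. The only points requiring a moment's care are (1) checking that the hypotheses of Theorem~\ref{T:R+S} are met for the second pair, which amounts to observing that closability of $R,S$ supplies the dense definedness of $R^*,S^*$; and (2) recalling from the proof of Theorem~\ref{T:R+S} that for operators of this shape the relation ``$R+S$ and $R^*+S^*$ are adjoint to each other'' is automatic, so that the bare range inclusions a) and b) are genuinely equivalent to the full hypothesis (iii) and nothing further needs to be verified.
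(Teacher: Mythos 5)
Your proposal is correct and follows exactly the paper's own argument: apply Theorem~\ref{T:R+S} to the pair $(R,S)$ using hypothesis a), then again to the pair $(R^*,S^*)$ using hypothesis b), and compose the two adjoint identities. The extra care you take (noting closability gives dense definedness of $R^*,S^*$, and that the adjoint-to-each-other relation is automatic) is exactly the bookkeeping the paper leaves implicit.
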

\begin{proof}
Assumption a) implies by Theorem \ref{T:R+S} that $(R+S)^*=R^*+S^*$. One more application of the same theorem proves the statement.
\end{proof}
\begin{corollary}
Given two closable densely defined operators $R$ and $S$ on a Hilbert space $\hil$ such that they satisfy the following two conditions:
\begin{enumerate}[\upshape a)]
 \item $G(RS)^{\perp}\subseteq \ran (M_{RS,S^*R^*})$,
 \item $G(S^*R^*)^{\perp}\subseteq \ran(M_{S^*R^*,R^{**}S^{**}})$.
\end{enumerate}
Then we have the next multiplicative property of the closure operation:
\begin{align*}
(RS)^{**}=R^{**}S^{**}.
\end{align*}
\end{corollary}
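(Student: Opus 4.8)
The plan is to mimic the proof of the corresponding additive corollary, using Theorem \ref{T:RS*} twice. First I would observe that hypothesis a) is exactly condition (iii) of Theorem \ref{T:RS*} applied to the pair $R,S$, so that theorem yields at once that $RS$ is densely defined and $(RS)^*=S^*R^*$. In particular, $S^*R^*$ is the adjoint of a densely defined operator, hence is itself closed, and (being an adjoint) it is automatically closed; what we actually need for the next step is that $S^*R^*$ be \emph{densely defined}, which follows since $(S^*R^*)^*=(RS)^{**}$ exists precisely when $S^*R^*$ is densely defined — but this is guaranteed because $R$ and $S$ are closable, so $R^{**}$ and $S^{**}$ exist, and a direct computation (of the same kind used to prove Theorem \ref{T:RS*}) shows $S^*R^*$ and $R^{**}S^{**}$ are adjoint to each other, in particular $\dom(S^*R^*)$ is dense as soon as $\dom(R^{**}S^{**})$ is, which holds since $R^{**}\supseteq R$ and $S^{**}\supseteq S$ are densely defined.

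The second step is to apply Theorem \ref{T:RS*} again, this time to the pair $R^*$ and $S^*$ in place of $R$ and $S$. Here one uses that $(R^*)^*=R^{**}$ and $(S^*)^*=S^{**}$ because $R$ and $S$ are closable. Then hypothesis b), namely $G(S^*R^*)^{\perp}\subseteq\ran(M_{S^*R^*,R^{**}S^{**}})$, is precisely condition (iii) of Theorem \ref{T:RS*} for the pair $R^*,S^*$, so the theorem gives that $S^*R^*$ is densely defined and $(S^*R^*)^*=(S^*)^*(R^*)^*=R^{**}S^{**}$. Combining the two applications,
\begin{align*}
(RS)^{**}=\bigl((RS)^*\bigr)^*=(S^*R^*)^*=R^{**}S^{**},
\end{align*}
which is the desired identity.

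I do not anticipate a genuine obstacle: the argument is structurally identical to the additive corollary, and Theorem \ref{T:RS*} is explicitly stated to be applicable whenever the two operators in question are adjoint to each other, which holds for $RS,S^*R^*$ and for $S^*R^*,R^{**}S^{**}$ by the ``direct calculation'' invoked in the proof of that theorem. The one point deserving a sentence of care is the verification that $(R^*)^*=R^{**}$ and $(S^*)^*=S^{**}$ make sense, i.e. that $R^*$ and $S^*$ are themselves densely defined — but this is exactly the hypothesis that $R$ and $S$ are closable, so nothing further is needed. Everything else is bookkeeping with adjoints and the associativity of the $(\cdot)^*$ operation, so I would keep the written proof to two or three sentences, exactly parallel to the preceding corollary.
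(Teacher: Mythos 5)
Your proof is correct and follows essentially the same route as the paper: one application of Theorem \ref{T:RS*} to the pair $R,S$ using hypothesis a) to get $(RS)^*=S^*R^*$, and a second application to the pair $R^*,S^*$ using hypothesis b) to get $(S^*R^*)^*=R^{**}S^{**}$. The extra worrying about dense definedness of $S^*R^*$ is harmless but unnecessary, since that is already part of conclusion (i) of the theorem in its second application.
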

\begin{proof}
Assumption  a) implies $(RS)^*=S^*R^*$ in view of Theorem \ref{T:RS*}. Hence one more use of the theorem yields the statement.
\end{proof}
\begin{corollary}
Let $R,S$ be linear operators in the Hilbert space $\hil$ and assume they are adjoint to themselves (that is,  $R\wedge R$ and $S\wedge S$). Then the following two assertions are equivalent:
\begin{enumerate}[\upshape (i)]
 \item $\dom(RS)$ is dense and $(RS)^*=SR$,
 \item $G(RS)^{\perp}\subseteq \ran(M_{RS,SR})$.
\end{enumerate}
\end{corollary}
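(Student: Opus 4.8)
The plan is to reduce everything to Theorem~\ref{T:theorem22} applied to the operator pair $RS$ and $SR$. Indeed, if we can show that $RS$ and $SR$ are automatically adjoint to each other, i.e. $RS\wedge SR$, then conditions (i) and (ii) of the corollary become \emph{verbatim} conditions (i) and (iii) of Theorem~\ref{T:theorem22} with $S$ there replaced by $RS$ and $T$ there replaced by $SR$, and the asserted equivalence follows at once.

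So the one point that needs work is the claim $RS\wedge SR$. Here I would unwind the definitions of the relevant domains: $h\in\dom(RS)$ means $h\in\dom S$ with $Sh\in\dom R$, and $k\in\dom(SR)$ means $k\in\dom R$ with $Rk\in\dom S$. The hypotheses $R\wedge R$ and $S\wedge S$ say precisely that $R$ and $S$ are symmetric, so for such $h$ and $k$ one computes
\begin{equation*}
\sip{RSh}{k}=\sip{Sh}{Rk}=\sip{h}{SRk},
\end{equation*}
the first equality using symmetry of $R$ (legitimate because $Sh,k\in\dom R$) and the second using symmetry of $S$ (legitimate because $h,Rk\in\dom S$). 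This is exactly $RS\wedge SR$.

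With that in hand the rest is bookkeeping: Theorem~\ref{T:theorem22} applied to $(RS,SR)$ states that ``$RS$ is densely defined and $(RS)^*=SR$'' is equivalent to ``$RS\wedge SR$ and $G(RS)^{\perp}\subseteq\ran(M_{RS,SR})$''; since $RS\wedge SR$ always holds under our hypotheses, the latter reduces to $G(RS)^{\perp}\subseteq\ran(M_{RS,SR})$, which is (ii). I expect the only genuine obstacle to be the domain bookkeeping in the displayed computation — one must verify $Sh,k\in\dom R$ before invoking symmetry of $R$ and $h,Rk\in\dom S$ before invoking symmetry of $S$, and the order in which the two symmetry identities are used is forced by the structure of $\dom(RS)$ and $\dom(SR)$.
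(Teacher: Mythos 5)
Your proposal is correct and follows essentially the same route as the paper: verify $RS\wedge SR$ via the chain $\sip{RSh}{k}=\sip{Sh}{Rk}=\sip{h}{SRk}$ using the symmetry of $R$ and $S$, and then invoke Theorem~\ref{T:theorem22} for the pair $(RS,SR)$. Your explicit domain bookkeeping is a welcome (and accurate) elaboration of the one-line computation the paper gives.
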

\begin{proof}
We need only to check that $RS\wedge SR$: 
\begin{align*}
\sip{RSh}{k}=\sip{Sh}{Rk}=\sip{h}{SRk},\qquad h\in\dom(RS), k\in\dom(SR),
\end{align*}
indeed.
\end{proof}
A symmetric version of the above result reads as follows:
\begin{corollary}
Given two linear operators $R,S$ in the Hilbert space $\hil$ satisfying $R\wedge R$ and $S\wedge S$. The following two assertions  are equivalent:
\begin{enumerate}[\upshape (i)]
 \item $RS$ and $SR$ are adjoint of each other (i.e., $\dom (RS)$ and $\dom (SR)$ are dense and $(RS)^*=SR$, $(SR)^*=RS$),
 \item $G(RS)^{\perp}\subseteq \ran (M_{RS,SR})$ and $G(SR)^{\perp}\subseteq \ran(M_{SR,RS}).$
\end{enumerate}
\end{corollary}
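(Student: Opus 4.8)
The plan is to derive this as an immediate consequence of Theorem \ref{T:theorem31} applied to the operator pair $RS$, $SR$. The one substantive point is to verify that $RS$ and $SR$ are adjoint to each other; once that is in place, the corollary is just a transcription of the equivalence (i)\,$\Leftrightarrow$\,(iv) of that theorem.

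First I would check $RS\wedge SR$. Fix $h\in\dom(RS)$ and $k\in\dom(SR)$; by definition this means $h\in\dom S$ with $Sh\in\dom R$, and $k\in\dom R$ with $Rk\in\dom S$, so all the inner products below are meaningful. Using first $R\wedge R$ and then $S\wedge S$ (the two standing hypotheses of the corollary), one computes
\begin{align*}
\sip{RSh}{k}=\sip{Sh}{Rk}=\sip{h}{SRk},
\end{align*}
which is exactly the relation $RS\wedge SR$, valid without any density or closedness assumption.

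Then I would apply Theorem \ref{T:theorem31} with $S$ there replaced by $RS$ and $T$ there replaced by $SR$. In that notation, condition (iv) of the theorem asserts $RS\wedge SR$ together with $G(RS)^{\perp}\subseteq\ran M_{RS,SR}$ and $G(SR)^{\perp}\subseteq\ran M_{SR,RS}$; since $RS\wedge SR$ has just been established unconditionally, this is precisely assertion (ii) of the corollary. Likewise, condition (i) of Theorem \ref{T:theorem31} asserts that $RS$ and $SR$ are densely defined with $(RS)^*=SR$ and $(SR)^*=RS$, i.e. that $RS$ and $SR$ are adjoint of each other, which is precisely assertion (i) of the corollary. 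Hence the equivalence (i)\,$\Leftrightarrow$\,(iv) of Theorem \ref{T:theorem31} yields (i)\,$\Leftrightarrow$\,(ii). (Alternatively, one could apply the previous corollary twice, once to the pair $R,S$ and once to the pair $S,R$, and simply conjoin the two resulting equivalences.)

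I do not anticipate a genuine obstacle here. The only thing that needs care is the domain bookkeeping for the composite operators $RS$ and $SR$ which legitimizes the short computation establishing $RS\wedge SR$; beyond that, the argument is a direct citation of Theorem \ref{T:theorem31}.
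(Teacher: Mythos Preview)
Your proof is correct and essentially matches the paper's approach. The paper's one-line proof simply cites the preceding corollary (applied twice, once to $R,S$ and once to $S,R$), which is exactly the alternative you mention; your main route via Theorem~\ref{T:theorem31} (i)$\Leftrightarrow$(iv) is an equally direct shortcut, and the verification of $RS\wedge SR$ is identical to what the paper already did in the proof of that preceding corollary.
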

\begin{proof}
This is an straightforward consequence of the preceding corollary.
\end{proof}

\section{Range of adjoint operators}
For a densely defined closed linear operator $S$ between $\hil$ and $\kil$ one has the orthogonal decomposition
\begin{align*}
\hil = \ker S \oplus \overline{\ran S^*},
\end{align*}
so that the elements of the range closure of the adjoint operator $S^*$ are obtained as the ones being orthogonal to the kernel of $S$. Describing the elements of the range of $S^*$ is more involved. A metric characterization of $\ran S^*$ is given in \cite{Sebestyen83a} by the first named author. Below we provide a generalization of that result for the case of operators which are adjoint to each other.  
\begin{theorem}\label{T:Sebestyen83}
Let $\hil$, $\kil$ be real or complex Hilbert spaces and let $S:\hil\to\kil$ and $T:\kil\to\hil$ be linear operators adjoint to each other and assume  that $\overline{\ran S}\cap pr_{\kil}\langle W\langle G(S)^{\perp}\rangle\rangle\subseteq \dom T$. For a given $z\in\hil$ the following assertions are equivalent:
\begin{enumerate}[\upshape (i)]
 \item $z\in\ran T+(\dom S)^{\perp}$;
 \item There is $M_z\geq0$ such that 
 \begin{align*}
    \abs{\sip{x}{z}}\leq M_z\|Sx\|,\qquad \forall x\in\dom S. 
\end{align*}
\end{enumerate}
\end{theorem}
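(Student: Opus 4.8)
The plan is to prove the two implications separately, using the generalized Schwarz inequality to build the decomposition in (i). For the direction (i)$\Rightarrow$(ii): suppose $z = Tk + y$ with $k\in\dom T$ and $y\in(\dom S)^\perp$. Then for every $x\in\dom S$ we have $\sip{x}{z} = \sip{x}{Tk} + \sip{x}{y} = \sip{Sx}{k} + 0 = \sip{Sx}{k}$, where I used $S\wedge T$ for the middle step. The Cauchy--Schwarz inequality in $\kil$ then yields $\abs{\sip{x}{z}}\leq \|k\|\cdot\|Sx\|$, so (ii) holds with $M_z = \|k\|$. This direction is routine and uses neither the closedness nor the technical hypothesis on $\overline{\ran S}\cap pr_{\kil}\langle W\langle G(S)^\perp\rangle\rangle$.

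For the harder direction (ii)$\Rightarrow$(i): the inequality in (ii) says precisely that the linear functional $Sx\mapsto \sip{x}{z}$ is well-defined and bounded on $\ran S\subseteq\kil$ (well-definedness: if $Sx = Sx'$ then $\abs{\sip{x-x'}{z}}\leq M_z\|S(x-x')\| = 0$). Extending it by continuity to $\overline{\ran S}$ and by zero on $(\ran S)^\perp$, the Riesz representation theorem in $\kil$ produces a vector $k_0\in\overline{\ran S}$ with $\sip{x}{z} = \sip{Sx}{k_0}$ for all $x\in\dom S$. I would then like to conclude that $k_0\in\dom T$ and $z - Tk_0\in(\dom S)^\perp$: the second assertion is immediate once $k_0\in\dom T$, since $\sip{x}{z - Tk_0} = \sip{x}{z} - \sip{Sx}{k_0} = 0$ for all $x\in\dom S$. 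So everything comes down to showing $k_0\in\dom T$.

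To get $k_0\in\dom T$ I would invoke the standing hypothesis $\overline{\ran S}\cap pr_{\kil}\langle W\langle G(S)^\perp\rangle\rangle\subseteq\dom T$. The point is that $k_0$ lies in $\overline{\ran S}$ by construction, so it suffices to exhibit $k_0$ as $pr_{\kil}$ of some element of $W\langle G(S)^\perp\rangle$, i.e. to find $w\in\hil$ with $(-k_0, w)\in G(S)^\perp$ — equivalently, $(-w, k_0)\in G(S)^{\perp\perp} = \overline{G(S)}$ after applying $W$ — no: more directly, $(v,w)\in G(S)^\perp$ means $\sip{v}{x} + \sip{w}{Sx} = 0$ for all $x\in\dom S$, and $W(k,h) = (-h,k)$, so $W\langle G(S)^\perp\rangle$ consists of the pairs $(-w, v)$ with $(v,w)\in G(S)^\perp$; its $pr_{\kil}$ is $\{v : (v,w)\in G(S)^\perp \text{ some }w\}$. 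Now the relation $\sip{x}{z} = \sip{Sx}{k_0}$, rewritten as $\sip{-z}{x} + \sip{k_0}{Sx} = 0$ for all $x\in\dom S$, says exactly that $(-z, k_0)\in G(S)^\perp$, hence $k_0\in pr_{\kil}\langle G(S)^\perp\rangle$. Combined with $k_0\in\overline{\ran S}$, the hypothesis delivers $k_0\in\dom T$, and the proof is complete. The main obstacle is getting the bookkeeping of $W$, $pr_\kil$ and the orthogonal-complement condition exactly right so that the Riesz vector $k_0$ genuinely falls inside the prescribed set $\overline{\ran S}\cap pr_{\kil}\langle W\langle G(S)^\perp\rangle\rangle$; once the identity $(-z,k_0)\in G(S)^\perp$ is in hand this is essentially bookkeeping, but one must be careful that the hypothesis is stated with $W\langle G(S)^\perp\rangle$ rather than $G(S)^\perp$ itself, and check the two descriptions match up (they do, since $pr_\kil\langle W\langle G(S)^\perp\rangle\rangle = pr_\kil\langle G(S)^\perp\rangle$ as $W$ only permutes and sign-changes the coordinates).
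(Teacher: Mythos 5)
Your proof is correct and follows essentially the same route as the paper: Cauchy--Schwarz for (i)$\Rightarrow$(ii), and for (ii)$\Rightarrow$(i) the Riesz representation of the bounded functional $Sx\mapsto\sip{x}{z}$ on $\overline{\ran S}$, followed by the observation that the representing vector lies in $pr_{\kil}\langle W\langle G(S)^{\perp}\rangle\rangle$ (the paper phrases this as $\kispair{k_0}{z}\in W\langle G(S)\rangle^{\perp}=W\langle G(S)^{\perp}\rangle$, which is the same fact), so the standing hypothesis puts it in $\dom T$ and $z-Tk_0\in(\dom S)^{\perp}$. The only blemish is a momentary mislabeling of which coordinate $pr_{\kil}$ extracts from $W\langle G(S)^{\perp}\rangle$, but your final identification $(-z,k_0)\in G(S)^{\perp}\Rightarrow k_0\in pr_{\kil}\langle W\langle G(S)^{\perp}\rangle\rangle$ is the right one.
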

\begin{proof}
Assume first (i) and choose $u\in\dom T$ and $v\in(\dom S)^{\perp}$ such that $z=Tu+v$. For every $x$ from $\dom S$ we have
\begin{align*}
\abs{\sip{x}{z}}=\abs{\sip{x}{Tu+v}}=\abs{\sip{Sx}{u}} \leq \|u\| \|Sx\|,
\end{align*}
which implies (ii). For the converse implication observe that (ii) forces the following linear functional
\begin{align*}
\ran S\to\mathbb{K};\qquad Sx\mapsto \sip{x}{z}
\end{align*}
to be continuous. By the Riesz representation theorem, there is a unique vector $u\in\overline{\ran S}$ such that 
\begin{align*}
\sip{x}{z}=\sip{Sx}{u},\qquad \forall x\in\dom S.
\end{align*}
Consequently,
\begin{align*}
\sip[\bigg]{\pair{-Sx}{x}}{\pair{u}{z}}=\sip{x}{z}-\sip{Sx}{u}=0,\qquad \forall x\in\dom S,
\end{align*}
which yields $\kispair{u}{z}\in  W\langle G(S)\rangle^{\perp}$. Hence 
\begin{align*}
u\in \overline{\ran S}\cap pr_{\kil}\langle W\langle G(S)^{\perp}\rangle\rangle\subseteq \dom T,
\end{align*}
and therefore 
\begin{equation}\label{E:z-tu}
\sip{x}{z}=\sip{Sx}{z}=\sip{x}{Tu},\qquad \forall x\in\dom S, 
\end{equation}
because $T\wedge S$. Note that \eqref{E:z-tu} simultaneously yields $z-Tu\in(\dom S)^{\perp}$. Consequently, 
\begin{equation*}
z=Tu+(z-Tu)\in \ran T+(\dom S)^{\perp},
\end{equation*}
which completes the proof.
\end{proof}
Since the adjoint of a densely defined linear operator $S$ fulfills 
\begin{equation*}
G(S^*)= W\langle G(S)^{\perp}\rangle,
\end{equation*}
 we just obtain identity 
\begin{equation}\label{E:prkw}
pr_{\kil}\langle W\langle G(S)^{\perp}\rangle\rangle=\dom S^*.
\end{equation}
 As a consequence we retrieve Theorem 1 of \cite{Sebestyen83a} which characterizes the range of the adjoint operator:
\begin{corollary}\label{C:Sebestyen83}
Let $S$ be a  densely defined linear operator between $\hil$ and $\kil$. For $z\in\hil$  the following conditions are equivalent
\begin{enumerate}[\upshape (i)]
 \item $z\in\ran S^*$;
 \item There is $M_z\geq0$ such that 
  \begin{equation*}
     \abs{\sip{x}{z}}\leq M_z\|Sx\|,\qquad \forall x\in\dom S.
  \end{equation*}
\end{enumerate}
\end{corollary}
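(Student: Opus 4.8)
The plan is to derive Corollary~\ref{C:Sebestyen83} as a special case of Theorem~\ref{T:Sebestyen83} by taking $T=S^*$. First I would check that the standing hypothesis of Theorem~\ref{T:Sebestyen83} is automatically satisfied in this situation. Since $S$ is densely defined, the adjoint $S^*$ exists, and $S\wedge S^*$ holds by the very definition of the adjoint, so $S$ and $T:=S^*$ are adjoint to each other. The required inclusion $\overline{\ran S}\cap pr_{\kil}\langle W\langle G(S)^{\perp}\rangle\rangle\subseteq \dom S^*$ follows at once from identity \eqref{E:prkw}, namely $pr_{\kil}\langle W\langle G(S)^{\perp}\rangle\rangle=\dom S^*$, which is recorded just before the corollary; indeed the intersection in question is a subset of $\dom S^*$, so there is nothing more to verify.

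Next I would identify the set appearing in part (i) of Theorem~\ref{T:Sebestyen83} with $\ran S^*$. With $T=S^*$ that set is $\ran S^*+(\dom S)^{\perp}$. Because $S$ is densely defined, $(\dom S)^{\perp}=\{0\}$, so $\ran S^*+(\dom S)^{\perp}=\ran S^*$. Hence condition (i) of the theorem becomes precisely $z\in\ran S^*$, which is condition (i) of the corollary. Condition (ii) of the theorem is literally condition (ii) of the corollary, the Schwarz-type estimate $\abs{\sip{x}{z}}\leq M_z\|Sx\|$ for all $x\in\dom S$, with no change. Therefore the equivalence (i)$\Leftrightarrow$(ii) of the corollary is exactly the equivalence furnished by Theorem~\ref{T:Sebestyen83}.

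I do not expect any genuine obstacle here: the only points requiring a word of justification are the verification of the hypothesis of Theorem~\ref{T:Sebestyen83} (handled by \eqref{E:prkw}) and the two simplifications $S\wedge S^*$ and $(\dom S)^{\perp}=\{0\}$, both of which are immediate from dense definedness. The proof is thus a short specialization argument, and I would write it in two or three sentences, citing Theorem~\ref{T:Sebestyen83} and \eqref{E:prkw}.

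\begin{proof}
Apply Theorem~\ref{T:Sebestyen83} with $T:=S^*$. Since $S$ is densely defined, $S^*$ exists and $S\wedge S^*$ holds by the definition of the adjoint. Moreover, by \eqref{E:prkw} we have $pr_{\kil}\langle W\langle G(S)^{\perp}\rangle\rangle=\dom S^*$, so in particular $\overline{\ran S}\cap pr_{\kil}\langle W\langle G(S)^{\perp}\rangle\rangle\subseteq\dom S^*$, which is the standing hypothesis of Theorem~\ref{T:Sebestyen83}. Finally, $(\dom S)^{\perp}=\{0\}$ because $\dom S$ is dense, hence $\ran S^*+(\dom S)^{\perp}=\ran S^*$. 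Thus condition (i) of Theorem~\ref{T:Sebestyen83} reads $z\in\ran S^*$, while condition (ii) is unchanged, and the asserted equivalence follows.
\end{proof}
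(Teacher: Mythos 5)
Your proof is correct and follows exactly the paper's own route: specializing Theorem~\ref{T:Sebestyen83} to $T=S^*$, using identity \eqref{E:prkw} to verify the standing hypothesis and $(\dom S)^{\perp}=\{0\}$ to collapse $\ran S^*+(\dom S)^{\perp}$ to $\ran S^*$. Your write-up simply spells out the details that the paper's one-line proof leaves implicit.
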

\begin{proof}
The preceding theorem applies with $T=S^*$ because of $(\dom S)^{\perp}=\{0\}$ identity \eqref{E:prkw}.  
\end{proof}
\begin{corollary}
Let $T$ be a densely defined closed linear operator between $\hil$ and $\kil$. For $z\in\kil$ the following conditions are equivalent:
\begin{enumerate}[\upshape (i)]
 \item $z\in\ran T$;
 \item There is $M_z\geq0$ such that 
  \begin{equation*}
     \abs{\sip{x}{z}}\leq M_z\|T^*x\|,\qquad \forall x\in\dom T^*.
  \end{equation*}
\end{enumerate}
\end{corollary}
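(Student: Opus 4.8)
The plan is to deduce this corollary from Corollary \ref{C:Sebestyen83} by the standard duality $T = (T^*)^*$, which holds precisely because $T$ is densely defined and closed. Since $T$ is densely defined, $T^*$ exists; since $T$ is closed, the biadjoint $T^{**}$ equals $T$, and in particular $T^*$ is itself densely defined with $(T^*)^* = T$. Setting $S := T^*$ in Corollary \ref{C:Sebestyen83}, the hypothesis ``$S$ densely defined'' is met, and we get the equivalence of $z \in \ran S^* = \ran T$ with the existence of $M_z \geq 0$ such that $\abs{\sip{x}{z}} \leq M_z\|Sx\| = M_z\|T^*x\|$ for all $x \in \dom S = \dom T^*$. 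That is exactly the claimed equivalence.

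First I would record that $T^*$ is densely defined: this is the classical fact that the adjoint of a densely defined \emph{closable} operator is densely defined, and $T$ being closed is certainly closable. (Alternatively one can invoke the decomposition \eqref{E:decomp} applied to the closed operator $T$: $G(T) \oplus W\langle G(T^*)\rangle = \hil \times \kil$, from which $(\dom T^*)^\perp = \{0\}$ follows because a nonzero $k \perp \dom T^*$ would give $(0,k) \in G(T)^{\perp\perp} = \overline{G(T)} = G(T)$, forcing $k = T0 = 0$.) Next I would note $T^{**} = T$; again this is \cite[Theorem 4.16]{Weidmann} or can be read off from \eqref{E:decomp} together with $W^2 = -I$ and the fact that orthocomplementation is involutive on closed subspaces. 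With these two facts in hand the substitution $S = T^*$, $S^* = T^{**} = T$ into Corollary \ref{C:Sebestyen83} is immediate and finishes the argument.

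There is really no serious obstacle here: the entire content is the observation that a densely defined closed operator is the adjoint of its (automatically densely defined) adjoint, so that the range of $T$ is the range of an adjoint operator and Corollary \ref{C:Sebestyen83} applies verbatim. The only point requiring a word of justification — and thus the ``hardest'' step, such as it is — is the density of $\dom T^*$, which is exactly where the closedness hypothesis on $T$ is used; without it the Schwarz-type inequality in (ii) need not pin down $z$ inside $\ran T$ at all. I would state this cleanly and then simply write: ``Apply Corollary \ref{C:Sebestyen83} to the densely defined operator $S := T^*$, noting $S^* = T^{**} = T$.''
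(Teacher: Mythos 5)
Your argument is correct and is exactly the paper's proof: apply Corollary \ref{C:Sebestyen83} to $S:=T^*$, using that $T$ densely defined and closed gives $T^{**}=T$ (so $\ran T=\ran S^*$). The extra justification you supply for the density of $\dom T^*$ is sound but standard, and the paper simply cites $T=T^{**}$ as immediate from the hypotheses.
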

\begin{proof}
The proof is immediate from Corollary \ref{C:Sebestyen83} because $T=T^{**}$ by assumptions.
\end{proof}
From the Banach closed range theorem it is known that the adjoint $S^*$ of a densely defined closed operator $S$ is of full range if and only if the operator is bounded from below. The next theorem establishes a generalization of that fact for operators which are adjoint to each other.
\begin{theorem}
Let $\hil$, $\kil$ be real or complex Hilbert spaces and let $S:\hil\to\kil$ and $T:\kil\to\hil$ be linear operators which are adjoint to each other. Assume furthermore that $\overline{\ran S}\cap pr_{\kil}\langle W\langle G(S)^{\perp}\rangle\rangle\subseteq \dom T$. Then  the following assertions are equivalent:
\begin{enumerate}[\upshape (i)]
 \item There is $c>0$ such that $\|Sx\|\geq c\|x\|$ for all $x\in\dom S$.
 \item $\ran T+(\dom S)^{\perp}=\hil$.
\end{enumerate}
\end{theorem}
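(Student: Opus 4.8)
The plan is to deduce this equivalence almost directly from Theorem~\ref{T:Sebestyen83}, since the standing hypothesis $\overline{\ran S}\cap pr_{\kil}\langle W\langle G(S)^{\perp}\rangle\rangle\subseteq \dom T$ is precisely the one under which that theorem applies. By Theorem~\ref{T:Sebestyen83}, for a fixed $z\in\hil$ the membership $z\in\ran T+(\dom S)^{\perp}$ is equivalent to the existence of a constant $M_z\ge0$ with $\abs{\sip{x}{z}}\le M_z\|Sx\|$ for every $x\in\dom S$. Hence assertion (ii) --- which is nothing but ``$z\in\ran T+(\dom S)^{\perp}$ for all $z\in\hil$'' --- is equivalent to saying that \emph{every} $z\in\hil$ admits such a Schwarz-type bound, and the whole problem reduces to comparing this pointwise family of inequalities with the single uniform estimate in~(i).

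The implication (i)$\Rightarrow$(ii) is then immediate: if $\|Sx\|\ge c\|x\|$ holds on $\dom S$, then for any $z\in\hil$ the Cauchy--Schwarz inequality gives $\abs{\sip{x}{z}}\le\|x\|\,\|z\|\le c^{-1}\|z\|\,\|Sx\|$, so $M_z:=c^{-1}\|z\|$ does the job and Theorem~\ref{T:Sebestyen83} places $z$ in $\ran T+(\dom S)^{\perp}$.

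For the converse (ii)$\Rightarrow$(i) I would invoke the Banach--Steinhaus theorem. Assuming (ii), Theorem~\ref{T:Sebestyen83} furnishes for each $z\in\hil$ a constant $M_z$ with $\abs{\sip{x}{z}}\le M_z\|Sx\|$ on $\dom S$. Consider on the Hilbert space $\hil$ the family of bounded linear functionals $f_x(z):=\sip{z}{x}$ indexed by those $x\in\dom S$ with $\|Sx\|\le1$; each has norm $\|f_x\|=\|x\|$, and for every fixed $z$ one has $\sup_x\abs{f_x(z)}=\sup_x\abs{\sip{x}{z}}\le M_z<\infty$. Uniform boundedness then yields a constant $C\ge0$ with $\|x\|\le C$ for all $x\in\dom S$ satisfying $\|Sx\|\le1$; applying this to $x/(\|Sx\|+\varepsilon)$ and letting $\varepsilon\downarrow0$ upgrades it to $\|x\|\le C\|Sx\|$ for every $x\in\dom S$. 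If $\dom S=\{0\}$ the conclusion (i) is vacuous; otherwise $C>0$ and (i) holds with $c:=C^{-1}$.

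I expect the only genuine obstacle to be the correct formulation of the Banach--Steinhaus step --- in particular, choosing the index set $\{x\in\dom S:\|Sx\|\le1\}$ and reading off the norms of the functionals $f_x$ --- after which the argument is routine. It is worth noting that, unlike the classical closed-range statement behind it, no closedness or density assumption on $S$ is needed here, because the completeness required for Banach--Steinhaus is supplied for free by $\hil$ being a Hilbert space.
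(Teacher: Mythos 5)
Your proof is correct and takes essentially the same route as the paper: both directions rest on Theorem \ref{T:Sebestyen83} plus the uniform boundedness principle applied (in effect) to the set $\set{x\in\dom S}{\|Sx\|\le 1}$ --- the paper phrases this as weak boundedness of $\set{S^{-1}z}{\|z\|\le 1}$ after first checking $\ker S=\{0\}$, a step your normalization $x/(\|Sx\|+\varepsilon)$ absorbs automatically. The only cosmetic difference is that you obtain the pointwise Schwarz bounds by citing Theorem \ref{T:Sebestyen83} for each $z$, whereas the paper derives them directly from a chosen representation $z=Ty+u$.
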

\begin{proof}
Assume first (i) and consider $z\in\hil.$ For any $x\in \dom S$  we have 
\begin{align*}
\abs{\sip{x}{z}}\leq \|x\|\|z\|\leq c^{-1}\|y\|\|Sx\|,
\end{align*}
hence $z\in\ran T+(\dom S)^{\perp}$, according to Theorem \ref{T:Sebestyen83}. Suppose conversely (ii). Our first claim is to check that  $S$ is one-to-one: for if $x\in\ker S$, then for any $y\in\dom T$ and $u\in(\dom S)^{\perp}$ we have 
\begin{align*}
\sip{x}{Ty+u}=\sip{x}{Ty}=\sip{Sx}{y}=0,
\end{align*}
hence  $x=0$, indeed. 
The inverse $S^{-1}$ of $S$ exists therefore as an operator $\kil\supseteq \ran S\to \hil$. Furthermore, for any $z\in\dom S^{-1}$, $y\in\dom T$ and $u\in(\dom S)^{\perp}$ we have 
\begin{align*}
\abs{\sip{Ty+u}{S^{-1}z}}=\abs{\sip{Ty}{S^{-1}z}}=\abs{\sip{y}{z}}\leq \|z\|\|y\|.
\end{align*}
This in turn shows that the set $\set{S^{-1}z}{z\in\dom S^{-1}, \|z\|\leq 1}$ is weakly bounded and hence also uniformly bounded according to the Banach uniform boundedness principle. That means that there exists $M> 0$ such that $\|S^{-1}z\|\leq M\|z\|$ for all $z\in\dom T^{-1}$, which clear\-ly implies (i).  
\end{proof}
\begin{corollary}
For a densely defined linear operator $S$ between $\hil$ and $\kil$ the following statements are equivalent:
\begin{enumerate}[\upshape (i)]
 \item There is $c>0$ such that $\|Sx\|\geq c\|x\|$ for all $x\in\dom S$. 
 \item $S^*$ is a full range operator, i.e., $\ran S^*=\hil$.
\end{enumerate}
\end{corollary}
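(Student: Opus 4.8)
The plan is to deduce this corollary directly from the preceding theorem by taking $T=S^*$, exactly in the spirit of how Corollary \ref{C:Sebestyen83} was obtained from Theorem \ref{T:Sebestyen83}. First I would note that since $S$ is densely defined, its adjoint $S^*$ exists and satisfies $S\wedge S^*$, so the pair $(S,S^*)$ is eligible for the theorem. Next I would check the standing hypothesis of the theorem, namely $\overline{\ran S}\cap pr_{\kil}\langle W\langle G(S)^{\perp}\rangle\rangle\subseteq \dom S^*$: this holds automatically because the adjoint satisfies $G(S^*)=W\langle G(S)^{\perp}\rangle$, whence by \eqref{E:prkw} we have $pr_{\kil}\langle W\langle G(S)^{\perp}\rangle\rangle=\dom S^*$, so the left-hand side is already contained in $\dom S^*$.

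With the hypotheses in place, the theorem gives the equivalence of the two conditions: ``there is $c>0$ with $\|Sx\|\geq c\|x\|$ for all $x\in\dom S$'' and ``$\ran S^*+(\dom S)^{\perp}=\hil$''. The final observation is that $\dom S$ is dense, so $(\dom S)^{\perp}=\{0\}$, and the second condition collapses to $\ran S^*=\hil$, which is precisely statement (ii) of the corollary. This completes the argument.

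I do not anticipate any genuine obstacle here; the only points requiring a moment's care are the verification of the inclusion hypothesis (handled by \eqref{E:prkw}) and the reduction $(\dom S)^{\perp}=\{0\}$, both of which are immediate consequences of density and the definition of the adjoint. In short, the proof is a one-line specialization of the theorem, and I would present it as such.

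\begin{proof}
Since $S$ is densely defined, $S^*$ exists and $S\wedge S^*$. Moreover, $G(S^*)=W\langle G(S)^{\perp}\rangle$, so by \eqref{E:prkw} we have $pr_{\kil}\langle W\langle G(S)^{\perp}\rangle\rangle=\dom S^*$; in particular $\overline{\ran S}\cap pr_{\kil}\langle W\langle G(S)^{\perp}\rangle\rangle\subseteq \dom S^*$. The preceding theorem, applied with $T=S^*$, therefore yields that (i) is equivalent to $\ran S^*+(\dom S)^{\perp}=\hil$. As $\dom S$ is dense, $(\dom S)^{\perp}=\{0\}$, and this last equality reads $\ran S^*=\hil$, which is (ii).
\end{proof}
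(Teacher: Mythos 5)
Your proof is correct and follows exactly the route the paper takes: the paper's own proof is the one-line remark that the corollary is an immediate consequence of the preceding theorem together with \eqref{E:prkw}, and you have simply spelled out the specialization $T=S^*$, the verification of the inclusion hypothesis via \eqref{E:prkw}, and the reduction $(\dom S)^{\perp}=\{0\}$. No issues.
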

\begin{proof}
This is an immediate consequence of the preceding theorem and  \eqref{E:prkw}.
\end{proof}

\section{Skew-adjoint and selfadjoint operators}
An operator $S$ acting on a real or complex Hilbert space $\hil$ is called symmetric if it is adjoint to itself, i.e., $S$ satisfies  $S\wedge S$:  
\begin{equation*}
\sip{Sh}{h'}=\sip{h}{Sh'}, \qquad  h,h'\in\dom S.
\end{equation*}
A linear operator $S$ in $\hil$ is said to be skew-symmetric if $S\wedge (-S)$, that is 
\begin{align*}
\sip{Sh}{h'}=-\sip{h}{Sh'}, \qquad  h,h'\in\dom S.
\end{align*}
We say that a densely defined operator $S$ is selfadjoint if $S^*=S$ and we call $S$ skew-adjoint  if $S^*=-S$. If the underlying Hilbert space is complex then the mapping $S\mapsto iS$ establishes a bijective correspondence between symmetric and skew-symmetric, and also between selfadjoint and skew-adjoint operators. 

In our first result we are going to provide a characterization of skew-adjoint operators among skew-symmetric ones. This simultaneously extends \cite[Theorem 4.1]{Characterization} and \cite[Theorem 2.1]{Squareselfadj}. We emphasize again that no assumption regarding the density of the domain of $S$ is required and also  the underlying space is allowed to be either real or complex. 
\begin{theorem}\label{T:theorem51}
Let $S$ be a skew-symmetric operator in the Hilbert space $\hil$. The following assertions (i)-(vi) are equivalent:
\begin{enumerate}[\upshape (i)]
\item $S$ is skew-adjoint.
\item $\ran M_{S,-S}=\hil\times\hil$.
\item $G(S)^{\perp}\subseteq \ran M_{S,-S}.$
\item $\ran (I-S^2)=\hil.$
\item $\ran(I\pm S)=\hil.$
\item $S^2$ is densely defined and selfadjoint: $(S^2)^*=S^2$.
\end{enumerate}
\end{theorem}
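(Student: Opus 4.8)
The plan is to exploit that \emph{skew-symmetry of $S$ is precisely the relation $S\wedge(-S)$}, which makes Theorems \ref{T:theorem22} and \ref{T:theorem31} directly applicable to the pair $(S,-S)$. Since $(-S)^{*}=-S^{*}$, the property $S^{*}=-S$ is the same as saying that $S$ and $-S$ are adjoint \emph{of} each other; moreover, for $T=-S$ (with $\kil=\hil$) one has $ST=TS=-S^{2}$, so the two range conditions in Theorem \ref{T:theorem31}(iii) both read $\ran(I-S^{2})=\hil$, and $M_{S,-S}$ is exactly the matrix occurring in conditions (ii)--(iii). Thus Theorem \ref{T:theorem31}, applied to $(S,-S)$, yields at once the equivalence of (i), (ii) and (iv); and Theorem \ref{T:theorem22}, applied to $(S,-S)$, yields the equivalence of (i) and (iii) (observe that (iii) is only the \emph{one-sided} inclusion $G(S)^{\perp}\subseteq\ran M_{S,-S}$, but by Theorem \ref{T:theorem22} this single inclusion, together with $S\wedge(-S)$, already forces $S$ to be densely defined with $S^{*}=-S$).

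It then remains to bring (v) and (vi) into the circle. For (iv)$\Leftrightarrow$(v) I would argue elementarily, using the factorisations
\begin{equation*}
(I-S)(I+S)h=(I+S)(I-S)h=h-S^{2}h,\qquad h\in\dom S^{2}.
\end{equation*}
If $\ran(I\pm S)=\hil$, then given $y\in\hil$ choose $g\in\dom S$ with $(I-S)g=y$ and then $h\in\dom S$ with $(I+S)h=g$; since $Sh=g-h\in\dom S$ we get $h\in\dom S^{2}$ and $(I-S^{2})h=(I-S)g=y$, so $\ran(I-S^{2})=\hil$. Conversely, if $(I-S^{2})h=y$ with $h\in\dom S^{2}$, then $(I\mp S)h\in\dom S$ and $y=(I\pm S)\bigl((I\mp S)h\bigr)$, exhibiting $y$ in both $\ran(I+S)$ and $\ran(I-S)$.

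For (iv)$\Leftrightarrow$(vi) the starting point is that $S^{2}$ is symmetric (indeed $\sip{S^{2}x}{y}=-\sip{Sx}{Sy}=\sip{x}{S^{2}y}$), together with the identities, valid on $\dom S^{2}$,
\begin{equation*}
\sip{(I-S^{2})x}{x}=\|x\|^{2}+\|Sx\|^{2},\qquad \|(I-S^{2})x\|^{2}=\|x\|^{2}+2\|Sx\|^{2}+\|S^{2}x\|^{2}\ge\|x\|^{2}.
\end{equation*}
Assume (iv). Then $I-S^{2}$ is symmetric, bounded below by $1$ and surjective, hence injective with an everywhere-defined bounded inverse $B=(I-S^{2})^{-1}$; a one-line computation shows $B$ is symmetric, hence selfadjoint, and since $B$ is injective its inverse $I-S^{2}=B^{-1}$ is selfadjoint as well. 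Moreover $\dom S^{2}=\ran B$ is dense (indeed $\overline{\ran B}=(\ker B)^{\perp}=\hil$), so $S^{2}=I-(I-S^{2})$ is densely defined and selfadjoint, which is (vi). Conversely, if $S^{2}$ is selfadjoint then $I-S^{2}$ is selfadjoint and, by the identity above, bounded below by $1$; a selfadjoint operator bounded below by $1$ is injective with closed and dense range, hence surjective, so $\ran(I-S^{2})=\hil$, i.e.\ (iv).

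The only genuinely delicate step is (iv)$\Rightarrow$(vi): in a \emph{complex} Hilbert space one could simply invoke that $iS$ is selfadjoint and that the square of a selfadjoint operator is selfadjoint, but since the space may be real I would instead extract selfadjointness (and, what is subtler, dense definedness) of $S^{2}$ from the bounded inverse of $I-S^{2}$ as above. Everything else is routine bookkeeping with domains.
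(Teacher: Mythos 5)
Your proof is correct and follows essentially the same route as the paper's: (i)--(iii) are handled by applying Theorems \ref{T:theorem22} and \ref{T:theorem31} to the pair $(S,-S)$, (iv)$\Leftrightarrow$(v) by the factorisation $(I+S)(I-S)=I-S^{2}=(I-S)(I+S)$, and (vi) by the principle that the positive symmetric operator $-S^{2}$ is selfadjoint exactly when $\ran(I-S^{2})=\hil$. The only difference is that where the paper cites \cite{Characterization} for that last principle, you prove it directly (and correctly) via the everywhere-defined bounded symmetric inverse of $I-S^{2}$, which is a legitimate self-contained substitute valid in real as well as complex spaces.
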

\begin{proof}
The equivalence of (i), (ii) and (iii) is a simple consequence of Theorem \ref{T:theorem31} (choose $T=-S$). Observe on the other hand that
\begin{align*}
M_{S,-S}\cdot M_{-S,S}=M_{-S,S}\cdot M_{S,-S}={\left(\begin{array}{cc}\!\! I-S^2 &  0\!\!\\ \!\! 0& I-S^2\!\!\end{array}\right)}
\end{align*}
due to skew-symmetry, hence assertions (ii) and (iv) are also equivalent. The equivalence between (iv) and (v) is due to formula
\begin{equation*}
(I+S)(I-S)=I-S^2=(I-S)(I+S).
\end{equation*}
Finally, the (not necessarily densely defined) positive symmetric operator $A:=-S^2$ is selfadjoint if and only if $\ran (I+A)=\hil$ (see e.g. \cite[Proposition 3.1]{Characterization}), which proves the equivalence of (iv) and (vi).
\end{proof}

A classical result of von Neumann \cite{vonNeumann1930} says that a densely defined closed symmetric operator $S$ on a complex Hilbert space $\hil$ is selfadjoint (i.e., $S=S^*$) if and only if $\ran(iI\pm S)=\hil$. In the next result we are going to improve this statement: 
\begin{theorem}
Let $\hil$ be a complex Hilbert space and $S$ be a linear operator in $\hil$. Then the following statements are equivalent: 
\begin{enumerate}[\upshape (i)]
\item $S$ is selfadjoint (i.e., $S$ is densely defined and $S^*=S$).
\item $S$ is symmetric and $\ran(iI\pm S)=\hil$.
\end{enumerate}
\end{theorem}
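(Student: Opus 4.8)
The plan is to reduce the statement to Theorem \ref{T:theorem51} by passing from $S$ to $iS$, which is available since $\hil$ is complex. First I would invoke the bijective correspondence recalled at the beginning of this section: the map $S\mapsto iS$ carries symmetric operators onto skew-symmetric ones and selfadjoint operators onto skew-adjoint ones (with $\dom(iS)=\dom S$, and with the conjugation accounting for $(iS)^*=-iS^*$). In particular, $S$ is selfadjoint if and only if $iS$ is skew-adjoint, and $S$ is symmetric if and only if $iS$ is skew-symmetric.

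Next I would translate the range conditions. From the identities $I+iS=i(S-iI)$ and $I-iS=-i(S+iI)$, together with the fact that multiplication by the unimodular scalar $\pm i$ is a bijection of $\hil$ onto itself, it follows that $\ran(I+iS)$ and $\ran(I-iS)$ are both equal to $\hil$ precisely when $\ran(iI-S)$ and $\ran(iI+S)$ are both equal to $\hil$. In short, $\ran(I\pm iS)=\hil$ is equivalent to $\ran(iI\pm S)=\hil$.

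With these two remarks the equivalence is immediate. If $S$ is selfadjoint, then $S$ is symmetric and $iS$ is skew-symmetric and skew-adjoint, so the implication (i)$\Rightarrow$(v) of Theorem \ref{T:theorem51}, applied to $iS$, gives $\ran(I\pm iS)=\hil$ and hence $\ran(iI\pm S)=\hil$; together with symmetry of $S$ this is (ii). Conversely, if $S$ is symmetric with $\ran(iI\pm S)=\hil$, then $iS$ is skew-symmetric with $\ran(I\pm iS)=\hil$, so the implication (v)$\Rightarrow$(i) of Theorem \ref{T:theorem51} yields that $iS$ is skew-adjoint, whence $S$ is selfadjoint. This gives (i).

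I do not anticipate a real obstacle, since all the substance has already been absorbed into Theorem \ref{T:theorem51} (which itself does not presuppose density of $\dom S$ --- density is deduced from the surjectivity hypotheses through Theorem \ref{T:theorem31}). The only things demanding attention are the bookkeeping with the factors of $i$ and conjugation when moving between $S$ and $iS$, and the discipline to cite exactly clauses (i) and (v) of Theorem \ref{T:theorem51}: the remaining clauses, such as (vi) involving $(S^2)^*$, would uselessly bring the square of the operator back into play.
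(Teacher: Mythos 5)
Your proposal is correct and follows exactly the paper's own route: reduce to Theorem \ref{T:theorem51} via the correspondence $S\mapsto iS$ between symmetric/selfadjoint and skew-symmetric/skew-adjoint operators. Your explicit translation of the range conditions $\ran(I\pm iS)=\hil\Leftrightarrow\ran(iI\pm S)=\hil$ is a detail the paper leaves implicit, but the argument is the same.
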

\begin{proof}
The theorem follows immediately from Theorem \ref{T:theorem51} by observing that $S$ is selfadjoint if and only if $iS$ is skew-adjoint and similarly, $S$ is symmetric if and only if $iS$ is skew-symmetric.
\end{proof}
On full generality we have the following result (cf. also \cite[Corollary 3.6]{Popovici}, \cite[Theorem 5.1]{Characterization} and \cite[Theorem 2.2]{Squareselfadj}):
\begin{theorem}\label{T:theorem53}
For a symmetric operator $S$ in a real or complex Hilbert space $\hil$ the following assertions are equivalent:
\begin{enumerate}[\upshape (i)]
\item $S$ is selfadjoint.
\item $\ran M_{S,S}=\hil\times\hil$.
\item $G(S)^{\perp}\subseteq \ran M_{S,S}.$
\item $\ran (I+S^2)=\hil.$
\item $S^2$ is densely defined and selfadjoint: $(S^2)^*=S^2$.
\end{enumerate}
\end{theorem}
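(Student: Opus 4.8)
The plan is to mirror the structure of Theorem \ref{T:theorem51} as closely as possible, treating the symmetric case instead of the skew-symmetric one. The chain I aim to establish is (i)$\Leftrightarrow$(ii)$\Leftrightarrow$(iii) from Theorem \ref{T:theorem31}, then (ii)$\Leftrightarrow$(iv) via an explicit factorization of the operator matrix, and finally (iv)$\Leftrightarrow$(v) by the characterization of selfadjointness of a positive symmetric operator through surjectivity of $I+A$. Since $S$ is symmetric we have $S\wedge S$ automatically, so Theorem \ref{T:theorem31} (with $T=S$) applies directly: $S$ is selfadjoint, i.e.\ $S=S^*$ and $S^*=S$, if and only if $\ran M_{S,S}=\hil\times\hil$, if and only if $G(S)^{\perp}\subseteq\ran M_{S,S}$ together with $G(S)^\perp\subseteq\ran M_{S,S}$ (the two conditions in Theorem \ref{T:theorem31}(iv) coincide here because $T=S$). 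This settles (i)$\Leftrightarrow$(ii)$\Leftrightarrow$(iii) with essentially no work beyond invoking the earlier theorem.

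For (ii)$\Leftrightarrow$(iv) I would compute the product $M_{S,S}\cdot M_{-S,-S}$ on its natural domain. Using symmetry of $S$, a direct calculation should give
\begin{align*}
M_{S,S}\cdot M_{-S,-S}=M_{-S,-S}\cdot M_{S,S}=\kismatrix{I+S^2}{0}{0}{I+S^2},
\end{align*}
analogous to the formula in the proof of Theorem \ref{T:theorem51}, but with $+S^2$ in place of $-S^2$ because now $S^2$ and $-S^2$ appear with opposite signs relative to the skew case. If $\ran M_{S,S}=\hil\times\hil$, then first one checks $\ran M_{-S,-S}=\hil\times\hil$ as well (the same trick as in Theorem \ref{T:theorem31}: from $M_{S,S}(h,k)=(u,v)$ one reads off $M_{-S,-S}(-h,k)=(-u,v)$, or more simply $M_{-S,-S}$ is, up to a sign change in one coordinate, $M_{S,S}$), and then the factorization forces $\ran(I+S^2)=\hil$. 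Conversely, if $\ran(I+S^2)=\hil$ then the factorization shows $M_{S,S}$ is surjective, since a product of maps with range $\hil$ on each block forces each factor on the left to be surjective. One minor subtlety to attend to: the domain of the composite $M_{S,S}M_{-S,-S}$ is contained in $\dom S^2\times\dom S^2$, so the factorization really says $\ran(I+S^2)\times\ran(I+S^2)\subseteq\ran M_{S,S}$, which combined with $G(S)^\perp\subseteq\ran M_{S,S}$ (obtained as in the skew case once $I+S^2$ is surjective, since the complementary part comes from $M_{S,S}(0,k)=(-Sk,k)$ lying in $W\langle G(S)\rangle$) yields all of $\hil\times\hil$.

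For (iv)$\Leftrightarrow$(v) I would set $A:=S^2$, which is a (not necessarily densely defined) positive symmetric operator since $\sip{S^2h}{h}=\sip{Sh}{Sh}\ge 0$ and $\sip{S^2h}{h'}=\sip{Sh}{Sh'}=\sip{h}{S^2h'}$. By the cited result \cite[Proposition 3.1]{Characterization}, such an $A$ is densely defined and selfadjoint precisely when $\ran(I+A)=\hil$, which is exactly the equivalence of (iv) and (v). The main obstacle, as in the analogous theorem, is not conceptual but bookkeeping: verifying the matrix product identity carefully on the correct domains (making sure that symmetry is used at the right place and that no density hypothesis sneaks in), and confirming that the surjectivity of $I+S^2$ genuinely propagates to full range of $M_{S,S}$ rather than merely to a dense or large subspace. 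Once the factorization formula is in hand, everything else is a direct transcription of the skew-adjoint argument with signs flipped.
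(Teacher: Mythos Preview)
Your proposal is correct and follows essentially the same approach as the paper: invoke Theorem \ref{T:theorem31} with $T=S$ for (i)$\Leftrightarrow$(ii)$\Leftrightarrow$(iii), use the factorization $M_{S,S}\cdot M_{-S,-S}=M_{-S,-S}\cdot M_{S,S}=\kismatrix{I+S^2}{0}{0}{I+S^2}$ for (ii)$\Leftrightarrow$(iv), and appeal to the positive-symmetric characterization for (iv)$\Leftrightarrow$(v). Your extra caution about domains and the propagation of surjectivity is more detailed than the paper's terse ``we just conclude,'' but the underlying argument is identical.
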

\begin{proof}
The proof of the statement is very similar to that of Theorem \ref{T:theorem51}: equivalence of (i), (ii) and (iii) follows immediately from Theorem \ref{T:theorem31}. From formula 
\begin{align*}
M_{S,S}\cdot M_{-S,-S}= M_{-S,-S}\cdot M_{S,S}={\left(\begin{array}{cc}\!\! I+S^2 &  0\!\!\\ \!\! 0& I+S^2\!\!\end{array}\right)}
\end{align*}
we just conclude the equivalence of (ii) and (iv). Finally, the missing equivalence (iv)$\Leftrightarrow$(v) follows from the observation  that $S^2$ is a positive symmetric operator.
\end{proof}

As an immediate, and somewhat surprising consequence we obtain that any symmetric square root of a positive selfadjoint operator itself is selfadjoint:
\begin{corollary}
Let $A$ be a positive selfadjoint operator in the real or complex Hilbert space $\hil$. If $B$ is a symmetric operator such that $B^2=A$ then $B$ is selfadjoint.
\end{corollary}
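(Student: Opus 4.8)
The plan is to invoke Theorem \ref{T:theorem53}, which reduces selfadjointness of a symmetric operator $B$ to the single equation $\ran(I+B^2)=\hil$. Since $B$ is symmetric by hypothesis, we only need to verify this range condition, and here is where the assumption $B^2=A$ enters decisively: because $A$ is positive \emph{selfadjoint}, the operator $I+A$ is surjective (indeed $I+A$ maps $\dom A$ bijectively onto $\hil$, this being one of the standard characterizations of positive selfadjointness, cf. the reference to \cite[Proposition 3.1]{Characterization} already cited in the proof of Theorem \ref{T:theorem51}). Hence $\ran(I+B^2)=\ran(I+A)=\hil$, and Theorem \ref{T:theorem53} (the implication (iv)$\Rightarrow$(i)) immediately gives that $B$ is selfadjoint.

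So the skeleton is: first note $B\wedge B$ holds by assumption (so Theorem \ref{T:theorem53} is applicable to $B$); second, observe $I+B^2=I+A$ as operators — here one should be slightly careful that the domains match, but $\dom B^2=\dom A$ is forced by $B^2=A$ as operators; third, cite surjectivity of $I+A$ from positive selfadjointness of $A$; fourth, conclude via Theorem \ref{T:theorem53}. No density hypothesis on $B$ is needed as input, since Theorem \ref{T:theorem53} delivers density of $\dom B$ as part of its conclusion — this is exactly the point worth emphasizing, as it is what makes the corollary "somewhat surprising."

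The only genuinely delicate point — and the one I would treat as the main obstacle — is making sure that "$B^2=A$" is being read as an equality of operators including equality of domains, so that $\ran(I+B^2)=\ran(I+A)$ is literally correct rather than merely an inclusion. If one only assumed $B^2\subseteq A$ the argument would break, since then $\ran(I+B^2)$ could be a proper subset of $\hil$. Granting the intended reading, the proof is essentially two lines: $\ran(I+B^2)=\ran(I+A)=\hil$, hence $B^2$ — equivalently $B$ — is selfadjoint by Theorem \ref{T:theorem53}.

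\begin{proof}
Since $B^2=A$ and $A$ is positive selfadjoint, the operator $I+B^2=I+A$ maps $\dom A$ onto the whole of $\hil$; in particular $\ran(I+B^2)=\hil$. As $B$ is symmetric by hypothesis, Theorem \ref{T:theorem53} applies to $S=B$, and the implication (iv)$\Rightarrow$(i) therein shows that $B$ is densely defined and $B^*=B$, i.e.\ $B$ is selfadjoint.
\end{proof}
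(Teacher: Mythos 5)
Your proof is correct and follows essentially the same route as the paper, whose own proof is simply ``Straightforward from Theorem~\ref{T:theorem53}''; you instantiate this via the implication (iv)$\Rightarrow$(i) together with the surjectivity of $I+A$, while the even shorter path is (v)$\Rightarrow$(i), since $B^2=A$ being densely defined and selfadjoint is itself one of the equivalent conditions. Your remark that $B^2=A$ must be read as equality of operators including domains (not merely $B^2\subseteq A$) is a worthwhile precision.
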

\begin{proof}
Straightforward from Theorem \ref{T:theorem53}.
\end{proof}
A characterization of closed range selfadjoint operators is established in the next result:
\begin{corollary}
Let $S$ be a (not necessarily densely defined or closed) symmetric operator in the real or complex Hilbert space $\hil$ such that  $$\ker S+\ran S=\hil.$$ Then $S$ is a (densely defined and) selfadjoint operator with closed range.
\end{corollary}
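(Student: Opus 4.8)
The plan is to reduce this corollary to Theorem~\ref{T:theorem53} by verifying its hypothesis (iv), namely $\ran(I+S^2)=\hil$. The hypothesis $\ker S+\ran S=\hil$ is the exact analogue, for the case $T=S$, of the hypothesis appearing in Theorem~\ref{T:kerSranT}; indeed, since $S$ is symmetric we have $S\wedge S$, and both conditions $\ker S+\ran S=\hil$ and $\ran S+\ker S=\hil$ coincide. So the quickest route is simply to invoke Theorem~\ref{T:kerSranT} with $T=S$: it yields at once that $S$ is densely defined, has closed range, and satisfies $S^*=S$, which is everything claimed. I would present the proof essentially in that one line, perhaps after the remark that symmetry gives $S\wedge S$ so the theorem applies verbatim.

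If one prefers a self-contained argument through Theorem~\ref{T:theorem53}, the key steps would be as follows. First, as in the proof of Theorem~\ref{T:kerSranT}, establish the orthogonality relation $(\ran S)^{\perp}=\ker S$: if $z\perp\ran S$, write $z=k+Sh$ with $k\in\ker S$, $h\in\dom S$, and compute $0=\sipt{z}{Sh}=\sipt{k}{Sh}+\sipt{Sh}{Sh}=\sipt{Sh}{h}\cdot 0+\|Sh\|^2$... more carefully, $0=\sip{z}{Sh}=\sip{k}{Sh}+\sip{Sh}{Sh}=\sip{Sk}{h}+\|Sh\|^2=\|Sh\|^2$, whence $z=k\in\ker S$; the reverse inclusion is immediate from symmetry. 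Then, given $w\in\hil$, use $\ker S+\ran S=\hil$ to write $w=h+Sx$ with $h\in\ker S$ and $x\in\dom S$. Since $h\in\ker S\subseteq\dom S$ and $Sx\in\dom S$ (because $x\in\dom S^2$ would be needed — here is the subtlety), one wants to realize $w$ as $(I+S^2)y$ for some $y\in\dom S^2$. The natural candidate is $y=x$ together with an adjustment by a kernel element; the point is that $S^2x=S(Sx)$ must make sense, so one should instead start from the decomposition applied cleverly.

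The cleanest path avoids this bookkeeping: having $(\ran S)^{\perp}=\ker S$ and $\ker S+\ran S=\hil$ forces the sum to be an algebraic direct sum with $\ran S$ closed, hence $\hil=\ker S\oplus\overline{\ran S}$ orthogonally and $\ran S=\overline{\ran S}$. Now decompose $w\in\hil$ as $w=h_0+Sx$ with $h_0\in\ker S$ and $x\in\dom S\cap\overline{\ran S}$. One still needs $Sx\in\ran S=\dom$-range so that $S(Sx)$ can be solved; applying the decomposition once more to $Sx$ gives $Sx=S x'$ with $x'\in\dom S$, and then $y:=x'+h_0$ satisfies $y\in\dom S^2$ with $(I+S^2)y=\dots$. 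The hard part, and the only place real care is required, is precisely this verification that the element produced genuinely lies in $\dom S^2$ and that $I+S^2$ sends it to $w$; after that, Theorem~\ref{T:theorem53}(iv)$\Rightarrow$(i) gives selfadjointness, and closed range was already obtained along the way. Given that Theorem~\ref{T:kerSranT} already packages exactly this argument, I would in the final text simply cite it.
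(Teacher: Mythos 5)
Your main proof --- invoking Theorem~\ref{T:kerSranT} with $T=S$, noting that symmetry is exactly $S\wedge S$ and that the two kernel-plus-range hypotheses coincide in this case --- is correct and is in substance identical to the paper's own argument, which simply re-runs the computation from the proof of that theorem (establishing $(\ran S)^{\perp}=\ker S$ and then $G(S)^{\perp}\subseteq\ran M_{S,S}$) before concluding via Theorem~\ref{T:theorem53}. Your secondary sketch through $\ran(I+S^2)=\hil$ is left incomplete and is not needed, but since you explicitly defer to the citation of Theorem~\ref{T:kerSranT} in the final write-up, the proposal stands as is.
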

\begin{proof}
First we remark that $\ran S^{\perp}=\ker S$. Indeed, for if $z\in\ran S^{\perp}$ then $z=k+Sh$ for suitable $k\in\ker S$ and $h\in\dom S$, so
\begin{align*}
0=\sip{z}{Sh}=\sip{k+Sh}{Sh}=\sip{Sk}{h}+\sip{Sh}{Sh}=\sip{Sh}{Sh},
\end{align*}
whence it follows that $z=k\in\ker S$, i.e.,  $\ran S^{\perp}\subseteq\ker S$.
The converse inclusion is straightforward. To see that $S$ is selfadjoint we prove that $G(S)^{\perp}\subseteq\ran M_{S,S}$: for let $(v,w)\in G(S)^{\perp}$. Choose $h$ from $\dom S$ and $k$ from $\ker S$ such that $v=k+Sh$. Then, as $(k,0)\in G(S)$, we infer that 
\begin{align*}
0=\sip{(v,w)}{(k,0)}=\sip{Sh}{k}+\sip{k}{k}=\sip{k}{k},
\end{align*}
whence we get $v=Sh$. On the other hand, 
\begin{align*}
0=\sip{(Sh,w)}{(x,Sx)}=\sip{Sh}{x}+\sip{w}{Sx}=\sip{h+w}{Sx}
\end{align*}
for all $x$ in $\dom S$, whence $h+w\in\ran S^{\perp}=\ker S$. Hence $w\in\dom S$ and $-Sw=Sh=v$. Therefore
\begin{align*}
(v,w)=(-Sw,w)=M_{S,S}(0,w),
\end{align*}
which gives $(v,w)\in\ran M_{S,S}$. An application of Theorem \ref{T:theorem53} implies that $S$ is selfadjoint. 
\end{proof}
As a straightforward consequence we also receive a generalization of \cite[Exercise 10.4]{Weidmann} by Weidmann:
\begin{corollary}
Let $S$ be a symmetric operator in a real or complex Hilbert space $\hil$ such that $\ker(S+\lambda I)+\ran(S+\lambda I)=\hil$ for some real $\lambda$. Then $S$ is densely defined and  selfadjoint.
\end{corollary}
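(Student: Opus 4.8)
The plan is to reduce the statement to the preceding corollary (the one assuming $\ker S + \ran S = \hil$) by replacing $S$ with the shifted operator $S_\lambda := S + \lambda I$ and checking that $S_\lambda$ inherits all the required hypotheses. First I would observe that since $\lambda$ is real, $S_\lambda$ is symmetric whenever $S$ is: for $h, h' \in \dom S_\lambda = \dom S$ one computes $\sip{S_\lambda h}{h'} = \sip{Sh}{h'} + \lambda \sip{h}{h'} = \sip{h}{Sh'} + \lambda \sip{h}{h'} = \sip{h}{S_\lambda h'}$, using $\bar\lambda = \lambda$. Next, the hypothesis is precisely $\ker S_\lambda + \ran S_\lambda = \hil$, so the preceding corollary applies verbatim to $S_\lambda$ and yields that $S_\lambda$ is densely defined and selfadjoint.

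The remaining step is to transfer these two conclusions back to $S$. Density of $\dom S$ is immediate since $\dom S = \dom S_\lambda$. For selfadjointness, I would use that the adjoint interacts with bounded perturbations in the expected way: since $\lambda I$ is everywhere-defined and bounded, $(S_\lambda)^* = (S + \lambda I)^* = S^* + \lambda I$ (this is the classical fact, available e.g. from Theorem~\ref{T:R+S} or standard references, using that one summand is bounded). Hence $S^* + \lambda I = (S_\lambda)^* = S_\lambda = S + \lambda I$, and subtracting the bounded operator $\lambda I$ gives $S^* = S$, i.e. $S$ is selfadjoint.

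I do not anticipate a genuine obstacle here; the only point requiring a moment of care is the identity $(S + \lambda I)^* = S^* + \lambda I$, which must be invoked rather than re-derived, and the observation that $\bar\lambda = \lambda$ is what makes $S + \lambda I$ symmetric (for complex $\lambda$ it would instead be $(S + \lambda I)^* = S^* + \bar\lambda I$, and symmetry would fail). Everything else is bookkeeping: the domains of $S$ and $S + \lambda I$ coincide, the kernel/range hypothesis is stated directly for $S + \lambda I$, and the cited corollary does all the real work.

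\begin{proof}
Set $\lambda\in\dupR$ and consider the shifted operator $S_\lambda:=S+\lambda I$, which has $\dom S_\lambda=\dom S$. Since $\lambda$ is real, $S_\lambda$ is symmetric: for $h,h'\in\dom S$,
\begin{align*}
\sip{S_\lambda h}{h'}=\sip{Sh}{h'}+\lambda\sip{h}{h'}=\sip{h}{Sh'}+\lambda\sip{h}{h'}=\sip{h}{S_\lambda h'}.
\end{align*}
By hypothesis $\ker S_\lambda+\ran S_\lambda=\hil$, so the preceding corollary applies to $S_\lambda$ and shows that $S_\lambda$ is densely defined and selfadjoint. In particular $\dom S=\dom S_\lambda$ is dense. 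Finally, since $\lambda I$ is everywhere defined and bounded, $(S_\lambda)^*=S^*+\lambda I$, hence $S^*+\lambda I=(S_\lambda)^*=S_\lambda=S+\lambda I$, and subtracting $\lambda I$ yields $S^*=S$. Thus $S$ is densely defined and selfadjoint.
\end{proof}
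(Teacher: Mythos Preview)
Your proposal is correct and matches the paper's intent: the paper states this corollary immediately after the one assuming $\ker S+\ran S=\hil$ and offers no separate proof, treating it as a straightforward consequence. Your reduction via the shift $S_\lambda=S+\lambda I$, using that $\lambda\in\dupR$ keeps $S_\lambda$ symmetric and that $(S+\lambda I)^*=S^*+\lambda I$ for bounded $\lambda I$, is exactly the intended argument.
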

We close the section with a ``dual" version of the classical Hellinger--Toeplitz theorem that improves  \cite[Theorem 2.9]{Stone} of M. H. Stone: 
\begin{corollary}\label{C:corollary67}
A symmetric operator with full range is automatically den\-sely defined and selfadjoint possessing a bounded inverse.
\end{corollary}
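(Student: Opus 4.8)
The plan is to deduce Corollary \ref{C:corollary67} from Theorem \ref{T:kerSranT} (the ``dual Hellinger--Toeplitz'' result for operators adjoint to each other) together with the generalized Hellinger--Toeplitz theorem quoted just before that theorem. First I would observe that a symmetric operator $S$ is adjoint to itself, so we may apply Theorem \ref{T:kerSranT} with $T=S$: the hypotheses $\ker S+\ran S=\hil$ and $\ran S+\ker S=\hil$ reduce to the single assumption $\ran S=\hil$, which holds since $S$ has full range. Hence Theorem \ref{T:kerSranT} immediately yields that $S$ is densely defined, has closed range, and satisfies $S^*=S$, i.e.\ $S$ is selfadjoint.

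It remains to produce the bounded inverse. First note $\ker S=(\ran S)^{\perp}=\hil^{\perp}=\{0\}$ (this also follows from the proof of Theorem \ref{T:kerSranT}), so $S^{-1}$ exists as an everywhere-defined linear operator $\hil\to\hil$, with $\ran S^{-1}=\dom S$. The natural route is then to check $S^{-1}\wedge S^{-1}$: for $u=Sx$ and $w=Sy$ in $\hil$ one computes
\begin{align*}
\sip{S^{-1}u}{w}=\sip{x}{Sy}=\sip{Sx}{y}=\sip{u}{S^{-1}w},
\end{align*}
using that $S$ is symmetric. Thus $S^{-1}$ is an everywhere-defined symmetric operator, and by the (classical) generalized Hellinger--Toeplitz theorem recalled before Theorem \ref{T:kerSranT}, it is bounded and selfadjoint. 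This gives the bounded inverse of $S$ and completes the proof.

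The only mild subtlety is making sure the densely-definedness of $S$ (needed to speak of $S^*$ at all, and implicitly needed for $S^{-1}$ to be densely defined so that one could alternatively invoke boundedness of its adjoint) is genuinely supplied rather than assumed; but this is exactly what Theorem \ref{T:kerSranT} delivers from the range condition alone, so there is no real obstacle. An entirely alternative and equally short argument would be to apply the ``dual'' full-range Hellinger--Toeplitz theorem stated in Section 3 directly with $S=T$, which already packages $S\wedge S$ plus full range into the conclusion that $S$ has an everywhere-defined bounded inverse and $S^*=S$; I would likely present the proof via Theorem \ref{T:kerSranT} for uniformity with the surrounding corollaries, but mention this shortcut.
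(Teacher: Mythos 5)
Your proof is correct and follows essentially the same route as the paper: selfadjointness is obtained from the kernel-plus-range condition (you cite the general Theorem~\ref{T:kerSranT} with $T=S$, the paper its one-operator specialization in the corollary immediately preceding), and boundedness of the everywhere-defined inverse is then extracted from an ``everywhere defined symmetric implies bounded'' principle (you via Hellinger--Toeplitz, the paper via selfadjoint-hence-closed plus the closed graph theorem, which is the same argument). The shortcut you mention through the full-range dual Hellinger--Toeplitz theorem of Section~3 with $S=T$ is also valid.
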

\begin{proof}
A surjective  symmetric operator $S$ obviously fulfills the conditions of the preceding corollary, hence $S$ must be selfadjoint. Since $S$ has trivial kernel, $S^{-1}$ exists as an everywhere defined selfadjoint, hence closed operator. The closed graph theorem forces $S^{-1}$ to be bounded. 
\end{proof}
\section{Positive selfadjoint operators}
A not necessarily densely defined linear operator $A$ acting in a real or complex Hilbert space $\hil$ is called positive if it satisfies
\begin{equation*}
\sip{Ah}{h}\geq0,\qquad \mbox{for all $h\in\dom A$.}
\end{equation*}
A positive operator in a complex Hilbert space is automatically symmetric but this is not the case on real Hilbert spaces. To begin with we offer a  characterization of positive selfadjoint operators (see also \cite[Proposition 3.1]{Characterization}). 
\begin{theorem}\label{T:theorem55}
Let $A$ be a positive linear operator in a real or complex Hilbert space $\hil$. Then the following assertions are equivalent:
\begin{enumerate}[\upshape (i)]
 \item $A$ is (densely defined and) selfadjoint.
 \item $A$ is symmetric (i.e., $A\wedge A$) and $\ran (I+A)=\hil$.
\end{enumerate}
\end{theorem}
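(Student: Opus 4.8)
The implication (i)$\Rightarrow$(ii) is essentially immediate: if $A$ is densely defined and selfadjoint then in particular $A\wedge A$, and since $A$ is positive the operator $I+A$ is selfadjoint with $\sip{(I+A)h}{h}\geq\|h\|^2$ for all $h\in\dom A$; such an operator is bounded below, hence injective with closed range, and being selfadjoint it has dense range, so $\ran(I+A)=\hil$. The substantive direction is (ii)$\Rightarrow$(i), and my plan is to deduce it from Theorem \ref{T:theorem53} by checking the inclusion $G(A)^{\perp}\subseteq\ran M_{A,A}$, which is condition (iii) there. (Note that on a real Hilbert space $A$ need not be symmetric merely from positivity, so the hypothesis $A\wedge A$ in (ii) is genuinely used.)

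So assume (ii) and take $(v,w)\in G(A)^{\perp}$, meaning $\sip{v}{x}+\sip{w}{Ax}=0$ for every $x\in\dom A$. The key move is to use surjectivity of $I+A$: write $v=g+Ag$ for some $g\in\dom A$. Testing the orthogonality relation against $x=g$ and exploiting symmetry $\sip{w}{Ag}=\sip{Aw\text{-type rearrangement}}$ — more precisely, combining $\sip{v}{g}+\sip{w}{Ag}=0$ with $v=g+Ag$ and $A\wedge A$ — I expect to be able to force the ``extra'' piece to vanish and conclude that $v$ in fact lies in a form that pairs correctly with $w$. The cleanest route is likely: from $\sip{v}{x}=-\sip{w}{Ax}$ for all $x\in\dom A$ and the Schwarz-type/positivity estimate coming from $A\geq0$, show first that the functional $Ax\mapsto\sip{v}{x}$ extends continuously, identify a Riesz vector, use surjectivity of $I+A$ to realize it inside $\dom A$, and then read off that $w\in\dom A$ with $-Aw=v$, i.e. $(v,w)=M_{A,A}(0,w)\in\ran M_{A,A}$. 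This mirrors the argument already carried out for the closed-range corollary following Theorem \ref{T:theorem53}, with the role of ``$\ker S+\ran S=\hil$'' replaced here by ``$\ran(I+A)=\hil$''.

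Once $G(A)^{\perp}\subseteq\ran M_{A,A}$ is established, Theorem \ref{T:theorem53} applies directly (its hypothesis is exactly that $A$ be symmetric, which is part of (ii)) and yields that $A$ is densely defined and selfadjoint, which is (i). The main obstacle I anticipate is the bookkeeping in the middle step: squeezing out, from the single scalar identity defining $G(A)^{\perp}$ together with positivity and the decomposition $v=g+Ag$, the conclusion that $w\in\dom A$ and $Aw=-v$. The positivity of $A$ (hence of $I+A$, and the attendant bound $\|(I+A)h\|\geq\|h\|$) is what makes $I+A$ injective and lets the Riesz-vector produced from $(v,w)$ be pulled back into the domain; without it one only gets membership in $\overline{\ran}$ of something, not the exact equality needed. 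Everything else is routine inner-product manipulation of the kind already displayed in the preceding corollaries.
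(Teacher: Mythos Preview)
Your overall strategy for (ii)$\Rightarrow$(i)---verify $G(A)^\perp\subseteq\ran M_{A,A}$ and invoke Theorem~\ref{T:theorem53} (iii)$\Rightarrow$(i)---is a legitimate route, but it is not what the paper does. The paper's argument is essentially one line: $I+A$ is a symmetric operator with full range, so Corollary~\ref{C:corollary67} yields that $I+A$ is (densely defined and) selfadjoint, whence $A=(I+A)-I$ is selfadjoint. The graph-orthocomplement work you propose to carry out for $A$ has already been packaged into Corollary~\ref{C:corollary67} (via the closed-range corollary preceding it), and the paper simply cites it. Note, incidentally, that positivity of $A$ plays no role in (ii)$\Rightarrow$(i) by either method; it is needed only for (i)$\Rightarrow$(ii), to make $I+A$ bounded below.

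Your direct approach can be completed, but not with the tactics you sketch. Writing only $v=(I+A)g$ and testing at $x=g$ gives $\|g\|^2+\sip{Ag}{g}+\sip{w}{Ag}=0$, which forces nothing---unlike in the closed-range corollary there is no kernel piece to kill off. The Riesz-vector idea is likewise inert: the functional $Ax\mapsto\sip{v}{x}=-\sip{w}{Ax}$ is trivially bounded, and its representative is just the projection of $-w$ onto $\overline{\ran A}$, which says nothing about membership of $w$ in $\dom A$. What does work is to use surjectivity of $I+A$ on \emph{both} coordinates: write $v=(I+A)g$ and $w=(I+A)k$ with $g,k\in\dom A$; then symmetry gives, for every $x\in\dom A$,
\begin{align*}
0=\sip{v}{x}+\sip{w}{Ax}=\sip{g+Ak}{(I+A)x},
\end{align*}
and $\ran(I+A)=\hil$ forces $g+Ak=0$. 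Hence $Ak=-g\in\dom A$, so $w=k+Ak\in\dom A$ and $Aw=Ak+A(-g)=-g-Ag=-v$, i.e.\ $(v,w)=M_{A,A}(0,w)\in\ran M_{A,A}$.
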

\begin{proof}
If $A$ is positive and selfadjoint then $I+A$ is a  closed operator that is bounded from below, hence it has closed range. By selfadjointness this implies $\ran (I+A)=\hil.$ Conversely, $I+A$ is a full range symmetric, and hence selfadjoint operator in view of Corollary \ref{C:corollary67}. As a result, $A=(I+A)-I$ is also selfadjoint. 
\end{proof}

If $T$ is a densely defined  closed operator between $\hil$ and $\kil$ then $T^*T$ and $TT^*$ are both selfadjoint operators in the Hilbert spaces $\hil$ and $\kil$, respectively. It is also known that the domain of the corresponding positive selfadjoint square roots $\abs T:=(T^*T)^{1/2}$ and $\abs{T^*}=(TT^*)^{1/2}$ satisfy
\begin{align*}
\dom \abs T=\dom T,\qquad \dom \abs{T^*}=\dom T^*.
\end{align*}
As  an application of Theorem \ref{T:theorem53} we prove below an inequality that was called ``mixed Schwarz'' inequality by P. R. Halmos  in the bounded operator case (see \cite[Problem 138]{Halmos}): 
\begin{theorem}
Let $T$ be a densely defined closed linear operator between the Hilbert spaces $\hil$ and $\kil$. Then 
\begin{align*}
\abs{\sip{Tx}{y}}^2\leq \sip{\abs{T}x}{x}\cdot\sip{\abs{T^*}y}{y}, 
\end{align*}
for $x\in\dom T$ and $y\in\dom T^*$.
\end{theorem}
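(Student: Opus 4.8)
The plan is to realize the mixed Schwarz inequality as a Cauchy--Schwarz inequality for a suitable semi-inner product, mimicking Halmos' argument in the bounded case but being careful about domains. Concretely, for $x\in\dom T$ and $y\in\dom T^*$ I would use the polar decomposition $T=V\abs T$, where $V$ is the partial isometry with initial space $\overline{\ran\abs T}=\overline{\ran T^*}$ and final space $\overline{\ran T}$, and which satisfies $V^*T=\abs T$, $V\abs T x=Tx$, and $\dom\abs T=\dom T$. The existence of $\abs T=(T^*T)^{1/2}$ and $\abs{T^*}=(TT^*)^{1/2}$ as positive selfadjoint operators, together with $\dom\abs T=\dom T$, $\dom\abs{T^*}=\dom T^*$, is granted by the discussion preceding the statement (von Neumann's theorem), so I may invoke these freely.

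The key computation then runs as follows. Writing $\abs T^{1/2}$ for the (positive selfadjoint) square root of $\abs T$, one has for $x\in\dom T\subseteq\dom\abs T$ that $\abs T x$ need not lie in $\dom\abs T^{1/2}$, so instead I split via $\abs T=\abs T^{1/2}\abs T^{1/2}$ only on the inner-product side: for $x\in\dom T$ and $y\in\dom T^*$,
\begin{align*}
\abs{\sip{Tx}{y}}=\abs{\sip{V\abs Tx}{y}}=\abs{\sip{\abs Tx}{V^*y}}.
\end{align*}
Now I would like to write $\sip{\abs Tx}{V^*y}=\sip{\abs T^{1/2}x}{\abs T^{1/2}V^*y}$ and apply Cauchy--Schwarz, which requires $V^*y\in\dom\abs T^{1/2}$; this is where the domain bookkeeping matters. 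The cleanest route is to observe $\abs{T^*}=V\abs T V^*$ on $\dom\abs{T^*}=\dom T^*$ (the polar decomposition of $T^*$ being $T^*=V^*\abs{T^*}$, equivalently $\abs{T^*}=V\abs TV^*$), which gives $V^*y\in\dom\abs T^{1/2}$ whenever $y\in\dom\abs{T^*}^{1/2}=\dom\abs{T^*}$ — and indeed $\dom\abs{T^*}=\dom T^*$ by hypothesis. Hence for $y\in\dom T^*$,
\begin{align*}
\abs{\sip{Tx}{y}}=\abs{\sip[\big]{\abs T^{1/2}x}{\abs T^{1/2}V^*y}}\leq\|\abs T^{1/2}x\|\cdot\|\abs T^{1/2}V^*y\|,
\end{align*}
and squaring yields $\abs{\sip{Tx}{y}}^2\leq\sip{\abs Tx}{x}\cdot\sip{\abs TV^*y}{V^*y}=\sip{\abs Tx}{x}\cdot\sip{V\abs TV^*y}{y}=\sip{\abs Tx}{x}\cdot\sip{\abs{T^*}y}{y}$, which is the claim.

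I expect the main obstacle to be the careful justification of $V^*y\in\dom\abs T^{1/2}$ and the identity $V\abs TV^*=\abs{T^*}$ on the correct domain — i.e. getting the polar decomposition bookkeeping exactly right without circularity, given that we are using facts about $\abs T$, $\abs{T^*}$ that were only asserted (not proved) in the paragraph before the theorem. An alternative, perhaps more self-contained, approach avoiding polar decomposition altogether is to apply Theorem \ref{T:theorem53} or the range characterization machinery of Section 5 directly: the functional $Tx\mapsto\sip{Tx}{y}$ extended appropriately, combined with the identity $\sip{\abs Tx}{x}=\|Tx\|^2$ for $x\in\dom T$ when $\abs T$ is replaced by... — but this does not immediately produce the sharper product with $\abs{T^*}$, so I would fall back on the polar-decomposition argument above and simply be meticulous about which vectors lie in which domains, noting that $\abs T^{1/2}V^*$ is a closable densely defined operator whose square is $V\abs TV^*=\abs{T^*}$, so that $\dom\abs{T^*}\subseteq\dom(\abs T^{1/2}V^*)$ is exactly the containment needed.
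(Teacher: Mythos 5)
Your proof is correct and takes essentially the same route as the paper's: polar decomposition $T=V\abs{T}$, the identity $V\abs{T}V^{*}=\abs{T^{*}}$, and a Cauchy--Schwarz estimate on $\sip{\abs{T}x}{V^{*}y}$ (you factor through $\abs{T}^{1/2}$, whereas the paper applies Cauchy--Schwarz directly to the positive form $(u,v)\mapsto\sip{\abs{T}u}{v}$ on $\dom \abs{T}$ and derives $V\abs{T}V^{*}=\abs{T^{*}}$ from its Theorem \ref{T:theorem53} instead of quoting it as polar-decomposition folklore). Your one misstatement, $\dom \abs{T^{*}}^{1/2}=\dom\abs{T^{*}}$, is false in general (only ``$\supseteq$'' holds), but it is harmless here, since $y\in\dom T^{*}=\dom\abs{T^{*}}$ already yields $V^{*}y\in\dom\abs{T}\subseteq\dom\abs{T}^{1/2}$, which is all that your argument actually uses.
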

\begin{proof}
Consider the polar decomposition $T=U\abs{T}$ of $T$. Then $T^*=\abs T U^*$ and hence 
\begin{align*}
\abs T^2= T^*T=\abs TU^*UT.
\end{align*}
Consequently, 
\begin{align*}
TT^*=U\abs T^2 U^*=UT^*TU^*=(U\abs T U^*)^2,
\end{align*}
i.e., $U\abs T U^*$ is a positive symmetric operator with selfadjoint square. By Theorem \ref{T:theorem53}, $U\abs T U^*$  itself  is  selfadjoint   and $U\abs T U^*=\abs{T^*}$, accordingly. For $x\in\dom T$ and  $y\in\dom T^*$ we have therefore $U^*y\in \dom \abs T$ and 
\begin{align*}
\abs{\sip{Tx}{y}}^2&=\abs{\sip{\abs Tx}{U^*y}}^2\\
                    &\leq \sip{\abs Tx}{x}\cdot \sip{\abs{T}U^*y}{U^*y}\\
                    &=\sip{\abs Tx}{x}\cdot \sip{\abs{T^*} y}{ y},
\end{align*}
completing the proof.
\end{proof}
As it was shown in \cite{TadjointT},  $T^*T$ is not necessarily selfadjoint but it always has a positive selfadjoint extension, namely its smallest, so called Krein-von Neumann extension. Below we give an alternative proof of that statement, constructing a selfadjoint extension of $T$ by means of the canonical graph  projection of $\hil\times \kil$ onto $\overline{G(T)}$. 
\begin{theorem}\label{T:T*Text}
Let $T$ be a densely defined  linear operator acting between $\hil$ and $\kil$. Denote by $P$  the  orthogonal projection of $\hil\times \kil$ onto $\overline{G(T)}$ and set
\begin{equation}
 G_S:=P\langle\hil\times\{0\}\rangle.
\end{equation}
Then $G_S$ is the graph of a densely defined closable operator $S$ which fulfills the following properties:
\begin{enumerate}[\upshape a)]
 \item $G_S\subseteq \overline{G(T)}$;
 \item $\ran S\subseteq \dom T^*$;
 \item $T^*S$ is a positive selfadjoint extension of $T^*T$.
\end{enumerate}
Furthermore, $S$ is the unique linear operator possessing all the properties a)-c).  
\end{theorem}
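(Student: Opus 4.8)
The plan is to work entirely with graphs in $\hil\times\kil$ and exploit the fact that the orthogonal projection $P$ onto the closed subspace $\overline{G(T)}$ is a bounded selfadjoint operator, so that $G_S=P\langle\hil\times\{0\}\rangle$ is a genuine linear subspace of $\overline{G(T)}$. First I would show that $G_S$ is the graph of an operator, i.e. that whenever $(0,w)\in G_S$ one has $w=0$. If $(0,w)=P(h,0)$ then $(h,0)-(0,w)=(I-P)(h,0)$ is orthogonal to $\overline{G(T)}=W\langle G(T^*)\rangle^{\perp}$ — here one uses $\overline{G(T)}^{\perp}=W\langle G(T^*)\rangle$, valid since $T$ is densely defined — so $(h,-w)\perp W\langle G(T^*)\rangle$, meaning $(h,-w)\in \overline{G(T)}$; comparing with $(0,w)\in\overline{G(T)}$ and subtracting gives $(h,0)\in\overline{G(T)}$, forcing $h=0$ (as $T$, or rather its closure, is an operator) and then $w=0$. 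Thus $G_S=G(S)$ for a well-defined linear operator $S$, and property a) is immediate from $G_S\subseteq\ran P=\overline{G(T)}$, which also shows $S$ is closable (a subspace of a graph is a graph). Density of $\dom S$: if $h\in(\dom S)^{\perp}$ then $(h,0)\perp G_S$; but $(h,0)=P(h,0)\oplus (I-P)(h,0)$ and $P(h,0)\in G_S$, so $\sip{(h,0)}{(h,0)}=\sip{(h,0)}{P(h,0)}=\|P(h,0)\|^2$ forces $P(h,0)=(h,0)$, i.e. $(h,0)\in\overline{G(T)}$, hence $h=0$ and $Sh=0$; so $\dom S$ is dense.

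For b), note that $(I-P)(h,0)\in\overline{G(T)}^{\perp}=W\langle G(T^*)\rangle$ for every $h$, and $(I-P)(h,0)=(h,0)-(h,Sh)=(-Sh \cdot(-1)\ ,\,?)$ — more carefully, writing $P(h,0)=(h',Sh')$ with $h'=h'$; actually I would set it up so that $P(h,0)=(Sh$'s domain element$)$. Cleaner: since $(h,0)-P(h,0)\in W\langle G(T^*)\rangle$, and writing $P(h,0)=(u,Su)$ for the appropriate $u\in\dom S$ (by definition of $G_S$ as a graph, the first coordinate of $P(h,0)$ is $u$ and the second is $Su$), we get $(h-u,\,-Su)\in W\langle G(T^*)\rangle$, i.e. $-Su=-T^*(h-u)$, hence $Su\in\dom T^*$ and $T^*(h-u)=Su$. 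Letting $h$ range over $\dom S$ — but we actually want this for all $u\in\dom S$; since every $u\in\dom S$ arises as the first coordinate of some $P(h,0)$, this gives $\ran S\subseteq\dom T^*$, proving b). Moreover this computation shows $T^*Su=T^*(T^*(h-u))$... rather, I would instead directly identify $T^*S$. Here is the key point for c): for $u\in\dom S$ pick $h$ with $P(h,0)=(u,Su)$; then $h-u=$ first coordinate of $(I-P)(h,0)\in W\langle G(T^*)\rangle$, and the relation $(h-u,-Su)\in W\langle G(T^*)\rangle$ says exactly $Su\in\dom T^*$, $T^*Su=h-u$, so $(I+T^*S)u=u+(h-u)=h$. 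Thus $\ran(I+T^*S)=\dom S$'s image... no: $h$ ranges over all of $\hil$ (we projected $(h,0)$ for arbitrary $h\in\hil$), so $\ran(I+T^*S)=\hil$. Also $T^*S$ is symmetric and positive: for $u,u'\in\dom S$, $\sip{T^*Su}{u'}=\sip{Su}{Su'}_{\hil\times\kil}$-type computation using $(u,Su),(u',Su')\in G(T)$-closure and that $T^*$ is the adjoint; positivity follows since $\sip{T^*Su}{u}=\|Su\|^2\geq 0$ after checking $\sip{T^*Su}{u}=\sip{Su}{Su}$ via $T\wedge T^*$ on the closure. Then Theorem~\ref{T:theorem55} (positive symmetric with $\ran(I+A)=\hil$ implies selfadjoint) shows $T^*S$ is positive selfadjoint. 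That it extends $T^*T$: if $x\in\dom(T^*T)$ then $(x,Tx)\in G(T)\subseteq\overline{G(T)}$ and $(x,0)=(x,Tx)+(0,-Tx)$; applying $P$ and using that $(0,-Tx)$ has $P(0,-Tx)=$ the piece whose... one checks $P(x,0)=(x,Tx)-(\text{correction})$, and since $Tx\in\dom T^*$ the correction lies in $W\langle G(T^*)\rangle$, giving $Sx=Tx$ wait that is false in general; instead $P(x,0)=(u,Su)$ with $x-u\perp\dom$... Let me just say: a direct check using $T^*Tx\in\hil$ shows $(x,Tx)+(T^*Tx, ...)$... I will verify $x\in\dom S$ with $T^*Sx=T^*Tx$ by confirming $(x,Tx)=P(x+T^*Tx,0)$ — indeed $(x+T^*Tx,0)-(x,Tx)=(T^*Tx,-Tx)=W(Tx,T^*Tx)\in W\langle G(T^*)\rangle=\overline{G(T)}^{\perp}$ and $(x,Tx)\in\overline{G(T)}$, so this is the orthogonal decomposition, whence $(x+T^*Tx,0)$ projects to $(x,Tx)$, i.e. $x+T^*Tx\in\dom S$ with... hmm, that shows the element $x+T^*Tx$ maps under $S$ to $Tx$; combined with the formula $(I+T^*S)(x+T^*Tx)=$ (the $h$) $=x+T^*Tx$ this is consistent but I need $x\in\dom S$ itself. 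The honest route: $\dom S=\dom\abs{T^{**}}$ will emerge, or simply note $\dom(T^*T)\subseteq\dom T\subseteq\dom T^{**}$ and run the decomposition $(x,0)=(x,T^{**}x)+W(T^{**}x,?)$ — since $T^{**}x=Tx\in\dom T^*$ for $x\in\dom(T^*T)$, the second summand $(−T^{*}T^{**}x$-type$)$ lies in $\overline{G(T)}^\perp$, so $P(x,0)=(x,Tx)$, giving $x\in\dom S$, $Sx=Tx$, hence $T^*Sx=T^*Tx$.

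For uniqueness: suppose $S'$ is any linear operator with properties a)--c). From a), $G(S')\subseteq\overline{G(T)}$. I claim $G(S')=G_S$. The inclusion I would aim for is $G(S')\supseteq G_S$ first, or argue both are maximal with a defining property. The cleanest: c) says $I+T^*S'$ is selfadjoint, in particular surjective (bounded below + closed), so $\ran(I+T^*S')=\hil$; given $h\in\hil$ pick $u'\in\dom S'$ with $(I+T^*S')u'=h$, then $(u',S'u')\in\overline{G(T)}$ and $(u',S'u')-(h,0)=(-T^*S'u',S'u')=W(S'u',T^*S'u')\in\overline{G(T)}^{\perp}$, so $(h,0)=(u',S'u')\oplus W(S'u',T^*S'u')$ is precisely the orthogonal decomposition of $(h,0)$ along $\overline{G(T)}\oplus\overline{G(T)}^{\perp}$; therefore $P(h,0)=(u',S'u')$, which shows $u'\in\dom S$ and $S u'=S'u'$, i.e. $S'\subseteq S$. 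Since $S$ also satisfies c), the same argument with roles swapped — or simply $\dom S=pr_\hil\langle G_S\rangle=pr_\hil\langle P\langle\hil\times\{0\}\rangle\rangle$ is already determined — gives $S\subseteq S'$, hence $S=S'$.

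The main obstacle I anticipate is the bookkeeping in part c): correctly pinning down for which vectors the projection formula $P(h,0)=(u,Su)$ with $(I+T^*S)u=h$ holds, and in particular verifying cleanly that $T^*T\subseteq T^*S$ (one must use $T^{**}x=Tx$ and $Tx\in\dom T^*$ for $x\in\dom T^*T$, which is where density/closability of $T$ enters). The symmetry and positivity of $T^*S$, though conceptually routine via the identity $\sip{T^*Su}{u}=\|Su\|^2$ coming from $T^{**}\wedge T^*$ on the closure, also requires care because $S$ itself need not be closed, so one should phrase these computations on $\overline{G(T)}=G(T^{**})$. Everything else is orthogonal-decomposition algebra in $\hil\times\kil$ together with a single invocation of Theorem~\ref{T:theorem55}.
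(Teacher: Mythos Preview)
Your overall strategy coincides with the paper's: exploit the orthogonal decomposition $\hil\times\kil=\overline{G(T)}\oplus W\langle G(T^*)\rangle$, read off $(I+T^*S)u=h$ from $P(h,0)=(u,Su)$, and finish with Theorem \ref{T:theorem55}. The surjectivity of $I+T^*S$, the verification $P(x+T^*Tx,0)=(x,Tx)$ for $T^*T\subseteq T^*S$, and the uniqueness argument are all essentially the paper's. But the opening steps---$G_S$ is a graph, $S$ is closable, $\dom S$ is dense---share a genuine gap.

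You repeatedly invoke the implication ``$(h,0)\in\overline{G(T)}\Rightarrow h=0$''. This is false: membership in a graph only says the underlying operator sends $h$ to $0$, and even that presupposes $\overline{G(T)}$ \emph{is} a graph, i.e.\ $T$ is closable, which the theorem does not assume. In your graph argument there is first a slip (from $(h,-w)\perp\overline{G(T)}$ you pass to $(h,-w)\perp W\langle G(T^*)\rangle$, whereas in fact $(h,-w)\in W\langle G(T^*)\rangle$), and then the step $(h,0)\in\overline{G(T)}\Rightarrow h=0$. In the density argument the correct computation $\|P(u,0)\|^2=\sip{P(u,0)}{(u,0)}=0$ yields $P(u,0)=0$, not $P(u,0)=(u,0)$; the right continuation is $(u,0)\in\overline{G(T)}^{\perp}=W\langle G(T^*)\rangle$, hence $(u,0)=(-T^*z,z)$ and $u=0$. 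And ``a subspace of a graph is a graph'' for closability again assumes $\overline{G(T)}$ is a graph. The paper avoids all of this by establishing b) first and then arguing: if $(0,w)\in\overline{G_S}\subseteq\overline{G(T)}$, orthogonality to every $(-T^*z,z)$ gives $w\in(\dom T^*)^{\perp}$, while b) forces $w\in\overline{\ran S}\subseteq\overline{\dom T^*}$, so $w=0$. This works whether or not $T$ is closable; your route does not.
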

\begin{proof}
Observe first that $G_S$ obviously  satisfies a). To prove b) take an element  $\kispair{x}{y}=P\kispair{u}{0}$ from $G_S$ and choose $z\in\dom T^*$ such that  $(I-P)\kispair{u}{0}=\kispair{-T^*z}{z}$ according to identity $G(T)^{\perp}=W\langle G(T^*)\rangle$. Then one obtains  
\begin{equation*}
\pair{u}{0}=\pair{x-T^*z}{y+z},
\end{equation*}
which yields $y=-z\in\dom T^*$.  Consequently, $\ran G_S\subseteq \overline{G(T)}$. Next we prove that $\overline{G_S}$ is the graph of an operator: with this aim  let $(0,w)\in \overline{G_S}$. Then $w$ belongs to $\overline{\dom T^*}$ because of the preceding observation. Furthermore, $(0,w)\in \overline{G(T)}$ implies
\begin{align*}
0=\sip[\bigg]{\pair{0}{w}}{\pair{-T^*z}{z}}=\sip{w}{z}
\end{align*}
for all $z$ from $\dom T^*$,  hence  $w\in(\dom T^*)^{\perp}$ and therefore  $w=0$. We see now that $G_S$ is the graph of a closable operator, say $S$. Our next claim is to show that $S$ is densely defined. To this end consider a vector $u$ from $(\dom S)^{\perp}$ and observe that 
\begin{equation*}
0=\sip[\bigg]{\pair{u}{0}}{\pair{x}{Sx}}
\end{equation*}
holds for all $x$ from $\dom S$. Hence $\kispair{u}{0}\in G(S)^{\perp}$ and consequently, 
\begin{align*}
 \left\|P\pair{u}{0}\right\|^2=\sip[\bigg]{P\pair{u}{0}}{\pair{u}{0}}=0.
\end{align*}
This means that $\kispair{u}{0}$ belongs to $G(T)^{\perp}$  and thus $\kispair{u}{0}=\kispair{-T^*z}{z}$ for certain $z$ from $\dom T^*$. This in turn shows that   $z=0$ and therefore $u=-T^*z=0$, as it is claimed. 

Now we see that $S$ is a densely defined closable operator such that $G(S)\subseteq \overline{G(T)}$. In particular, $S^*S$ is a positive symmetric extension of $T^*S$ and thus $T^*S$ itself is positive and symmetric. According to Theorem \ref{T:theorem55}, in order to prove  selfadjointness our only duty is to show that $I+T^*S$ has full range. To this aim consider  $u$ from $\hil$,  then
\begin{equation*}
\pair{u}{0}=P\pair{u}{0}+(I-P)\pair{u}{0}=\pair{x-T^*z}{Sx+z}
\end{equation*}
for certain $x\in\dom S$ and $z\in\dom T^*$. That gives $Sx=-z\in\dom T^*$ and therefore   
\begin{align*}
u=x-T^*z=x+T^*Sx\in\ran (I+T^*S),
\end{align*}
as it is claimed. 

In order to see that $T^*S$ extends $T^*T$ let us consider $v$ in $\dom T^*T$, then   
\begin{align*}
\pair{v+T^*Tv}{0}=\pair{v}{Tv}+\pair{-T^*(-Tv)}{-Tv}\in \overline{G(T)}+G(T)^{\perp},
\end{align*}
whence it follows that 
\begin{align*}
\pair{v}{Tv}=P\pair{v+T^*Tv}{0}\in G(S).
\end{align*} 
In particular, $v\in\dom S$ and $Sv=Tv$. Hence, $T^*Sv=T^*Tv$.

To the uniqueness part of the statement consider an operator $R$ which satisfies conditions a)-c),  with $S$ replaced by $R$. By b),   $Rx\in\dom T^*$  for every $x\in\dom R$. Hence $\kispair{T^*Rx}{-Rx}\in W\langle G(T^*)\rangle$ and $\kispair{x}{Rx}\in \overline{G(T)}$ by part a) and therefore 
\begin{equation*}
\pair{x+T^*Rx}{0}=\pair{x}{Rx}+\pair{-T^*(-Rx)}{-Rx}\in \overline{G(T)}+ G(T)^{\perp},
\end{equation*}
whence we infer that $\kispair{x}{Rx}\in G(S)$ and $R\subseteq S$, accordingly. For the reverse inclusion consider $u$ from $\hil$. By positive selfadjointness there is a unique $x$ in $\dom T^*R$ such that $u=x+T^*Rx$. Just as above, 
\begin{equation*}
\pair{u}{0}=\pair{x}{Rx}+\pair{T^*Rx}{-Rx}\in \overline{G(T)}+G(T)^{\perp},
\end{equation*}
whence we get $P\kispair{u}{0}=\kispair{x}{Rx}\in G(R).$ Consequently, 
\begin{align*}
G(S)=P\langle \hil\times \{0\}\rangle \subseteq G(R),
\end{align*}
and therefore $S\subseteq R$. The proof is  complete. 
\end{proof}
We remark that $S=T^{**}$ for closable $T$ may only happen if $T$ is bounded. Indeed, in this  case  we have $\ran T^{**}\subseteq \dom T^*$ and hence $T$ is bounded according to \cite[Lemma 2.1]{TZS:closedrange}. For closed $T$  we can establish the following result:
\begin{corollary}
Let $T$ be a densely defined   closed linear operator acting between Hilbert spaces $\hil$ and $\kil$. Then the only linear operator $S$ fulfilling a)-c) in the previous theorem is the restriction of $T$ to $\dom T^*T$. In particular we have 
\begin{align*}
G(T|_{\dom T^*T})=P\langle \hil\times\{0\}\rangle.
\end{align*}
\end{corollary}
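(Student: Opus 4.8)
The plan is to apply Theorem \ref{T:T*Text} directly and then identify the unique operator $S$ explicitly in the case when $T$ is closed. First I would invoke the theorem to obtain the operator $S$ whose graph is $P\langle\hil\times\{0\}\rangle$, together with its properties a)--c); in particular $S$ is the \emph{unique} operator satisfying those three conditions. So the whole task reduces to verifying that the restriction $R:=T|_{\dom T^*T}$ also satisfies a)--c); uniqueness then forces $S=R$, which is the assertion.

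Next I would check the three conditions for $R$. For a): since $T$ is closed, $G(T)=\overline{G(T)}$, and as $\dom T^*T\subseteq\dom T$ we have $G(R)\subseteq G(T)=\overline{G(T)}$. For b): for $x\in\dom T^*T$ we have $Rx=Tx$ and, by the very definition of $\dom T^*T$, $Tx\in\dom T^*$; hence $\ran R\subseteq\dom T^*$. For c): by construction $T^*R=T^*T|_{\dom T^*T}=T^*T$, and since $T$ is densely defined and closed, von Neumann's classical theorem gives that $T^*T$ is positive selfadjoint; thus $T^*R=T^*T$ is trivially a positive selfadjoint extension of $T^*T$ (it equals it). So $R$ satisfies a)--c).

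By the uniqueness clause of Theorem \ref{T:T*Text}, $S=R=T|_{\dom T^*T}$, and consequently $G(T|_{\dom T^*T})=G(S)=P\langle\hil\times\{0\}\rangle$, which is precisely the displayed identity. There is essentially no obstacle here beyond making sure the hypotheses of Theorem \ref{T:T*Text} are met (density of $\dom T$) and recalling the classical fact that $T^*T$ is positive selfadjoint for densely defined closed $T$; everything else is a direct specialization. If one wished to avoid citing the classical $T^*T$ selfadjointness result, one could instead note that the theorem already produces \emph{some} operator $S$ with $T^*S$ positive selfadjoint extending $T^*T$, and that for closed $T$ the operator $R$ above satisfies a) and b), so that the argument in the uniqueness part of the theorem (which shows any operator satisfying a) and b) is contained in $S$) yields $R\subseteq S$; then the reverse inclusion $S\subseteq R$ follows since $\ran S\subseteq\dom T^*$ and $G(S)\subseteq G(T)$ together give $Sx=Tx$ with $Tx\in\dom T^*$, i.e. $x\in\dom T^*T$. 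I expect the cleanest writeup is the first route: verify a)--c) for $R$ and quote uniqueness.
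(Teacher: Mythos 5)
Your proposal is correct and follows essentially the same route as the paper: verify that $R=T|_{\dom T^*T}$ satisfies properties a)--c) of Theorem \ref{T:T*Text} (using closedness of $T$ for a) and von Neumann's theorem for c)) and then invoke the uniqueness clause. The paper's proof is just a terser version of exactly this argument.
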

\begin{proof}
If $T$ is densely defined and closed then $R:=T|_{\dom T^*T}$ fulfills any of properties a)-c) of the previous theorem. The uniqueness part of the result leads us to the desired conclusion.  
\end{proof}

\begin{proposition}
With  notation of Theorem \ref{T:T*Text} we  have 
\begin{equation}\label{E:domS}
 \dom S=\set{x\in\hil}{\exists y\in\dom T^*, (x,y)\in \overline{G(T)}}
\end{equation}
\end{proposition}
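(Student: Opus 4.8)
The plan is to prove the two inclusions defining $\dom S$ separately, working directly with the orthogonal projection $P$ onto $\overline{G(T)}$ and the decomposition $\hil\times\{0\}\ni\kispair{u}{0}=P\kispair{u}{0}+(I-P)\kispair{u}{0}$ together with the identity $G(T)^\perp=W\langle G(T^*)\rangle$, exactly as in the proof of Theorem \ref{T:T*Text}. Denote temporarily by $D$ the right-hand side of \eqref{E:domS}, i.e.\ the set of $x\in\hil$ for which there is $y\in\dom T^*$ with $\kispair{x}{y}\in\overline{G(T)}$.

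First I would show $\dom S\subseteq D$. Take $x\in\dom S$; then $\kispair{x}{Sx}\in G_S\subseteq\overline{G(T)}$ by part a) of Theorem \ref{T:T*Text}, and $Sx\in\dom T^*$ by part b). So $y:=Sx$ witnesses $x\in D$. (This direction is essentially a restatement of a) and b).)

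For the reverse inclusion $D\subseteq\dom S$, suppose $x\in\hil$ and $y\in\dom T^*$ satisfy $\kispair{x}{y}\in\overline{G(T)}$. The key computation is to write, using $\kispair{-T^*y}{y}\in G(T)^\perp$ (via $G(T)^\perp=W\langle G(T^*)\rangle$),
\begin{equation*}
\pair{x+T^*y}{0}=\pair{x}{y}+\pair{-T^*y}{-y}\in\overline{G(T)}+G(T)^\perp,
\end{equation*}
where the two summands lie in complementary orthogonal subspaces. Hence $P\kispair{x+T^*y}{0}=\kispair{x}{y}$, so $\kispair{x}{y}\in G_S=G(S)$; in particular $x\in\dom S$ (and incidentally $Sx=y$). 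This proves $D\subseteq\dom S$ and hence equality.

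I do not expect a serious obstacle here: the statement is really a direct unpacking of the construction in Theorem \ref{T:T*Text}, and the only thing to be careful about is the orthogonality of the decomposition $\overline{G(T)}\oplus G(T)^\perp$, which legitimizes reading off $P\kispair{x+T^*y}{0}$ from the displayed splitting. The mild subtlety worth a sentence is that one must verify $T^*y$ is well-defined, which is exactly the hypothesis $y\in\dom T^*$; everything else is formal.
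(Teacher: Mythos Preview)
Your proof is correct and follows the paper's argument essentially verbatim, modulo a sign slip in the key display: the second summand should be $\kispair{T^*y}{-y}=W(-y,T^*(-y))\in G(T)^\perp$, not $\kispair{-T^*y}{-y}$ (which is generally not in $G(T)^\perp$), and with this correction the sum really equals $\kispair{x+T^*y}{0}$. Apart from this typo, both inclusions are proved exactly as in the paper.
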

\begin{proof}
Consider first $x\in \dom S$, then $y=Sx$ belongs to $\dom S^*$ due to Theorem \ref{T:T*Text} b), and also $(x,Sx)\in G(S)\subseteq \overline{G(T)}$, according to Theorem \ref{T:T*Text} a). This proves inclusion ``$\subseteq $''. For the reverse inclusion let $x\in \hil$ and $y\in \dom T^*$ such that $(x,y)\in \overline{G(T)}$. Then 
\begin{align*}
\pair{x+T^*y}{0}=\pair{x}{y}+\pair{T^*y}{-y}\in \overline{G(T)}+G(T)^{\perp},
\end{align*}
whence we get $\kispair{x}{y}=P\kispair{x+T^*y}{0}\in G(S)$ and $x\in\dom S$, accordingly.
\end{proof}
Thanks to formula \eqref{E:domS} describing the domain of $S$ we are able to specify also the domain of $T^*S$:
\begin{corollary}
With notation of Theorem \ref{T:T*Text}, the domain of the positive selfadjoint operator $T^*S$ is given by 
\begin{align*}
\dom T^*S=\set{x\in \hil}{\exists y\in\dom T^*, \exists \seq{x} \textrm{~in $\dom T$}, x_n\to x, Tx_n\to y}
\end{align*}
\end{corollary}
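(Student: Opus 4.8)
The plan is to deduce the formula directly from the preceding Proposition (equation \eqref{E:domS}) together with a sequential description of the graph closure $\overline{G(T)}$. First I would observe that the composition $T^*S$ is in fact defined on all of $\dom S$: by part b) of Theorem \ref{T:T*Text} one has $\ran S\subseteq \dom T^*$, so that $Sx\in\dom T^*$ for every $x\in\dom S$, and hence $\dom(T^*S)=\dom S$. Thus it suffices to rewrite $\dom S$ in the asserted form.

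Second, by the Proposition,
\[
\dom S=\set{x\in\hil}{\exists\, y\in\dom T^*,\ (x,y)\in\overline{G(T)}}.
\]
It then remains to unravel the condition $(x,y)\in\overline{G(T)}$. Since $\hil\times\kil$ carries the product inner product, it is a metric space, so the norm-closure of the linear subspace $G(T)=\set{(x,Tx)}{x\in\dom T}$ coincides with its sequential closure; consequently $(x,y)\in\overline{G(T)}$ if and only if there is a sequence $\seq{x}$ in $\dom T$ with $(x_n,Tx_n)\to (x,y)$ in $\hil\times\kil$, that is, with $x_n\to x$ and $Tx_n\to y$. Substituting this equivalence into the displayed description of $\dom S$ and using $\dom(T^*S)=\dom S$ yields precisely the claimed identity.

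I do not anticipate any real difficulty here: the corollary is essentially a repackaging of \eqref{E:domS} with the observation that post-composition with $T^*$ does not shrink the domain. The only point deserving a word of care is the passage from topological closure to sequential closure, which is harmless in a metric space; should one prefer to avoid invoking that principle, one can verify the two inclusions by hand, since for ``$\supseteq$'' the elements $(x_n,Tx_n)$ lie in $G(T)$ and converge to $(x,y)$, forcing $(x,y)\in\overline{G(T)}$, while for ``$\subseteq$'' every point of $\overline{G(T)}$ is, in a normed space, a limit of points of $G(T)$, each of the form $(x_n,Tx_n)$ with $x_n\in\dom T$.
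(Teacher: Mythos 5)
Your proposal is correct and follows exactly the paper's argument: identify $\dom(T^*S)=\dom S$ via part b) of Theorem \ref{T:T*Text}, then translate the description \eqref{E:domS} of $\dom S$ into sequential form using that $\overline{G(T)}$ is the norm (equivalently sequential) closure of the graph. The extra care you take over the passage from topological to sequential closure is harmless but not needed beyond what the paper leaves implicit.
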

\begin{proof}
It follows from Theorem \ref{T:T*Text} b) that $\dom S=\dom T^*S$. The assertion is therefore an immediate consequence of \eqref{E:domS}.
\end{proof}
As $T^*S$ is a positive selfadjoint operator in $\hil$, by virtue of Theorem \ref{T:theorem55}, $I+T^*S$  has an everywhere defined bounded inverse. Below we are going to analyze  $(I+T^*S)^{-1}$:
\begin{proposition}\label{P:I+TSquadratic}
With notation of Theorem \ref{T:T*Text}, the quadratic form of $(I+T^*S)^{-1}$ is calculated by
\begin{align*}
\sip{(I+T^*S)^{-1}x}{x}=\left\|P\kispair{x}{0}\right\|^2,\qquad x\in\hil.
\end{align*}
\end{proposition}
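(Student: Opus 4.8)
The plan is to read off the operator $S$ from the construction in the proof of Theorem~\ref{T:T*Text} and then to exploit that $P$ is an orthogonal projection. Fix $x\in\hil$. Since by definition $G(S)=G_S=P\langle\hil\times\{0\}\rangle$, the vector $P\kispair{x}{0}$ lies on the graph of $S$; writing $a:=pr_{\hil}\bigl(P\kispair{x}{0}\bigr)\in\dom S$ we therefore have $P\kispair{x}{0}=\kispair{a}{Sa}$.

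Next I would recover the decomposition that is already carried out inside the proof of Theorem~\ref{T:T*Text}. Splitting $\kispair{x}{0}=P\kispair{x}{0}+(I-P)\kispair{x}{0}$ and using the identity $G(T)^{\perp}=W\langle G(T^*)\rangle$, there is $z\in\dom T^*$ with $(I-P)\kispair{x}{0}=\kispair{-T^*z}{z}$. Comparing the two coordinates yields $x=a-T^*z$ and $0=Sa+z$, so that $z=-Sa\in\dom T^*$ and $x=a+T^*Sa$. Because $I+T^*S$ is injective --- it is $I$ plus the positive selfadjoint operator $T^*S$ and has an everywhere defined bounded inverse --- this says precisely that $a=(I+T^*S)^{-1}x$.

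Finally I would compute the quadratic form directly. Since $P$ is an orthogonal projection we have $P=P^*=P^2$, hence
\begin{align*}
\left\|P\kispair{x}{0}\right\|^2
 &=\sip[\big]{P\kispair{x}{0}}{P\kispair{x}{0}}
  =\sip[\big]{P\kispair{x}{0}}{\kispair{x}{0}}\\
 &=\sip[\big]{\kispair{a}{Sa}}{\kispair{x}{0}}
  =\sip{a}{x}
  =\sip{(I+T^*S)^{-1}x}{x},
\end{align*}
which is the asserted identity. There is no real obstacle here; the only step needing a moment's care is the identification $a=(I+T^*S)^{-1}x$, and that is nothing more than the surjectivity computation already present in the proof of Theorem~\ref{T:T*Text}.
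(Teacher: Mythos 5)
Your proof is correct and follows essentially the same route as the paper's: both arguments rest on the decomposition of $\kispair{x}{0}$ along $\overline{G(T)}\oplus G(T)^{\perp}$ to identify $P\kispair{x}{0}$ with the graph vector $\kispair{(I+T^*S)^{-1}x}{S(I+T^*S)^{-1}x}$, followed by the same inner-product computation using $P=P^*=P^2$. The only cosmetic difference is direction: the paper starts from $Ax$ with $A=(I+T^*S)^{-1}$ and reads off $P\kispair{x}{0}=\kispair{Ax}{SAx}$, while you start from $P\kispair{x}{0}=\kispair{a}{Sa}$ and solve $x=(I+T^*S)a$ to conclude $a=Ax$.
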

\begin{proof}
Denote by $A$ the inverse of $I+T^*S$. For  $x\in H$ we have $x=(I+T^*S)Ax$, hence
\begin{align*}
\pair{x}{0}=\pair{Ax}{SAx}+\pair{T^*SAx}{-SAx}\in \overline{G(T)}+G(T)^{\perp},
\end{align*}
which gives $P\kispair{x}{0}=\kispair{Ax}{SAx}$. Consequently, 
\begin{align*}
\sip{Ax}{x}=\sip[\bigg]{\pair{Ax}{SAx}}{\pair{x}{0}}=\sip[\bigg]{P\pair{x}{0}}{\pair{x}{0}}=\left\|P\pair{x}{0}\right\|^2, 
\end{align*}
which completes the proof.
\end{proof}
\begin{corollary}
With notation of Theorem \ref{T:T*Text} we have 
\begin{eqnarray}\label{E:I+TSinverz}
(I+T^*S)^{-1}=pr_{\hil}Ppr_{\hil}^*.
\end{eqnarray}
\end{corollary}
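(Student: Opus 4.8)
The plan is to read the identity off from the proof of Proposition \ref{P:I+TSquadratic}, where in fact more is shown than is stated there. Write $A:=(I+T^*S)^{-1}$ for the everywhere defined bounded inverse supplied by Theorem \ref{T:theorem55}. That proof establishes, for every $x\in\hil$, the decomposition $\pair{x}{0}=\pair{Ax}{SAx}+\pair{T^*SAx}{-SAx}$ with the first summand lying in $\overline{G(T)}$ and the second in $G(T)^{\perp}=W\langle G(T^*)\rangle$, and therefore $P\kispair{x}{0}=\kispair{Ax}{SAx}$. It is this vector identity, and not merely its diagonal consequence $\sip{Ax}{x}=\|P\kispair{x}{0}\|^2$, that we exploit.

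First I would record that $pr_{\hil}^*\colon\hil\to\hil\times\kil$ acts by $pr_{\hil}^*x=\kispair{x}{0}$; indeed $\sip{pr_{\hil}(h,k)}{x}=\sip{h}{x}=\sip{(h,k)}{(x,0)}$ for all $(h,k)\in\hil\times\kil$ and $x\in\hil$. Applying the canonical projection $pr_{\hil}$ to both sides of $P\kispair{x}{0}=\kispair{Ax}{SAx}$ now yields $pr_{\hil}Ppr_{\hil}^*x=pr_{\hil}\kispair{Ax}{SAx}=Ax=(I+T^*S)^{-1}x$, which is precisely \eqref{E:I+TSinverz}.

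An alternative, slightly more structural route argues purely through quadratic forms. Both $(I+T^*S)^{-1}$ and $pr_{\hil}Ppr_{\hil}^*$ are everywhere defined bounded selfadjoint (indeed positive) operators on $\hil$: the former by Theorem \ref{T:theorem55}, the latter because $P=P^*=P^2$ forces $pr_{\hil}Ppr_{\hil}^*=(Ppr_{\hil}^*)^*(Ppr_{\hil}^*)$. By Proposition \ref{P:I+TSquadratic} the two operators have the same quadratic form $x\mapsto\|P\kispair{x}{0}\|^2$, and a bounded selfadjoint operator is uniquely determined by its quadratic form via polarization (valid over $\dupR$ as well as over $\dupC$ for symmetric operators). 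Hence the two operators agree.

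I do not expect a genuine obstacle here. The only points that need a word of care are the elementary identification $pr_{\hil}^*x=\kispair{x}{0}$ and the observation that the full vector identity $P\kispair{x}{0}=\kispair{Ax}{SAx}$ is already contained in the proof of Proposition \ref{P:I+TSquadratic}, so that no new computation is needed.
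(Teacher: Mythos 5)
Your proposal is correct, and your second route is exactly the paper's argument: the paper computes $pr_{\hil}^*x=(x,0)$, invokes Proposition \ref{P:I+TSquadratic} to conclude that $(I+T^*S)^{-1}$ and $pr_{\hil}Ppr_{\hil}^*$ have the same quadratic form, and then identifies the two operators. Your first route is a mild but genuine improvement: instead of passing through the \emph{statement} of Proposition \ref{P:I+TSquadratic}, you extract from its \emph{proof} the full vector identity $P\kispair{x}{0}=\kispair{Ax}{SAx}$ and simply apply $pr_{\hil}$ to both sides, obtaining $pr_{\hil}Ppr_{\hil}^*x=Ax$ directly. This bypasses the step ``equal quadratic forms imply equal operators,'' which the paper leaves implicit and which, in a real Hilbert space, is only valid because both operators are symmetric (a skew-symmetric operator has vanishing quadratic form); your explicit remark that $pr_{\hil}Ppr_{\hil}^*=(Ppr_{\hil}^*)^*(Ppr_{\hil}^*)$ is selfadjoint, together with the selfadjointness of $(I+T^*S)^{-1}$ from Theorem \ref{T:theorem55}, is precisely the justification the paper omits. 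Either route suffices; the first is the shorter and the more robust of the two.
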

\begin{proof}
An immediate calculation shows that the adjoint operator $pr_{\hil}^*:\hil\to\hil\times \kil$ acts by
\begin{align*}
pr_{\hil}^*x=\pair x0,\qquad x\in\hil.
\end{align*}
Thus by Proposition \ref{P:I+TSquadratic} we get that 
\begin{align*}
\sip{(I+T^*S)^{-1}x}{x}=\sip{pr_{\hil}Ppr_{\hil}^*x}{x},\qquad x\in\hil,
\end{align*}
which gives equality \eqref{E:I+TSinverz}.
\end{proof}
\section{Closed operators}
A linear operator $T$ between two Hilbert spaces $\kil$ and $\hil$ is closed if its graph $G(T)$ is a closed linear subspace of the product Hilbert space $\kil\times\hil$. Furthermore, $T$ is said to be closable if the closure $\overline{G(T)}$ is the graph of an operator. A densely defined linear operator $T$ is  closable if and only if its adjoint opreator  $T^*$ is densely defined, i.e.,  $\dom T^*$ is a dense linear subspace of $\hil$. In that case, the second adjoint operator $T^{**}$ of $T$ exists and its graph $G(T^{**})$ is just $\overline{G(T)}$. Densely defined closed operators are therefore   characterized  as being those closable operators $T$ for which the equality $T=T^{**}$ holds true. 

One of the most important results concerning densely defined closed operators is due to von Neumann: if $T$ is densely defined and closed  then both $T^*T$ and $TT^*$ are selfadjoint operators. Recently the authors proved the converse of this statement (\cite[Theorem 2.1]{SZ-TZS:reversed}): if $T^*T$ and $TT^*$ are both selfadjoint operators (provided that $T^*$ exists) then $T$ must be closed. Below we offer an improved version of this result:
\begin{theorem}\label{T:theorem61}
Let $T$ be a densely defined linear operator between the Hilbert spaces $\kil$ and $\hil$. Then the following assertions (i)-(vi) are equivalent:
\begin{enumerate}[\upshape (i)]
\item $T$ is closed.
\item $\ran M_{T^*,T}=\hil\times \kil$.
\item $\ran (I+T^*T)=\kil$ and $\ran (I+TT^*)=\hil$.
\item $T^*T$ and $TT^*$ are selfadjoint operators in the Hilbert spaces $\kil$ and $\hil$, respectively.
\item $G(T^*)^{\perp}\subseteq \ran M_{T^*,T}$.
\item $T^*$ is densely defined and $T^{**}=T$.
\end{enumerate}
\end{theorem}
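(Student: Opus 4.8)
The plan is to derive the whole circle of equivalences from the tools already at our disposal — Theorems \ref{T:theorem22}, \ref{T:theorem31} and \ref{T:theorem55} — applied systematically with the choice $S:=T^*$. The key preliminary remark is that, $T$ being densely defined, the adjoint $T^*$ exists and the adjoining identity $\sip{T^*h}{k}=\sip{h}{Tk}$ (for $h\in\dom T^*$, $k\in\dom T$) says precisely that $T^*\wedge T$. Hence those theorems may be invoked for the pair $(T^*,T)$ with no extra hypothesis, and the clause ``$S\wedge T$'' appearing in each of them is for free.

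The equivalence (i) $\Leftrightarrow$ (vi) is just the characterization recalled at the beginning of this section: a densely defined closed operator is exactly a closable $T$ — equivalently, a $T$ whose adjoint is densely defined — for which $T=T^{**}$. Indeed, when $T^*$ is densely defined one has $G(T^{**})=\overline{G(T)}$, so $T=T^{**}$ forces $G(T)$ to be closed; conversely a densely defined closed operator is closable and satisfies $G(T^{**})=\overline{G(T)}=G(T)$.

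Next I would apply Theorem \ref{T:theorem31} to $(S,T)=(T^*,T)$. Since $S=T^*$, the composites occurring there become $ST=T^*T$ (an operator in $\kil$) and $TS=TT^*$ (an operator in $\hil$); consequently condition (ii) of Theorem \ref{T:theorem31} amounts to our (ii), condition (iii) there amounts to our (iii), and condition (i) there — ``$T^*$ and $T$ densely defined, with $S=T^*$ and $S^*=T$'' — reduces to ``$T^*$ densely defined and $T^{**}=T$'', i.e.\ to our (vi), the clause $S=T^*$ holding by the very choice of $S$. This gives (ii) $\Leftrightarrow$ (iii) $\Leftrightarrow$ (vi). Similarly, Theorem \ref{T:theorem22} applied to $(T^*,T)$ equates ``$G(T^*)^{\perp}\subseteq\ran M_{T^*,T}$'' with ``$T^*$ densely defined and $T^{**}=T$'', that is, (v) $\Leftrightarrow$ (vi).

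It remains to bring in (iv). Here one observes that $T^*T$ and $TT^*$ are always positive symmetric operators: positivity is immediate from $\sip{T^*Tk}{k}=\|Tk\|^2\geq 0$, while symmetry follows from two applications of the adjoining identity. Hence Theorem \ref{T:theorem55}, which does not presuppose a dense domain, applies to each of them and gives that $T^*T$ is selfadjoint if and only if $\ran(I+T^*T)=\kil$, and that $TT^*$ is selfadjoint if and only if $\ran(I+TT^*)=\hil$; together these furnish (iii) $\Leftrightarrow$ (iv), completing the argument. I anticipate no genuine difficulty: the only points demanding care are the bookkeeping in the reduction to Theorem \ref{T:theorem31} — that the composites $ST$, $TS$ are indeed $T^*T$, $TT^*$ acting in the correct spaces, and that the requirement $S=T^*$ in its hypothesis (i) is vacuous under $S:=T^*$ — together with the observation that Theorem \ref{T:theorem55}, as used here for $T^*T$ and $TT^*$, does not require these operators to be densely defined beforehand.
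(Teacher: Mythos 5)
Your proposal is correct and follows essentially the same route as the paper: the paper likewise reduces (ii)$\Leftrightarrow$(iii) to the factorization used in Theorem \ref{T:theorem31}, gets (iii)$\Leftrightarrow$(iv) from Theorem \ref{T:theorem55} after noting that $T^*T$ and $TT^*$ are positive symmetric, and obtains (v)$\Leftrightarrow$(vi) from Theorem \ref{T:theorem22} applied to the pair $(T^*,T)$, closing the circle through the standard fact $G(T^{**})=\overline{G(T)}$. Your only organizational difference is citing Theorem \ref{T:theorem31} wholesale for (ii)$\Leftrightarrow$(iii)$\Leftrightarrow$(vi) rather than re-running its (i)$\Rightarrow$(ii) argument, which is a harmless and slightly cleaner bookkeeping choice.
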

\begin{proof}
The proof of implication (i)$\Rightarrow$(ii) is similar to that of implication (i)$\Rightarrow$(ii) of Theorem \ref{T:theorem31}, so it is left to the reader. Equivalence of (ii) and (iii) is obtained due formula 
\begin{equation*}
M_{T^*,T}\cdot M_{-T^*,-T}={\left(\begin{array}{cc}\!\! I+TT^* &  0\!\!\\ \!\! 0& I+T^*T\!\!\end{array}\right)}=M_{-T^*,-T}\cdot M_{T^*,T}.
\end{equation*}
Since $T^*T$ and $TT^*$ are positive symmetric operators, an immediate application of Theorem \ref{T:theorem55} shows that (iii) and (iv) are equivalent. Implications (ii)$\Rightarrow$(v),  (v)$\Rightarrow$(vi) (observe that $T^*\wedge T$ and use Theorem \ref{T:theorem22}) and (vi)$\Rightarrow$(i) are clear. The proof is therefore complete.
\end{proof}
We remark that the equivalence between (i) and (iv) has been established recently by A. Sandovici \cite{Sandovici} for linear relations. 
\section{Normal, unitary and orthogonal projection operators}
In this   section  we apply our foregoing results in order to gain some characterizations of normal and unitary operators as well as orthogonal projections.

To start with, we provide some formally weaker conditions implying normality of an operator $N$. Recall that a densely defined closed linear operator $N$ in a Hilbert space $\hil$ is called normal if the selfadjoint operators $N^*N$ and $NN^*$ are identical. In view of the characterization Theorem \ref{T:theorem61} of a closed densely defined operator, the definition of normality can be weakened by omitting the ``closedness'' assumption on $N$.  The ensuing theorem  says that it is moreover enough to assume $N^*N$ and $NN^*$ to be adjoint of each other:
\begin{theorem}
Let $N$ be a densely defined linear operator in a Hilbert space $\hil$. The following assertions (i)-(iv) are equivalent: 
\begin{enumerate}[\upshape (i)]
 \item $N$ is normal.
 \item $N^*N$ and $NN^*$ are selfadjoint operators such that $N^*N=NN^*$.
 \item $N^*N$ and $NN^*$ are adjoint of each other, i.e. $(N^*N)^*=NN^*$ and $(NN^*)^*=N^*N$.
 \item $N^*N$ and $NN^*$ are adjoint to each other such that 
 \begin{enumerate}[\upshape a)]
   \item $G(N^*N)^{\perp}\subseteq \ran (M_{N^*N,NN^*}),$
   \item $G(NN^*)^{\perp}\subseteq \ran (M_{NN^*,N^*N})$.
\end{enumerate}
\end{enumerate}
\end{theorem}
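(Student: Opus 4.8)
The plan is to establish the cycle of implications (i)$\Rightarrow$(ii)$\Rightarrow$(iii)$\Rightarrow$(iv)$\Rightarrow$(i), leaning heavily on Theorem \ref{T:theorem61} (the characterization of densely defined closed operators) and on Theorem \ref{T:theorem31} (operators which are adjoint of each other). The implication (i)$\Rightarrow$(ii) is essentially the definition of normality together with von Neumann's classical fact that $N$ closed forces $N^*N$ and $NN^*$ to be selfadjoint; this is immediate once we note that a normal operator is by definition densely defined and closed. The implication (ii)$\Rightarrow$(iii) is trivial: if $N^*N=NN^*$ and both are selfadjoint, then $(N^*N)^*=N^*N=NN^*$ and symmetrically, so they are adjoint of each other. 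The step (iii)$\Rightarrow$(iv) is also formal: if two operators are adjoint of each other then in particular they are adjoint to each other, and the range inclusions a) and b) hold automatically because Theorem \ref{T:theorem31} (the equivalence of (i) and (iv) there, applied with $S=N^*N$, $T=NN^*$) tells us that being adjoint of each other is equivalent to exactly those two range conditions.

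The substantive implication is (iv)$\Rightarrow$(i). Here I would first observe that $N^*N$ and $NN^*$ are always positive symmetric operators and that they are adjoint to each other is part of the hypothesis; then conditions a) and b) are precisely the hypotheses of Theorem \ref{T:theorem31}(iv) with $S=N^*N$ and $T=NN^*$, so that theorem yields $(N^*N)^*=NN^*$ and $(NN^*)^*=N^*N$. Since $N^*N$ is symmetric, $N^*N\subseteq (N^*N)^*=NN^*$, and symmetrically $NN^*\subseteq N^*N$; combining these two inclusions gives $N^*N=NN^*$, and this common operator is selfadjoint. Now I invoke Theorem \ref{T:theorem61}: the statement that $N^*N$ and $NN^*$ are both selfadjoint (condition (iv) of that theorem) is equivalent to $N$ being closed (condition (i)). Since $N$ is densely defined by hypothesis, $N$ is densely defined and closed with $N^*N=NN^*$, which is exactly the definition of normality.

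The main obstacle, such as it is, lies in correctly wiring together Theorems \ref{T:theorem31} and \ref{T:theorem61}: one must check that $N^*N$ and $NN^*$ genuinely are symmetric (hence each is adjoint to itself, so that the hypothesis ``$N^*N$ and $NN^*$ are adjoint to each other'' is consistent and the symmetry inclusions $N^*N\subseteq(N^*N)^*$ can be used), and that the ``adjoint of each other'' conclusion plus symmetry really does collapse the two operators into one selfadjoint operator rather than merely giving mutual inclusions up to closure. Once $N^*N=NN^*$ is selfadjoint, Theorem \ref{T:theorem61} does the rest with no further work. The only mild care needed is that $NN^*$ a priori requires $N^*$ to be densely defined; but this is guaranteed the moment we know $N^*N$ is selfadjoint, or alternatively it can be folded into the hypothesis that $NN^*$ makes sense as an operator adjoint to $N^*N$.
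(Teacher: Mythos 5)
Your proposal is correct and follows essentially the same route as the paper: (i)$\Leftrightarrow$(ii) via Theorem \ref{T:theorem61}, (iii)$\Leftrightarrow$(iv) via Theorem \ref{T:theorem31}, and the key collapse $N^*N\subseteq (N^*N)^*=NN^*\subseteq (NN^*)^*=N^*N$ using the symmetry of $N^*N$ and $NN^*$ — exactly the chain of inclusions the paper uses for (iii)$\Rightarrow$(ii). The only difference is organizational (you run a single cycle of implications instead of the paper's pairwise equivalences), not mathematical.
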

\begin{proof}
Implications (i)$\Leftrightarrow$(ii), (ii)$\Rightarrow$(iii) and (iii)$\Leftrightarrow$(iv) are obvious in view of Theorem \ref{T:theorem61} and Theorem \ref{T:theorem31}. The missing part (iii)$\Rightarrow$(ii) is obtained via the relations
\begin{equation*}
NN^*\subset (NN^*)^*=N^*N\subset (N^*N)^*=NN^*.
\end{equation*}
Here we used the fact that $N^*N$ and $NN^*$ are densely defined symmetric operators.
\end{proof}
We proceed to characterizations of unitary operators:
\begin{theorem}
Let $U$ be a linear operator in the Hilbert space $\hil$ such that $\ker U=\{0\}.$ Then the following assertions are equivalent: 
\begin{enumerate}[\upshape (i)]
\item $U$ is a unitary operator, i.e.,  $U$ is an everywhere defined bounded operator such that $U^*=U^{-1}$.
\item $U\wedge U^{-1}$ and $G(U)^{\perp}\subseteq \ran M_{U,U^{-1}}$.
\item $U$ is densely defined and $U^*=U^{-1}$.
\item $U$ is densely defined, closed with dense range such that $U^{-1}\subset U^*$.
\end{enumerate}
\end{theorem}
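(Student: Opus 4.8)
The plan is to prove the cyclic chain of implications $(i)\Rightarrow(iii)\Rightarrow(iv)\Rightarrow(ii)\Rightarrow(i)$, since each step is short given the machinery already developed. The implication $(i)\Rightarrow(iii)$ is essentially the definition: a unitary operator is everywhere defined (hence densely defined) and satisfies $U^*=U^{-1}$. For $(iii)\Rightarrow(iv)$, note that $U^*=U^{-1}$ is densely defined (its domain is $\ran U$, and $\dom U^*$ is dense because a densely defined operator with densely defined adjoint is closable; more directly, $U^*$ always being closed and equal to $U^{-1}$ forces $U^{-1}$ closed, hence $U$ closed, hence $U$ densely defined with dense range so that $U^{-1}=U^*$ makes sense). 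Then trivially $U^{-1}=U^*\subseteq U^*$, so (iv) holds. The interesting content is in $(iv)\Rightarrow(ii)$ and $(ii)\Rightarrow(i)$.

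For $(iv)\Rightarrow(ii)$: first I would verify $U\wedge U^{-1}$. Since $U^{-1}\subseteq U^*$, for $h\in\dom U$ and $k\in\dom U^{-1}\subseteq\dom U^*$ we have $\sip{Uh}{k}=\sip{h}{U^*k}=\sip{h}{U^{-1}k}$, which is exactly the adjoint relation $U\wedge U^{-1}$. For the range condition, I would invoke Theorem~\ref{T:theorem22}: since $U$ is densely defined and closed, $U=U^{**}$, and the decomposition $G(U)\oplus W\langle G(U^*)\rangle=\hil\times\hil$ holds. I claim $G(U)^{\perp}=W\langle G(U^{-1})\rangle$, which by Theorem~\ref{T:theorem22} will give $G(U)^{\perp}\subseteq\ran M_{U,U^{-1}}$. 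We always have $G(U)^{\perp}=W\langle G(U^*)\rangle$, so the claim reduces to $G(U^*)=G(U^{-1})$, i.e. $U^*=U^{-1}$. The hypothesis only gives $U^{-1}\subseteq U^*$; the reverse inclusion is the point where I must use that $U$ is closed with dense range. Here is the key argument: $U$ closed and densely defined with dense range implies $U^{-1}$ is closed and densely defined, and taking adjoints of $U^{-1}\subseteq U^*$ gives $(U^*)^*\subseteq (U^{-1})^*$, i.e. $U=U^{**}\subseteq (U^{-1})^*=(U^*)^{-1}$ — wait, more carefully: from $U^{-1}\subseteq U^*$ we get by taking adjoints $U^{**}\subseteq (U^{-1})^*$, that is $U\subseteq (U^{-1})^*$; but $(U^{-1})^*=(U^*)^{-1}$ since $U$ is closed, so $U\subseteq (U^*)^{-1}$, whence $U^*\subseteq U^{-1}$ after inverting. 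Combined with the hypothesis this gives $U^*=U^{-1}$, establishing the claim.

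For $(ii)\Rightarrow(i)$: the hypothesis $U\wedge U^{-1}$ together with $\ker U=\{0\}$ and $G(U)^{\perp}\subseteq\ran M_{U,U^{-1}}$ lets me apply Theorem~\ref{T:theorem22} with $S=U$, $T=U^{-1}$: $U$ is densely defined and $U^*=U^{-1}$. Now $U^*$ is always closed, so $U^{-1}$ is closed, hence $U$ is closed. Moreover $U^*=U^{-1}$ means $\dom U^*=\ran U$, and since $U^*$ is densely defined (being an adjoint of a densely defined operator when $U^{**}$ exists — actually here $U^*=U^{-1}$ has dense domain iff $\ran U$ is dense), I would next argue $U$ is everywhere defined and isometric: applying the same theorem to the pair $(U^{-1},U)$ would also need checking, but more efficiently, $U^*=U^{-1}$ yields $U^*U=U^{-1}U=I_{\dom U}$ and $UU^*=UU^{-1}=I_{\ran U}$, so $U$ is isometric on its domain with isometric inverse on $\ran U$; then $U^*U=I$ on $\dom U$ forces $U$ bounded, hence closed-and-bounded means $\dom U$ is closed, and density of $\dom U$ then gives $\dom U=\hil$; similarly $\ran U=\hil$. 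Therefore $U$ is an everywhere defined bounded operator with $U^*=U^{-1}$, i.e. unitary. The main obstacle is the inclusion-reversal step in $(iv)\Rightarrow(ii)$, where one must leverage closedness and dense range to upgrade $U^{-1}\subseteq U^*$ to equality via the adjoint-of-inverse identity $(U^{-1})^*=(U^*)^{-1}$; once that is in hand, everything else is a direct appeal to Theorem~\ref{T:theorem22} and elementary manipulation.
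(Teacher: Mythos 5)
The weak link is your step $(iv)\Rightarrow(ii)$, and the error is concrete: after arriving at $U\subseteq (U^*)^{-1}$, you assert "whence $U^*\subseteq U^{-1}$ after inverting." Inverting an inclusion of graphs preserves it, so $U\subseteq (U^*)^{-1}$ inverts to $U^{-1}\subseteq \bigl((U^*)^{-1}\bigr)^{-1}=U^*$, which is exactly the hypothesis you started from. The whole chain (take adjoints, use $U^{**}=U$, use $(U^{-1})^*=(U^*)^{-1}$, invert) is a closed loop that returns $U^{-1}\subseteq U^*$; it cannot manufacture the reverse inclusion $U^*\subseteq U^{-1}$, i.e.\ the inclusion $\dom U^*\subseteq \ran U$, by formal manipulation alone. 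So as written, $(iv)\Rightarrow(ii)$ is not proved and the cycle is broken.

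The implication is nevertheless true, and you already have the right idea buried in your $(ii)\Rightarrow(i)$ step: $U^{-1}\subseteq U^*$ means $U^*Ux=x$ for all $x\in\dom U$, hence $\|Ux\|^2=\sip{U^*Ux}{x}=\|x\|^2$, so $U$ is an isometry on a dense domain; being closed and bounded it is everywhere defined, its range is closed, and dense range then gives $\ran U=\hil$, whence $U^{-1}=U^*$ everywhere and $(ii)$ follows from Theorem \ref{T:theorem22}. The paper takes a different and slicker route for the nontrivial implication: it proves $(i)\Rightarrow(ii)\Rightarrow(iii)\Rightarrow(iv)$ directly from Theorem \ref{T:theorem22} and then closes the cycle with $(iv)\Rightarrow(i)$, using the decomposition $G(U)\oplus W\langle G(U^*)\rangle=\hil\times\hil$ to get $\hil=\ran U+\dom U^*$; since $U^{-1}\subseteq U^*$ gives $\ran U\subseteq\dom U^*$, this forces $\dom U^*=\hil$, and the closed graph theorem finishes. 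Either repair works; your argument as submitted does not.
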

\begin{proof}
 From Theorem \ref{T:theorem22} we have (i)$\Rightarrow$(ii)$\Rightarrow$(iii)$\Rightarrow$(iv), so our only duty is to prove that (iv) implies (i). $U^{-1}$ exists as a densely defined closed operator, furthermore we have $\ran U\subseteq \dom U^*$, whence 
\begin{align}
\hil=\dom U^*+\ran U\subseteq \dom U^{*}.
\end{align}
This means that  $U^*$ is everywhere defined and continuous in account of the closed graph theorem. Consequently, $U$ is continuous too and $U^*=U^{-1}$. The proof is therefore complete.
\end{proof}
We close the paper by characterizing orthogonal projections (cf. also  \cite[Corollary 3.7]{Popovici}): 
\begin{theorem}
Let $P$ be a symmetric linear operator in a Hilbert space $\hil$. The following assertions (i)-(iii) are equivalent:
\begin{enumerate}[\upshape (i)]
 \item $P$ is an orthogonal projection, i.e.,  $P$  is an everywhere defined bounded operator such that $P=P^*=P^2$.
 \item $G(P)^{\perp}\subseteq \ran M_{P,P}\cap\ran M_{P,P^2}$.
 \item $P$ is selfadjoint and $P^2\subset P$.
\end{enumerate}
\end{theorem}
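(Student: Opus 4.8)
The plan is to prove the cycle (i)$\Rightarrow$(ii)$\Rightarrow$(iii)$\Rightarrow$(i), using Theorem \ref{T:theorem22} twice — once with $S=T=P$ and once with $S=P$, $T=P^2$ — together with elementary manipulations of the symmetry hypothesis $P\wedge P$.

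First, for (i)$\Rightarrow$(ii): if $P$ is an orthogonal projection then $P=P^*$ and $P=P^*=P^2$, so in particular $P=P^*$ and $P=(P^2)^*$. Applying Theorem \ref{T:theorem22} with $S=T=P$ gives $G(P)^{\perp}=W\langle G(P)\rangle\subseteq\ran M_{P,P}$; applying it with $S=P$, $T=P^2$ (valid since $P\wedge P^2$ because $\sip{Px}{y}=\sip{x}{Py}=\sip{x}{P^2y}$ whenever $y\in\dom P^2$, using that $P$ is everywhere defined) gives $G(P)^{\perp}=W\langle G(P^2)\rangle\subseteq\ran M_{P,P^2}$. Intersecting the two inclusions yields (ii). The only thing to check carefully is that $P\wedge P$ and the everywhere-definedness of $P$ really do give $P\wedge P^2$, which is the short computation just written.

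Next, for (ii)$\Rightarrow$(iii): since $P$ is symmetric we have $P\wedge P$, so $G(P)^{\perp}\subseteq\ran M_{P,P}$ together with Theorem \ref{T:theorem22}(iii)$\Rightarrow$(i) forces $P$ to be densely defined and selfadjoint, $P^*=P$. For the inclusion $P^2\subset P$: the relation $P\wedge P^2$ holds (now using that $P$ is selfadjoint, hence $\dom P=\hil$ is unnecessary — rather one uses $\sip{Px}{y}=\sip{x}{Py}=\sip{x}{P^2y}$ for $x\in\dom P$, $y\in\dom P^2\subseteq\dom P$), so from $G(P)^{\perp}\subseteq\ran M_{P,P^2}$ and Theorem \ref{T:theorem22} again we get that $P$ is densely defined with $P^*=P^2$. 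Combining $P^*=P$ and $P^*=P^2$ we obtain $P=P^2$, which certainly implies $P^2\subset P$, and in fact $P^2=P$ as operators. (If one prefers to extract only $P^2\subset P$ that is immediate from $P^2=P$; the stronger equality costs nothing.)

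Finally, for (iii)$\Rightarrow$(i): assume $P$ is selfadjoint and $P^2\subset P$. Then $P$ is in particular a selfadjoint, hence closed, operator satisfying $\dom P^2\supseteq$? — here is the crux. From $P^2\subset P$ we get, for every $x\in\dom P^2$, that $Px\in\dom P$ and $P(Px)=Px$, so $Px$ is a fixed point of $P$; but I want to conclude $\dom P=\hil$. The key step — which I expect to be the main obstacle — is to show $\dom P$ is all of $\hil$, equivalently that the selfadjoint operator $P$ is bounded. One clean route: $P^2\subset P$ means $P^2$ is a restriction of the selfadjoint operator $P$; but for a densely defined $P$ the operator $P^2$ with its natural domain $\dom P^2=\{x\in\dom P: Px\in\dom P\}$ satisfies, when $P$ is selfadjoint, $(P^2)^*=P^2$ is selfadjoint (von Neumann), hence $P^2$ is itself selfadjoint and a selfadjoint operator has no proper selfadjoint extension, so $P^2=P$, giving $\dom P=\dom P^2$. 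Now $P=P^2$ with $P$ selfadjoint: for $x\in\dom P$, $\|Px\|^2=\sip{P^2x}{x}=\sip{Px}{x}\le\|Px\|\,\|x\|$, so $\|Px\|\le\|x\|$; thus $P$ is a bounded symmetric operator. A bounded symmetric (hence, being selfadjoint, closed) operator with $\dom P$ dense and closed graph and the bound just obtained extends to a bounded everywhere-defined operator, and selfadjointness forces $\dom P=\hil$. Then $P=P^*=P^2$ with $P$ everywhere defined and bounded, i.e. $P$ is an orthogonal projection. I would organize the write-up so that the delicate point — promoting $P^2\subset P$ to $P^2=P$ via selfadjointness and "no proper selfadjoint extension" — is stated explicitly, as that is where all the content sits; everything else is bookkeeping with Theorem \ref{T:theorem22}.
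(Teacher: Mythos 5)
Your implication (iii)$\Rightarrow$(i) is correct and essentially the paper's argument: both deduce $P^2=P$ from the fact that $P^2=PP^*$ is selfadjoint (von Neumann) and a selfadjoint operator admits no proper selfadjoint extension; for boundedness you use $\|Px\|^2=\sip{x}{P^2x}=\sip{x}{Px}\leq\|x\|\,\|Px\|$, whereas the paper gets $\dom P=\hil$ from $\hil=\dom P+\ran P^*\subseteq\dom P$ and the closed graph theorem — both routes work. The implication (i)$\Rightarrow$(ii) is also fine, though the reason $P\wedge P^2$ holds there is the identity $P=P^2$ itself, not the everywhere-definedness of $P$.

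The genuine gap is in (ii)$\Rightarrow$(iii). To apply Theorem \ref{T:theorem22} with $S=P$, $T=P^2$ you must first have $P\wedge P^2$, and your justification is the chain $\sip{Px}{y}=\sip{x}{Py}=\sip{x}{P^2y}$. The first equality is symmetry, but the second, holding for all $x$ in the dense set $\dom P$, is equivalent to $Py=P^2y$ for every $y\in\dom P^2$ — i.e.\ to the very conclusion $P^2\subset P$ you are after. The step is circular, and $P\wedge P^2$ genuinely fails for general selfadjoint $P$: for $P=2I$ one has $\sip{Px}{y}=2\sip{x}{y}$ while $\sip{x}{P^2y}=4\sip{x}{y}$. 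Worse, $P=2I$ even satisfies the range inclusions in (ii), since both $M_{P,P}$ and $M_{P,P^2}$ are then invertible scalar operator matrices with full range; so the adjointness $P\wedge P^2$ cannot be extracted from the range condition and has to be carried as a hypothesis. This is in fact how the paper's one-line proof of (ii)$\Leftrightarrow$(iii) should be read: condition (iii) of Theorem \ref{T:theorem22} bundles $S\wedge T$ together with $G(S)^{\perp}\subseteq\ran M_{S,T}$, so the appeal to that theorem tacitly assumes $P\wedge P^2$ alongside the stated inclusion. You should make that assumption explicit in (ii); with it, your argument goes through, and without it the implication (ii)$\Rightarrow$(iii) cannot be established by any argument, as the example $P=2I$ shows.
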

\begin{proof}
The equivalence between (ii) and (iii) is clear by Theorem \ref{T:theorem22}. Furthermore, (i) obviously implies (iii). Finally, if we assume (iii) then we infer that $P^2=PP^*=P$ because a selfadjoint operator has no proper selfadjoint extension. It remains to prove  that $P$ is continuous. Since have $\ran M_{P, P^*}=\hil\times\hil$ by selfadjointness, and also $\dom P^2=\dom P$, it follows that 
\begin{equation*}
\hil =\dom P+\ran P^*=\dom P+\ran P\subseteq \dom P,
\end{equation*}
 i.e.,  $\dom P=\hil$. By the closed graph we conclude that $P$ is bounded.
\end{proof}

\bibliographystyle{abbrv}

\begin{thebibliography}{10}


\bibitem{Behrndt}
J. Behrndt and M. Langer,
\newblock On the adjoint of a symmetric operator,
\newblock {\em J. of London Math. Soc.}, \textbf{82} (2010), 563--580.


\bibitem{Birman}
M. S. Birman and M. Z. Solomiak,
\newblock {\em Spectral theory of self-adjoint operators in
Hilbert space,}
\newblock D. Reidel Publ. Company, Dordrecht, Holland, 1987.

\bibitem{Gesztesy}
F. Gesztesy, J. A. Goldstein, H. Holden, and G. Teschl,
\newblock Abstract wave equations and associated Dirac-type operators,
\newblock {\em Annali di Matematica Pura ed Applicata}, \textbf{191} (2012), 631--676.

\bibitem{Halmos}
P. R. Halmos, \emph{A Hilbert space problem book}, Springer Verlag, New York, 1982.

\bibitem{KATO}
T.~Kato,
\newblock {\em Perturbation theory for linear operators},
\newblock Springer-Verlag, Berlin, second edition, 1976.
\newblock Grundlehren der Mathematischen Wissenschaften, Band 132.




\bibitem{vonNeumann1930}
J.~v.~Neumann,
\newblock Allgemeine Eigenwerttheorie hermitescher Funktionaloperatoren,
\newblock {\em Mathematische Annalen}, \textbf{102} (1930), 49--131.

\bibitem{vonNeumann}
J.~v. Neumann,
\newblock \"Uber adjungierte Funktionaloperatoren,
\newblock {\em Annals of Mathematics}, \textbf{33} (1932), 294--310.


\bibitem{Popovici}
D. Popovici and Z. Sebesty\'en,
\newblock On operators which are adjoint to each other,
\newblock {\em Acta Sci. Math. (Szeged)}, \textbf{80} (2014), 175--194.

\bibitem{PopSZTZS}
D. Popovici, Z. Sebesty\'en and Zs. Tarcsay,
\newblock On the sum between a closable operator T and
a $T$-bounded operator, \emph{Annales Univ. Sci. Budapest. Sect. Math.}, \textbf{58} (2015), 95--104.

\bibitem{Riesz}
F.~Riesz and B.~Sz.-Nagy,
\emph{Functional Analysis},
\newblock Ungar, New York, 1955.

\bibitem{Sandovici}
A. Sandovici, Von Neumann's theorem for linear relations, \emph{Linear and Multilinear Algebra}, 2017, \emph{online first}; DOI: 10.1080/03081087.2017.1369930
\bibitem{Sebestyen83a}
Z.~Sebesty\'en,
\newblock {On ranges of adjoint operators in Hilbert space,}
\newblock {\em Acta Sci. Math. (Szeged)}, \textbf{46} (1983), 295--298.

\bibitem{TadjointT}
Z. Sebesty\'en and Zs. Tarcsay, \newblock $T^{*}T$ always has a positive
selfadjoint extension,
\newblock {\em Acta Math. Hungar.}, \textbf{135} (2012), 116--129.

\bibitem{Characterization}
Z. Sebesty\'en and Zs. Tarcsay,
\newblock Characterizations of selfadjoint operators, \emph{Studia Sci. Math. Hungar.} \textbf{50} (2013), 423--435.

\bibitem{SZ-TZS:reversed}
Z. Sebesty\'en and Zs. Tarcsay, A reversed von Neumann theorem, \emph{Acta Sci. Math. (Szeged)}, \textbf{80} (2014), 659--664.


\bibitem{Charess}
Z. Sebesty\'en and Zs. Tarcsay,
\newblock Characterizations of essentially selfadjoint  and skew-adjoint operators, \emph{Studia Sci. Math. Hungar.},  \textbf{52} (2015), 371--385.

\bibitem{Squareselfadj}
Z. Sebesty\'en and Zs. Tarcsay,
\newblock  Operators having selfadjoint squares, 
\newblock {\em Annales Univ. Sci. Budapest. Sect. Math.}, \textbf{58} (2015), 105--110.

\bibitem{SZTZS2015}
Z. Sebesty\'en and Zs. Tarcsay, Adjoint of sums and products of operators in Hilbert spaces, \emph{Acta Sci. Math. (Szeged)}, \textbf{82} (2016), 175--191.


\bibitem{Stone}
M. H. Stone, \emph{Linear Transformations in Hilbert Spaces and their Applications to
Analysis}, Amer. Math. Soc. Colloq. Publ. \textbf{15}, Amer. Math. Soc., 1932.

\bibitem{TZS:closedrange}
Zs. Tarcsay, Operator extensions with closed range, \emph{Acta Math. Hungar.}, \textbf{135} (2012), 325--341.

\bibitem{Weidmann}
J.~Weidmann,
\newblock {\em Lineare  Operatoren in Hilbertr\"aumen. Teil I.:  Grundlagen},
\newblock Mathematische Leitf\"aden. Wiesbaden: B. G. Teubner, 2000.

\end{thebibliography}

\end{document}